\newcommand{\mC}{{\mathbb C}}
\newcommand{\mP}{\mathbb P}
\newcommand{\mZ}{{\mathbb Z}}
\newcommand{\bO}{\Omega}
\newcommand{\bo}{\omega}
\newcommand{\kk}{\kappa}
\newcommand{\mcC}{\mathcal C}
\newcommand{\mcH}{\mathcal H}
\newcommand{\mcO}{\mathcal O}
\newcommand{\mcS}{\mathcal S}
\newcommand{\mcT}{\mathcal T}
\newcommand{\mcU}{\mathcal U}
\newcommand{\mcZ}{\mathcal Z}
\newcommand{\quickop}[1]{%
\expandafter\DeclareMathOperator\csname #1\endcsname{#1}}
\newcommand{\dorbit}{{\mathcal{O}}}
\newcommand{\dborbit}{{\mathcal{O}}}
\newcommand{\pmat}[4]{%
        \left(\begin{smallmatrix}
        \vphantom{P^2}#1 & \vphantom{P^2}#2 \\
        \vphantom{P^2}#3 & \vphantom{P^2}#4
        \end{smallmatrix}\right)}
\newcommand{\inv}{^{-1}}
\newcommand{\bb}{{\mathfrak{b}}}
\renewcommand{\gg}{{\mathfrak{g}}}
\newcommand{\ti}{\tilde}
\newcommand{\wt}{\widetilde}
\newcommand{\ov}{\overline}
\newcommand{\wh}{\widehat}
\theoremstyle{plain}
\newtheorem{thm}{Theorem}
\newtheorem{cl}[thm]{Claim}
\newtheorem{cor}[thm]{Corollary}
\newtheorem{conj}[thm]{Conjecture}
\newtheorem{lem}[thm]{Lemma}
\newtheorem{lem*}[thm]{Lemma}
\newtheorem{prop}[thm]{Proposition}
\theoremstyle{definition}
\newtheorem{dfn}{Definition}
\theoremstyle{remark}
\newtheorem{rem}{Remark}
\newtheorem{rem*}{Remark}
\numberwithin{rem}{section} 
\numberwithin{dfn}{section} 
\numberwithin{equation}{section} 
\numberwithin{thm}{section} 
\def\ov{\overline}
\def\smatrix{\smallmatrix}
\def\pmatrix{\left(\smatrix}
\def\endpmatrix{\endsmallmatrix\right)}
\def\!{\operatorname{!}}
\def\wh{\widehat}
\def\C{\mathbb C}
\def\R{\mathbb R}
\def\Z{\mathbb Z}
\def\AA{\mathcal A}
\def\BB{\mathcal B}
\def\NN{\mathcal N}
\def\1{\mathbf 1}
\def\val{\operatorname{val}}
\def\GL{\operatorname{GL}}
\def\PGL{\operatorname{PGL}}
\def\Id{\operatorname{Id}}
\def\SL{\operatorname{SL}}
\def\Hom{\operatorname{Hom}}
\def\chr{\operatorname{char}}
\def\St{\operatorname{St}}
\def\End{\operatorname{End}}
\def\ch{\operatorname{ch}}
\def\tr{\operatorname{tr}}
\def\cusp{\operatorname{cusp}}
\def\ind{\operatorname{ind}}
\def\supp{\operatorname{supp}}
\def\Tsp{A}
\begin{document}

\title[A spectral decomposition of Orbital Integrals]{A spectral decomposition of orbital integrals for $PGL(2,F)$ (with an Appendix by S. Debacker)}

\author{David Kazhdan}

\begin{abstract}
Let $F$ be a local non-archimedian field, $G$ a semisimple $F$-group, $dg$ a Haar measure on $G$ and $\mcS (G)$ be
the space of locally constant complex valued functions $f$ on $G$ with compact support. For any regular elliptic congugacy class $\bO =h^G\subset G$ we denote by $I_\bO$ the $G$-invariant functional on $\mcS (G)$ given by
$$I_\bO (f)=\int _G f(g^{-1}hg)dg$$
This paper provides  the spectral decomposition of functionals
$I_\bO$ in the case  $G=\PGL(2,F)$ and in the last section first steps of such 
an analysis for the  general case.
\end{abstract}
\date{\today}

\maketitle
{\bf Dedicated to A. Beilinson on the occasion of his 60th birthday.}

\medskip

{\it Acknowledgments}. Many thanks for J.Bernstein, S. Debacker and Y. Flicker
who corrected a number of imprecisions in the original draft and S. Debacker for writing an Appendix.

I am partially supported by the ERC grant 669655-HAS.
\section{Introduction} 
Let $F$ be a local non-Archimedean field, $\mcO$ be the ring of integers of $F$, $\cal P\subset\mcO$ the maximal ideal, $k=\mcO /\cal P$ the residue field, $\varpi$ a generator of $\cal P$, $q=|k|$, $\val:F^\times\to\mZ$ the valuation such that $\val(\varpi)=1$ and $\| a\| =q^{-\val (a)}$, $a\in F^\times$, the normalized absolute value. For any analytic $F$-variety $Y$ we
denote by $\mcS (Y)$ the space of locally constant complex-valued functions on $Y$ with compact support and by $\mcS ^\vee (Y)$ the space of {\it distributions} on $Y$.

 Let $G$ be a group of $F$-points of a reductive group over $F, Z$ be the center of $G$, $dz$ a Haar measure on $Z$ and $dg$ a Haar measure on $G$. We denote by $\mcH (G)$ the space of compactly supported measures on $G$ invariant under shifts by some open subgroup. The map $\mapsto fdg$ defines an isomorphisms between the spaces
 $\mcS (G)$ and  $\mcH (G)$.  

We denote  by $\hat G_{cusp}\subset \hat G_2\subset \hat G_t\subset \hat G$ the subsets of 
cuspidal, square-integrable and tempered representations. For any 
$\pi \in \hat G_2$ there exists a notion of the {\it formal degree} $d(\pi ,dg)$
of $\pi$ which depends on a choice of a Haar measure $dg$. We chose this measure in such a way that the formal degree of the {\it Steinberg representation} is equal to $1$ and write
$d(\pi)$ instead of $d(\pi ,dg)$. (See Section $3$ for definitions in the case $G=\PGL(2,F))$.

Given  a regular elliptic conjugacy class $\Omega \subset G$
we denote by $I_\bO$ the functional 
on the space $\mcH (G)$ given by 
 $fdg\mapsto \int _{G/Z}f(ghg^{-1})dg/dz, h\in \bO$
 where $dg/dz$ the invariant measure on  $G/Z$ corresponding to our choice of measures $dg$ and $dz$. 

\begin{rem} The functional $I_\bO$ does not depend on a choice 
of a Haar measure $dg$. In particular these functionals are canonically defined in the case when $G$ is semisimple.
\end{rem}
 We denote by $\hat G$ the set of equivalent classes of smooth irreducible complex representations. For any $\pi \in \hat G$ we denote by $\chi _\pi$ the {\it character} of $\pi$ which the  functional on $\mcH (G)$ given by $\chi _\pi (\mu )=tr (\pi (\mu))$ where 
$\pi (\mu )=\int _G\pi (g)\mu$.

\begin{conj} For any regular elliptic conjugacy class $\Omega \subset G$
there exists  unique  measure $\mu _\bO $ on the subset $\hat G_t\subset \hat G$ 
of tempered representations such that
$$I_\bO  =\int _{\pi \in \hat G}\chi _\pi \mu _\bO.$$
\end{conj}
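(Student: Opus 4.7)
My approach is to construct $\mu_\Omega$ explicitly using Harish-Chandra's Plancherel decomposition of $\hat G_t$ for $G=\PGL(2,F)$, exploiting the vanishing of principal-series characters on regular elliptic elements. Recall that $\hat G_t$ splits as a disjoint union $\hat G_2 \sqcup \hat G_{ps}$, where $\hat G_2$ consists of the Steinberg and supercuspidal representations (a countable discrete set), while $\hat G_{ps}$ is a one-parameter family of unitarily induced principal series $\pi_\chi=\mathrm{Ind}_B^G\chi$. The key observation is that by the Harish-Chandra formula for induced characters, $\chi_{\pi_\chi}(h)=0$ for every $h\in\Omega$: a regular elliptic $h$ lies in a nonsplit torus, hence is not $G$-conjugate to any element of the split Levi $T$ of $B$, and the character formula integrates over the (empty) set of such conjugates. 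Consequently any candidate $\mu_\Omega$ must be supported on the discrete set $\hat G_2$.

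I would then define the candidate
$$\mu_\Omega := \sum_{\pi\in\hat G_2} d(\pi)\,\chi_\pi(h)\,\delta_\pi,$$
a purely atomic measure on $\hat G_t$. To verify the identity $I_\Omega(f)=\int\chi_\pi(f)\,d\mu_\Omega(\pi)$, I would first test on matrix coefficients $\phi_{v,w}(g)=\langle\pi_0(g)v,w\rangle$ of $\pi_0\in\hat G_2$. Schur orthogonality for square-integrable representations modulo the center yields
$$I_\Omega(\phi_{v,w}) \;=\; d(\pi_0)^{-1}\,\langle v,w\rangle\,\chi_{\pi_0}(h),$$
while on the spectral side $\chi_\pi(\phi_{v,w})=0$ for $\pi\not\cong\pi_0$ and $\chi_{\pi_0}(\phi_{v,w})=\langle v,w\rangle$ under compatible normalizations, so both sides agree. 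For a general $f\in\mcH(G)$, I would spectrally decompose $f=f_{\mathrm{disc}}+f_{\mathrm{cts}}$, with $f_{\mathrm{disc}}$ a finite sum of matrix coefficients of discrete series (finite by bi-invariance of $f$ under a suitable congruence subgroup and admissibility) and $\chi_\pi(f_{\mathrm{cts}})=0$ for all $\pi\in\hat G_2$. Plancherel inversion combined with the vanishing $\chi_{\pi_\chi}(h)=0$ then forces $I_\Omega(f_{\mathrm{cts}})=0$, completing the verification.

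For uniqueness, suppose $\nu$ is another measure on $\hat G_t$ with $\int\chi_\pi\,d\nu=I_\Omega$. On the discrete part, the existence of pseudocoefficients picking out a single discrete-series character determines the mass at each atom, forcing the discrete part of $\nu$ to agree with $\mu_\Omega$. To rule out a phantom continuous contribution on $\hat G_{ps}$, I would invoke the Bernstein decomposition of $\mcH(G)$ into blocks together with the Paley-Wiener theorem for the tempered dual, which together provide enough test functions localized on $\hat G_{ps}$ to force any continuous part of $\nu$ to vanish.

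The main obstacle will be precisely this last step of uniqueness on $\hat G_{ps}$. Existence is essentially Schur orthogonality together with the vanishing of induced characters on elliptic elements, but eliminating a hidden continuous piece of $\nu$ requires a separating family on the non-discrete tempered dual, which is a statement about the fine structure of the Bernstein components rather than about orbital integrals as such. A secondary concern is to pin down the precise scalar normalizations so as to be compatible with the convention $d(\mathrm{St})=1$ fixed in the introduction, in particular getting the factor of $d(\pi)$ correct for every supercuspidal and for the Steinberg simultaneously.
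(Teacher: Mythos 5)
The step in which you conclude that ``any candidate $\mu_\Omega$ must be supported on the discrete set $\hat G_2$'' is a genuine and fatal gap. The vanishing of the induced characters $\chi_{\pi_\theta}$ on regular elliptic elements (Proposition \ref{4.1}~$f)$ in the paper) tells you only that the \emph{pointwise} value $\chi_{\pi_\theta}(h)$ is zero; it does not tell you that the tempered principal series carries no mass in the spectral decomposition of the distribution $I_\Omega$. The measure $\mu_\Omega$ is not given by an a priori inversion formula of the shape $I_\Omega(f)=\sum_\pi \chi_\pi(f)\,\overline{\chi_\pi(h)}\,d(\pi)$; producing any such inversion is exactly what has to be proved, and on the continuous spectrum the coefficients turn out to be governed by the Shalika germs $c_e(t)$, $c_\mcU(t)$ rather than by $\chi_{\pi_\theta}(h)$. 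Concretely, the paper's own results refute your claim: by Corollary \ref{9.4}, for $f=fdg\in\mcH_x$ with $d(x)\geq 1$ (so that every irreducible subquotient relevant to $f$ is an irreducible tempered principal series) one has $I_t(f)=c_e(t)\delta(f)+c_\mcU(t)\bo(f)$, which is generically nonzero; combined with the Plancherel formula (Proposition \ref{8.1}~$c)$) and Lemma \ref{8.3}, this forces $\mu_\Omega$ to have a nontrivial absolutely continuous part on the unitary principal series. Your assertion that ``Plancherel inversion combined with the vanishing $\chi_{\pi_\chi}(h)=0$ forces $I_\Omega(f_{\mathrm{cts}})=0$'' is precisely where the argument breaks: Plancherel inversion spectrally decomposes $\delta$, not $I_\Omega$, and the two distributions have very different spectral transforms.

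The purely discrete-series part of your computation (Schur orthogonality applied to matrix coefficients of square-integrable representations, recovering the atom $d(\pi)\chi_\pi(h)\delta_\pi$ at each cuspidal $\pi$) is consistent with Proposition \ref{9.4a} and with the cuspidal term of Theorem \ref{1.1}, and the uniqueness discussion via the trace Paley--Wiener theorem is reasonable in outline. But note also that the statement you are proving is stated in the paper as a \emph{conjecture} for a general reductive $G$; the paper establishes it only for $G=\PGL(2,F)$, and does so by an entirely different mechanism: the Bernstein decomposition $(\star)$, the identification of $I_t$ restricted to each component $\mcS(G)_x$ with a combination of $\delta_x$ and $\bo_x$ via germ expansions and the results of \cite{Ka} (Claims \ref{9.2}, \ref{9.3}), and then the explicit spectral description of $\delta_x$ and $\bo_x$ from the Plancherel formula. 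The continuous part of $\mu_\Omega$ that your argument erases is exactly the content of the terms $c_e(t)\delta_{d(\bO)}+c_\mcU(t)\bo_{d(\bO)}$ in Theorem \ref{1.1}.
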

We say that the measure $\mu _\bO$ gives the {\it spectral description} of the functional $I_\bO$. If $t\in G$ is a regular ellipltic element we will write $I_t:=I_\bO ,\bO =t^G$.

The main goal of this paper is to find the spectral description of the functionals $I_\bO$ in the case $G=\PGL(2,F)$. When the residual characteristic of $F$ is odd  such a description (based on the knowledge of formuals for characters $\chi _\pi$) was given in \cite{ss84}.

We discuss the general of case of  a  general reductive group   in the last Section  $10$, but until  Section $10$ we assume that $G=\PGL(2,F)$.

Let $B\subset G=\PGL (2,F)$ be the subgroup of upper triangular matricies. Then $B=\Tsp U$  where $\Tsp \subset B$ is the subgroup of diagonal and $U\subset B$ of unipotent matrices. We denote by $\Tsp (\mcO )$ the maximal compact subgroup of $\Tsp$ and by $X$ be the group of characters of $\Tsp (\mcO ).$ For any $x\in X$ we define in Section $2$ the notion of  {\it depth} $d(x)\in \mZ _+$  of $x$.

 We denote by $\mcU \subset G$ the subset of non-trivial unipotent elements and by $\nu$ a $G$-invariant measure on $\mcU$.

 For any $x\in X$ we denote by $\rho _x$ 
the representation of $G$ induced from the character $x$ of 
$\Tsp (\mcO)U\subset B$ and define  $\hat G_x\subset \hat G$ as the subset 
of irreducible representations of $G$ which appear 
as subquotients of $\rho _x$. Then
 $\hat G_x=\hat G_{i(x)}, i(x)=x^{-1}$ and we have a decomposition of $\hat G$ in the disjoint union
$$\hat G=\cup _{x\in X/i}\hat G _x\cup \hat G _{cusp}$$
where $\hat G_{cusp}\subset \hat G$ is the subset of cuspidal representations.
This decomposition induces a direct sum decomposition 
$$(\star ) {}\mcS (G)=\oplus _{x\in X/i}\mcS (G)_x\oplus \mcS (G)_{cusp}$$
and the analogous direct sum decomposition of the space $\mcS ^\vee (G)$ of distributions.

Let $\delta ,\bo$ be the distributions on $G$ given by 
$$\delta (f)=f(e), \bo (f)=\int _\mcU f(u)\nu$$ 
We denote by  $\delta _x, \bo _x\in \mcS ^\vee (G)$ the components of 
$\delta$ and $\bo$ in the decomposition $(\star )$ and for $r\geq 0$ define 
$$\delta _r=\sum _{x\in X/i|d(x)\leq r}\delta _x$$
and 
$$\bo _r=\sum _{x\in X/i|d(x)\leq r}\bo _x.$$

In  Section $2$ we define the {\it discriminant} $d(\bO) \in \mZ _+$ of  a regular  elliptic conjugacy class $\bO \subset G$.

\begin{thm}\label{1.1} 
For any elliptic torus $T\subset G$
there exist functions $c_e(t),c_\mcU (t)$ on $T$ 
such that any regular elliptic conjugacy class $\Omega =t^G\subset G, t\in T$ we have an equality 

$$I_t =\sum _{\pi \in \hat G_{cusp}} d(\pi )\chi _\pi (t)+
c_e(t)\delta _{d(\bO )}+c_\mcU (t)\bo _{d(\bO )}$$
of distributions.
\end{thm}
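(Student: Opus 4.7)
The plan is to exploit the Bernstein decomposition $(\star)$ to split
$$I_t = I_t^{\cusp} + \sum_{x\in X/i} I_t^{x},$$
where $I_t^{\bullet}$ denotes the projection of the distribution $I_t$ onto the dual of the corresponding summand of $\mcS(G)$, and then to identify each piece separately.

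First I would identify the cuspidal contribution. Pair $I_t^{\cusp}$ with a matrix coefficient $f_\pi$ of a cuspidal $\pi$ and use Harish--Chandra orthogonality for the discrete spectrum. The chosen normalization ($d(\St)=1$) yields $I_t(f_\pi) = d(\pi)\,\chi_\pi(t)\,\tr(\pi(f_\pi))$ for such $f_\pi$, whence $I_t^{\cusp} = \sum_{\pi\in\hat G_{\cusp}} d(\pi)\,\chi_\pi(t)\,\chi_\pi$ as a distribution. Local finiteness of the sum on $\mcS(G)$ follows from the bounded bi-invariance of any test function, which constrains the depth of cuspidal types that can contribute.

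Next I would analyze the principal series projections $I_t^x$. The Shalika germ expansion for $\PGL(2,F)$ produces two functions $c_e(t),c_\mcU(t)$ on $T$ and a regular semisimple remainder $I_t^{\mathrm{reg}}$ such that
$$I_t = c_e(t)\,\delta + c_\mcU(t)\,\bo + I_t^{\mathrm{reg}}$$
holds on the space of test functions supported in a neighborhood of $\{e\}\cup\mcU$ whose size depends only on $d(\bO)$. I would argue that for $x$ with $d(x)\le d(\bO)$ the Bernstein projector onto $\mcS(G)_x$ preserves this neighborhood structure, so projecting the displayed identity yields $I_t^x = c_e(t)\,\delta_x + c_\mcU(t)\,\bo_x$. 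For $x$ with $d(x)>d(\bO)$ the target is $I_t^x=0$; the point is that every irreducible subquotient of $\rho_x$ has character vanishing at any regular elliptic $t$ with $d(t^G)<d(x)$, because the support of its character on the elliptic set is bounded by the depth of $x$. Plancherel on the $x$-block then forces $I_t^x=0$. Summing the low-depth contributions over $\{x\in X/i : d(x)\le d(\bO)\}$ produces $c_e(t)\,\delta_{d(\bO)} + c_\mcU(t)\,\bo_{d(\bO)}$, which combined with the cuspidal term gives the claimed formula.

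The \textbf{main obstacle} is making the two principal series claims precise, i.e.\ pinning down the exact matching between the depth filtration on characters of $\Tsp(\mcO)$ and the discriminant stratification of regular elliptic classes, including the cancellation of $I_t^{\mathrm{reg}}$ on every low-depth block. When the residual characteristic is odd this is essentially contained in the explicit character tables of \cite{ss84}; for arbitrary residual characteristic one needs an intrinsic argument, amounting to a Moy--Prasad / Iwahori--Hecke analysis of characters of principal series on the elliptic set.
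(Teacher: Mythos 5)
Your overall architecture coincides with the paper's: split $I_t$ along the Bernstein decomposition, handle the cuspidal block by orthogonality of matrix coefficients (this part matches Proposition \ref{9.4a} and Claim \ref{3.1}(c)), treat the blocks with $d(x)\le d(\bO)$ by the Shalika germ expansion, and show the blocks with $d(x)>d(\bO)$ contribute nothing. However, at both of the decisive non-cuspidal steps your argument has a genuine gap.

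For the blocks with $d(x)\le d(\bO)$, the assertion that ``the Bernstein projector onto $\mcS(G)_x$ preserves this neighborhood structure'' is false: Bernstein projectors are not supported near the identity and do not preserve support conditions, so you cannot project the germ expansion block by block. The paper circumvents projectors entirely. Since $I_t$ is invariant it factors through $\mcH_G$, and by Lemma \ref{7.2} the map $\mcH'_x\to\mcH_{G,x}$ is an isomorphism with the explicit basis $\{\mu_{n,x}\}$ of Lemma \ref{6.3}; hence it suffices to test both sides on these measures. For $n\neq 0$ both sides vanish (the support $\Gamma_d t^n\Gamma_d$ consists of split elements, and $\int_{S^1}z^n|dz|=0$). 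The whole identity then reduces to the single test measure $\mu_{0,x}$, supported on $\Gamma_{d(x)}$, whose lift to $\GL(2,F)$ lies in the depth-$d(x)$ subalgebra $\ti\mcH_{d(x)}$ of \cite{Ka}; Claim \ref{9.3} (the quantitative range of validity of the germ expansion, extended to all residue characteristics by the Appendix) then gives exactly $I_t(\mu_{0,x})=c_e(t)\delta(\mu_{0,x})+c_\mcU(t)\bo(\mu_{0,x})$. This is precisely the ``main obstacle'' you name but do not resolve, so at this point your proposal is a plan rather than a proof.

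For the blocks with $d(x)>d(\bO)$, your argument is a non sequitur. That every character in $\hat G_x$ vanishes on the elliptic set (true for $x^2\neq\Id$ by Proposition \ref{4.1}(f)) does not force the restriction of $I_t$ to $\mcS(G)_x$ to vanish: the functionals $\delta_x$ and $\bo_x$ are nonzero on the block even though they are integrals of such characters, so ``Plancherel on the $x$-block'' gives you nothing here. The correct and elementary argument is the support computation of Corollary \ref{7.3}: on the basis $\{\mu_{n,x}\}$, the case $n\neq 0$ vanishes because the support is split, and the case $n=0$ vanishes because $\phi_{0,x}$ is supported on $\Gamma_{d(x)}$ and $\bO\cap\Gamma_{d(x)}=\emptyset$ by the very definition of $d(\bO)$ as the largest $d$ with $\bO\cap\Gamma_d\neq\emptyset$. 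This is where the matching between the depth of $x$ and the invariant $d(\bO)$ actually enters, and it is a statement about supports of explicit test functions, not about characters.
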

The Plancerel formula \ref{8.1} and Claim \ref{8.2} 
provide spectral descriptions of functionals $\delta _r$ and $\bo _r$ and there the spectral descriptions of $I_r$.
\section{The structure of groups $\Tsp$ and $\PGL(2,F)$} 

For any $r\in\mZ_{\ge 0}$ we define
$U_r\subset\mcO^\times$ by
\[ U_0=\mcO^\times ;\quad U_r=1 +\cal P^r,\quad r>0.\]
We denote by $da$ the Haar measure on $F$ with $\int_\mcO da=1$ and by
$d^\times a$ the Haar measure on $F^\times$ with $\int_{\mcO^\times} d^\times a=1$. Then $d^\times a =(1-q^{-1})^{-1}da/\| a\|$.

We denote by $\Theta$ the group of characters of $F^\times$,
 by $i$ the involution of $\Theta$ given by $\theta\mapsto\theta^{-1}$
 and  by $\Theta_2\subset\Theta$ the subgroup of characters $\theta$ such that $\theta^2=\Id$. We can consider $\theta\in\Theta_2$ as a character of $F^\times /(F^\times)^2$. 

We denote by $\Theta _{un}\subset \Theta$ the subgroup
of unramified characters and write $\Theta _{2,un}=\Theta _2\cap \Theta _{un}$. It is clear that $|\Theta _{2,un}|=2$.

We denote by $X$ the group of characters of $\mcO^\times$ and by  $X_2\subset X$ the subgroup of characters $x$ such that $x^2=\Id$. For any $x\in X$ we denote by $\Theta_x\subset\Theta$
the subset of characters $\theta$ such that $\theta {|\mcO^\times}=x$. For any $x\in X_2$ we define
$$\Theta_{2,x}=\Theta_x\cap\Theta_2.$$
It is clear that $|\Theta _{2 ,un}|=2$ and that 
the group  $\Theta _{2 ,un}$ acts simply transitively on $\Theta _{2 ,x}$ for all $x\in X$. So $|\Theta_{2,x}|= 2$ for all $x\in X$.

It is clear that the map
$$\Theta\to\mC^\times,\qquad\theta\mapsto\theta (\varpi),$$
defines a bijection $\Theta_x\to\mC^\times$ for any $x\in X$. This isomorphism induces a structure of an algebraic variety on $\Theta_x$, for each $x\in X$. We denote by $\mC [\Theta_x]$ the algebra of regular functions on $\Theta_x$ which is isomorphic to $\mC [z,z^{-1}]$.

In the case when $x^2=\Id$ the involution $i$ acts on $\Theta_x$. We
denote by $\mC [\Theta_x/i]\subset\mC [\Theta_x]$ the subring of invariant
functions. It is clear that  $\mC [\Theta_x/i]$ is isomorphic to $\mC [z'], z'=z+z^{-1}$.

\begin{dfn} 
For any $x\in X$ we denote by $d(x)$ the minimal integer $r\ge 0$ such that
the restriction of $x^2$ to $U_r$ is trivial. Thus $x^2|U_{d(x)}=1$ and $x^2|U_{d(x)-1}\not=1$ where subgroups $U_r$ are defined in the beginning of this Section.
\end{dfn}

$$\ti G=\GL(2,F),G'=\{ g\in \GL(2,F)| max _{i,j\in (1,2)}\|g_{ij}\|=1\}$$ 
$\ti p:\ti G\to G=\PGL(2,F)$ be the natural projection, 
and $p$ the restriction of $\ti p$ on $G'$. The map $p:G' \to G$ is surjective and the 
group $\mcO ^\times$ acts simply transitively on fibers of $p$. 
\begin{cl} For any $\mu \in \mcH =\mcH (G)$ there exists unique $\mcO ^\times$-invariant   measure $\ti \mu \in \mcH (\ti G)$ supported on $G'$ such that $p_\star \ti \mu =\mu$.
\end{cl}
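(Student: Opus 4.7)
The plan is to exploit the fact that $G'$ is open in $\tilde G$ and stable under multiplication by $\mcO^\times$ (scaling by a unit preserves the condition $\max_{i,j}\|g_{ij}\|=1$), so that $p\colon G'\to G$ is a principal $\mcO^\times$-bundle. Once this is in place, both existence and uniqueness follow from a standard averaging argument over the compact fibres.

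For existence, let $d^\times a$ be the Haar measure on $\mcO^\times$ of total mass $1$. Given $\mu=f\,dg$ with $f\in\mcS(G)$, define $\tilde\mu$ by averaging: for $F\in\mcS(\tilde G)$ set
$$\tilde\mu(F)=\int_G\Bigl(\int_{\mcO^\times}F(a\cdot \tilde g)\,d^\times a\Bigr)\,d\mu(\bar g),$$
where $\tilde g$ is any lift of $\bar g$ to $G'$; the inner integral is an integral of $F$ against the unique $\mcO^\times$-invariant probability measure on the fibre $p^{-1}(\bar g)$, so it depends only on $\bar g$. The $\mcO^\times$-invariance of $\tilde\mu$ is immediate from the invariance of $d^\times a$, and $p_\star\tilde\mu=\mu$ follows from the normalization $\int_{\mcO^\times}d^\times a=1$. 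Equivalently, in terms of functions one can simply take $\tilde\mu=\tilde f\,d\tilde g$ with $\tilde f$ a scalar multiple of $\mathbf{1}_{G'}\cdot(f\circ p)$, the scalar being fixed by the compatibility between the chosen Haar measures on $\tilde G$, $G$ and $F^\times$.

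To check that $\tilde\mu$ actually lies in $\mcH(\tilde G)$, note that $\tilde f$ is supported in $p^{-1}(\supp f)\subset G'$, which is compact because $p$ has compact fibres $\cong\mcO^\times$ and $\supp f$ is compact; and $\tilde f$ is locally constant because $f$ is and $p$ is continuous, hence $\tilde\mu$ is invariant under right translation by some open compact subgroup of $\tilde G$.

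For uniqueness, if $\tilde\mu_1,\tilde\mu_2$ both satisfy the conclusion then $\nu=\tilde\mu_1-\tilde\mu_2$ is $\mcO^\times$-invariant, supported in $G'$, and satisfies $p_\star\nu=0$. Locally trivializing $p$ over a compact open $V\subset G$ as $p^{-1}(V)\cong V\times\mcO^\times$, the $\mcO^\times$-invariance of $\nu$ forces $\nu|_{p^{-1}(V)}=\nu_V\otimes d^\times a$ for some measure $\nu_V$ on $V$, and $p_\star\nu|_V=\nu_V=0$; covering $G$ by such $V$'s yields $\nu=0$. This is a routine principal-bundle argument and I anticipate no real obstacle; the only point that requires a touch of care is the verification that $\tilde\mu$ lies in $\mcH(\tilde G)$ rather than merely being a finite Radon measure, which is handled by transferring compact support and open-subgroup invariance from $\mu$ through the bundle structure.
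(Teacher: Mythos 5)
Your proof is correct and is exactly the standard fibre-averaging/torsor argument that the paper leaves implicit (the Claim is stated without proof, relying on the observation that $\mcO^\times$ acts simply transitively on the compact fibres of $p\colon G'\to G$). Nothing further is needed.
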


We denote by $p^\star :\mcH (G)\to \mcH (\ti G)$ the map $\mu \to \ti \mu$. It is clear that the map $p^\star$ is $G$-equivariant.`

We often describe elements $g\in G=\PGL(2,F)$ in terms of a preimage $\wt g$ in $\GL(2,F)$ under the map $p:\GL(2,F)\to G$ and  matrix coefficients of $\wt g_{ij}$.
For any $g\in G$ the ratio $\frac {\wt g^2_{11}}{\det (\wt g)}$ does not depend
on a choice of a representative $\wt g$. We denote it by $\frac { g^2_{11}}{\det (g)}$.

We denote by $K\subset G$ the image of $\GL(2,\mcO)$ and by $\Tsp$ the image of the group of diagonal matrices. We use the map
$$\begin{pmatrix}a_{11}&0\\0&a_{22}\end{pmatrix}\mapsto a_{11}/a_{22}$$
to identify $\Tsp$ with $F^\times$ and $\Tsp(\mcO)$ with $\mcO^\times$, where
$\Tsp(\mcO)=\Tsp\cap K$. We denote by $\ti \in \GL (2,F)$ 
the matrix 
$$\begin{pmatrix}1&0\\0&\varpi\end{pmatrix}\mapsto a_{11}/a_{22}$$
and by  $t\in \Tsp$ the image of $\ti t$ in $\Tsp$.

The following is well known.

\begin{cl}\label{2.1} 
The subsets $Kt^nK\subset G$, $n\geq 0$, are disjoint, and $G=\cup _{n\geq 0}Kt^nK$.
\end{cl}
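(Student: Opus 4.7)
The plan is the standard reduction: lift to $\GL(2,F)$, apply an elementary-divisor argument over $\mcO$, and descend to $\PGL(2,F)$. The existence statement is an explicit Gaussian reduction on the lift, and disjointness is detected by a simple bi-$K$-invariant numerical invariant.

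For existence, given $g \in G$ choose a lift $\tilde g \in \GL(2,F)$; after multiplying by a power of $\varpi$ (which does not affect the image in $\PGL(2,F)$) I may assume $\tilde g \in M_2(\mcO)$ with at least one entry in $\mcO^\times$. Using permutation matrices in $\GL(2,\mcO)$ I move such a unit entry to the $(1,1)$ position. The unit pivot then allows clearing the $(2,1)$ and $(1,2)$ entries by elementary row and column operations whose matrices are upper/lower unitriangular with off-diagonal in $\mcO$, hence lie in $\GL(2,\mcO)$. This reduces $\tilde g$ to a diagonal matrix $\mathrm{diag}(u,\varpi^n u')$ with $u,u' \in \mcO^\times$ and $n = \val(\det \tilde g) \geq 0$, which modulo scalars is $\GL(2,\mcO)$-equivalent on both sides to $\tilde t^n$. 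Therefore $g \in K t^n K$.

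For disjointness, I introduce the function
\[
\phi(g) \;=\; \frac{\max_{i,j}\|\tilde g_{ij}\|^2}{\|\det \tilde g\|},
\]
defined on $\PGL(2,F)$ because the substitution $\tilde g \mapsto c\tilde g$ multiplies numerator and denominator by the same factor $\|c\|^2$. For $k \in \GL(2,\mcO)$ the ultrametric inequality yields $\max_{i,j}\|(k\tilde g)_{ij}\| \leq \max_{i,j}\|\tilde g_{ij}\|$, and applying the same estimate to $k^{-1}(k\tilde g)$ gives the reverse inequality, so the matrix maximum is preserved; since $\det k \in \mcO^\times$, $\phi$ is left $K$-invariant, and by symmetry right $K$-invariant. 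Evaluating on $\tilde t^n = \mathrm{diag}(1,\varpi^n)$ with $n \geq 0$ gives $\phi(t^n) = q^n$, which separates different values of $n$, forcing the double cosets $K t^n K$ to be pairwise disjoint.

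I do not expect any serious obstacle: both parts are the standard Cartan / elementary-divisor decomposition over a discrete valuation ring, and the only routine verification is the non-archimedean estimate underlying the bi-$K$-invariance of $\phi$.
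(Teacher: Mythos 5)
Your proof is correct and complete. The paper itself offers no argument for Claim 2.1 (it is simply stated as ``well known''), so there is nothing to compare against; your write-up is the standard Cartan/elementary-divisor decomposition over the discrete valuation ring $\mcO$, with existence by Gaussian elimination on an integral lift having a unit entry and disjointness via the bi-$K$-invariant function $\phi(g)=\max_{i,j}\|\tilde g_{ij}\|^2/\|\det\tilde g\|$, which takes the distinct values $q^n$ on the cosets $Kt^nK$. Both halves check out; in particular the scaling $\tilde g\mapsto c\tilde g$ indeed multiplies numerator and denominator of $\phi$ by $\|c\|^2$, so $\phi$ descends to $\PGL(2,F)$ as claimed.
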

We write $G^{\leq m}:=\cup _{0\leq n\leq m}Kt^nK$.

We define $\Gamma_0=K$ and for any $d\geq 1$ we denote by $\Gamma_d\subset K$ the image
 of the subgroup $\ti \Gamma _d$ of matrices $g$ in $\GL(2,\mcO)$ with 
$g_{21}\in\cal P^d$. So $I:=\Gamma_1$ is an {\it Iwahori subgroup} of $G$. We denote by
 $p_1:\ti \Gamma _1\to \Gamma _1$ the restriction of $p$ on $\ti \Gamma _1$.

We denote by $U\subset G$ the image of the subgroup of matrices of the form
$$g_u=\begin{pmatrix}1&u\\0&1\end{pmatrix},$$
write $B=\Tsp U$,
and denote by $b\mapsto\bar b$ the projection $B\to B/U\simeq \Tsp\simeq F^\times$.
 For any $\theta\in\Theta$ we denote by the same letter $\theta$ the character of $B$ given by $b\mapsto\theta (\bar b)$.

We denote by $det{}_2$ the map
$$\det{}_2:G\to F^\times /{(F^\times )}^2,\qquad g\mapsto\det(\wt g){(F^\times )^2}$$
and by $G^0$ the kernel of $\det_2$. If $\chr (F)\neq 2$ then $G^0$ is an open subgroup of $G$.

For any character $x\in X$ such that $d=d(x)>0$
we denote by $\wt x :\Gamma_d\to\mC^\times$ the map
$$g\mapsto x\left(\frac {g^2_{11}}{\det ( g)}\right).$$
If $d(x)=0$, that is $x^2=\Id$, we define a character  $\wt x$ of $\Gamma_0=K$ by
$\wt x(g)=x(\det_2(g))$.
\begin{cl} 
For any $x\in X$ the map  $\wt x$ is a character of $\Gamma_{d(x)}$.
\end{cl}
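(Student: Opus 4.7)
The plan is to split into the two cases of the definition and in each case verify well-definedness and multiplicativity directly.

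\textbf{Case $d(x)=0$.} Here $x^2=\Id$, so $x$ factors through $\mcO^\times/(\mcO^\times)^2$. For $g\in K$ we pick a lift $\wt g\in\GL(2,\mcO)$; two such lifts differ by multiplication by $c\in\mcO^\times$, which multiplies $\det\wt g\in\mcO^\times$ by $c^2$. Hence $\det_2$ restricted to $K$ gives a well-defined homomorphism $K\to\mcO^\times/(\mcO^\times)^2$, and $\wt x=x\circ\det_2$ is a character of $\Gamma_0=K$.

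\textbf{Case $d(x)=d>0$, well-definedness.} Given $g\in\Gamma_d$, choose a lift $\wt g\in\ti\Gamma_d\subset\GL(2,\mcO)$. Since $\wt g_{21}\in\cal P^d\subset\cal P$ and $\det\wt g=\wt g_{11}\wt g_{22}-\wt g_{12}\wt g_{21}\in\mcO^\times$, we get $\wt g_{11}\wt g_{22}\in\mcO^\times$, so in particular $\wt g_{11}\in\mcO^\times$. Rescaling $\wt g\mapsto c\wt g$ by $c\in\mcO^\times$ multiplies both $\wt g_{11}^2$ and $\det\wt g$ by $c^2$, so the ratio $\wt g_{11}^2/\det\wt g\in\mcO^\times$ depends only on $g\in\Gamma_d$. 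Thus $\wt x(g)=x(\wt g_{11}^2/\det\wt g)$ is well-defined.

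\textbf{Case $d(x)=d>0$, multiplicativity.} This is the key step. For $g,h\in\Gamma_d$ with lifts $\wt g,\wt h\in\ti\Gamma_d$ the identity
$$(\wt g\wt h)_{11}=\wt g_{11}\wt h_{11}+\wt g_{12}\wt h_{21}$$
combined with $\wt h_{21}\in\cal P^d$, $\wt g_{12}\in\mcO$ and $\wt g_{11},\wt h_{11}\in\mcO^\times$ yields
$$\frac{(\wt g\wt h)_{11}}{\wt g_{11}\wt h_{11}}=1+\frac{\wt g_{12}\wt h_{21}}{\wt g_{11}\wt h_{11}}\in U_d.$$
Squaring and using multiplicativity of $\det$, we obtain
$$\frac{\wt x(gh)}{\wt x(g)\wt x(h)}=x(u^2)\quad\text{for some }u\in U_d.$$
By the definition of $d(x)$ we have $x^2|_{U_d}=1$, i.e.\ $x(u^2)=1$ for every $u\in U_d$, so the right hand side equals $1$ and multiplicativity follows.

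The only subtle points are the observations that $\wt g_{11}\in\mcO^\times$ whenever $\wt g_{21}\in\cal P$ (so that $x$ can be evaluated at $\wt g_{11}^2/\det\wt g$) and that the depth condition $x^2|_{U_d}=1$ is precisely what eliminates the correction term coming from the lower-left matrix entry; neither is hard, but together they show that the definition of $d(x)$ is calibrated exactly to make $\wt x$ a character on $\Gamma_{d(x)}$ and generally not on any larger congruence subgroup.
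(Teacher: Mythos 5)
Your proof is correct; the paper states this Claim without proof, and your computation — reducing multiplicativity to the identity $(\wt g\wt h)_{11}=\wt g_{11}\wt h_{11}+\wt g_{12}\wt h_{21}$, observing the correction factor lies in $U_{d}$, and invoking $x^2|_{U_{d(x)}}=1$ — is exactly the verification the definition of $d(x)$ is designed to make work. The well-definedness checks (lifts differ by $c\in\mcO^\times$ scaling $\wt g_{11}^2$ and $\det\wt g$ by the same $c^2$, and $\wt g_{11}\in\mcO^\times$ since $\wt g_{21}\in\cal P$) are also right.
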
 
\begin{dfn} 
For any regular elliptic conjugacy class $\bO\subset G$ we let $d(\bO)$ be
the biggest number $d$ such that $\bO$ intersects $\Gamma_d$.
\end{dfn}

\section{Basic structure of representations of $G$} 
We say that a measure $\mu$ on $G$ is \emph{smooth} 
if it is $R$-invariant for some open subgroup $R\subset G$.
Let $\mcH$ be the space of complex-valued compactly supported smooth measures $\mu$ on $G$. For any open compact subgroup $R\subset G$ we denote by $\ch_R\subset\mcH$ the normalized Haar measure on $R$.

Convolution, denoted by $\ast$, defines an algebra structure on $\mcH$. The algebra $\mcH$ acts on $\mcS (G)G$ by convolution from the right, $(f,\mu)\mapsto f\ast\mu$, and also from the left.

The group $G$ acts on $\mcH$ by conjugation. We denote by $\mcH_G$ the space of coinvariants which  is equal to the quotient $\mcH /[\mcH ,\mcH]$.

We denote by $\mcC$ the category of smooth complex representations of $G$ and by $\wh G$ the set of equivalence classes of smooth irreducible representations of $G$.

The group $G$ acts on $\mP^1(F)$ and therefore on the spaces $\mcS (\mP^1(F))$. It is clear that the subspace $\mC$ of constant functions invariant  and we obtain the {\it Steinberg} representation $St$ of $G$ on the space 
$\mcS (\mP^1(F))/\mC$. It is well known (see \cite{GGP} )  the representation $St$ of $G$
 is irreducible. 
 For any $\theta\in\Theta_2$ we denote by $\mC_\theta$ the one-dimensional representation $g\mapsto\theta(\det_2(g))$, and define $\St_\theta=\St\otimes\mC_\theta$.

For any $(\pi ,V)\in\mcC$, $\mu\in\mcH$, we define
$$\pi (\mu)=\int_G\pi (g)\mu\in\End(V).$$
For irreducible  representations  $\pi$ of $G$ 
 the operator $\pi (\mu)$ is of finite rank for any $\mu\in\mcH$ and we define the {\it character} $\chi _\pi$ on $G$, as a 
{\it generalized function} (a functional on $\mcH$) by
$$\chi _\pi (\mu)=\tr\pi (\mu),\qquad \mu \in \mcH .$$ 
By \cite{JL}, there exist a locally $L^1$-function on $G$, (that we  denote by $\chi _\pi$) such that
$$\chi _\pi (\mu)=\int _G\chi _\pi\mu$$

We define a map $\kk:\mu\mapsto\wh\mu$ from $\mcH$ to functions on $\wh G$ by
$$\wh\mu (\pi)=\tr\pi (\mu).$$
It is clear that $\kk$ descends to a map from $\mcH_G$ to functions on $\wh G$.

We say that an irreducible representation $(\pi ,V)$ of $G$ is {\it square-integrable} if it is {\it unitarizable} (that is, there exists a nonzero $G$-invariant Hermitian form $(,)$ on $V$),
and for every $v\in V$ the function $g\mapsto (\pi (g) v,v)$ on $G$ belongs to $L^2(G)$. We denote by $\wh G_2\subset\wh G$ the subset of square-integrable representations. Let $dg$ be a Haar measure on $G$.

The following Claim follows from \cite{HC}.
\begin{cl}\label{3.1} 
$a)$ For every $(\pi ,V)\in\wh G_2$ there exists a number $\deg (\pi)=\deg (\pi,dg)>0$, called the ${\operatorname{formal}}$ ${\operatorname{degree}}$ of $\pi$, such that
$$\int_G|m_v(g)|^2dg=\frac{1}{\deg(\pi)}, \qquad m_v(g)=(\pi (g) v,v),$$
for any $v\in V, (v,v)=1$, where $dg$ is a Haar measure on $G$.

$b)$ There exists a unique choice of $dg$ with $\deg (\St,dg)=1$.

$c)$ For any irreducible square-integrable representation $(\pi ,V)$ and any $v\in V, (v,v)=1$,
the sequence of locally constant functions
$$(I_n(v))(g) :=\int _{h\in G^{\leq n}}m_v(hgh^{-1})dh$$
on $G$ converges as a generalized function to the the character
$\chi _\pi/\deg(\pi ,dg).$ In other words, for any $\mu \in \mcH$ the sequence $\{ \int I_n(v)\mu \}$ converges to $\hat \mu (\pi)/\deg( \pi)$.
\end{cl}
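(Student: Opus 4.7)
My plan is to treat the three parts in order, following Harish-Chandra's framework for square-integrable representations of reductive $p$-adic groups.

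For part (a), I would apply the standard Schur orthogonality relations for square-integrable representations. The key observation is that for any $v_1,w_1,v_2,w_2\in V$, the pairing
\[
C(v_1,w_1;v_2,w_2) \;=\; \int_G (\pi(g)v_1,\,w_1)\,\overline{(\pi(g)v_2,\,w_2)}\,dg
\]
is absolutely convergent (by the $L^2$-hypothesis together with a polarization argument), and the unimodularity of $G$ combined with the identity $m_{v,w}(gg_0) = (\pi(g)\pi(g_0)v,w)$ shows that $C$ is invariant under simultaneous $\pi$-translation of all four arguments. A standard Schur-lemma argument applied to $V\otimes\overline V$ then forces
\[
C(v_1,w_1;v_2,w_2) \;=\; \frac{1}{\deg(\pi,dg)}\,(v_1,v_2)\,\overline{(w_1,w_2)}
\]
for a unique positive scalar $\deg(\pi,dg)$ depending linearly on $dg$. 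Specializing $v_1=v_2=w_1=w_2=v$ with $(v,v)=1$ recovers the formula in (a). Part (b) is then a rescaling: since $\deg(\St,dg)$ is strictly positive and scales inversely with $dg$, there is a unique Haar measure making $\deg(\St,dg)=1$.

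For part (c), I would pair $I_n(v)$ with an arbitrary $\mu\in\mcH$. Writing $\mu = f\,dg$ and performing the substitution $g'=hgh^{-1}$ (valid since $G$ is unimodular) rewrites the pairing as
\[
\int_G I_n(v)(g)\,\mu \;=\; \int_{G^{\leq n}}\bigl(\pi(h)\,\pi(\mu)\,\pi(h)^{-1}v,\;v\bigr)\,dh.
\]
Since $\pi$ is irreducible and admissible, $\pi(\mu)$ is of finite rank; writing it in dyadic form $\pi(\mu)=\sum_{k=1}^N |\phi_k\rangle\langle\psi_k|$ turns the integrand into a finite sum whose terms are products of two matrix coefficients of $\pi$. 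Term by term, the Schur orthogonality relation from part (a) evaluates the corresponding integral over all of $G$, and summing gives $\tr(\pi(\mu))/\deg(\pi)=\chi_\pi(\mu)/\deg(\pi)$.

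The main obstacle is justifying the passage from $G^{\leq n}$ to all of $G$ as $n\to\infty$. This is precisely where the $L^2$-hypothesis is essential: each matrix coefficient of $\pi$ lies in $L^2(G)$, so by Cauchy--Schwarz the tail contribution of each dyadic term outside $G^{\leq n}$ is bounded by the product of the $L^2$-tails of two matrix coefficients of $\pi$, which tend to $0$ as $n\to\infty$. This produces convergence of $I_n(v)$ to $\chi_\pi/\deg(\pi)$ as generalized functions, as asserted. The detailed technical implementation of this convergence is the substance of the results of Harish-Chandra cited in \cite{HC}.
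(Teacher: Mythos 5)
The paper offers no proof of this claim, only the citation to Harish-Chandra \cite{HC}; your argument is precisely the standard one underlying that reference (Schur orthogonality for square-integrable representations of a unimodular group, finite rank of $\pi(\mu)$, and the Cauchy--Schwarz tail estimate over $G\setminus G^{\leq n}$), and it is correct. One cosmetic point: in part (a) you say $\deg(\pi,dg)$ depends \emph{linearly} on $dg$, whereas it scales \emph{inversely} (it is $1/\deg(\pi,dg)$ that is linear in $dg$) --- you state this correctly in part (b), so only the earlier phrase needs fixing.
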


For any smooth representation $(\pi ,V)$ of $G$ we denote by $J(V)$ the normalized  {\it Jacquet}) functor which is a representation of $\Tsp$ acting on the space $V/V(U)$ where $V(U)$ is the span of $\{ \pi (u)v-v, u\in U,v\in V\}$.
We define the action of $\Tsp$ on $JV)$ by  $a\mapsto\|a\|^{1/2}\pi (a)$, $a\in \Tsp$, of $\Tsp$ on $V_U$. Here $\| a\|=\|t_1/t_2\|$ for $a$ represented by $\begin{pmatrix}t_1&0\\0&t_2\end{pmatrix}$.

We say that $V$ is {\it cuspidal} if $J(V)=\{ 0\}$.

We denote by $\mcC_{\cusp}$ the subcategory of cuspidal representations and by $\wh G_{\cusp}\subset\wh G$ the
subset of equivalence classes of irreducible cuspidal representations. Since
matrix coefficients of cuspidal representation of $G$ have compact support (see \cite{JL}) we have an inclusion $\wh G_{\cusp}\subset\wh G_2$.

\section{Induced representations} 
For any $\theta\in\Theta$ we denote by $(\pi_\theta ,R_\theta)$ the representation of $G$ unitarily induced from the character $b\mapsto\theta (\bar b)$
of $B$. So $R_\theta$ is the space of locally constant complex valued functions $f$ on $G$ such that
$$f(gb)=\theta (\bar b)\|\bar b\|^{1/2} f(g),\quad g\in G,\quad b\in B,$$
and $G$ acts on $R_\theta$ by left shifts: $(\pi_\theta(x)f)(g)=f(x^{-1}g)$.

Since $G=KB$, the restriction to $K$ identifies the space $R_\theta$ with the space
$R_x$, $x=\theta {|\mcO^\times}$, where $R_x$ is the space
of locally constant functions $f$ on $K$ such that
$$f(kb)=\theta (\bar b) f(k),\quad k\in K,\quad b\in B\cap K.$$
It is clear that in this realization the operator $\pi_\theta (\mu)\in\End (R_x)$
is a regular function
on $\theta \in \Theta_x$ for any $\mu\in\mcH$ and so the function
$$\wh\mu_x :\theta\mapsto\wh\mu (\pi_\theta),\quad\theta\in\Theta_x,$$
belongs to $\mC [\Theta_x]$.

 The following result is well known, see \cite{JL}.

\begin{prop}\label{4.1} 
$a)$ For any $\theta\in\Theta$ we have $\End_G(R_\theta)=\mC$.\\
$b)$ A representation $\pi_\theta$ is reducible
if and only if $\theta (a)=\theta_2(a)\| a\|^{1/2}$ or $\theta (a)=\theta_2(a)\| a\|^{-1/2}$
where $\theta_2\in\Theta_2$.
In the second case $\pi_\theta$ has a one-dimensional subrepresentation $\mC_{\theta_2}$, and the quotient is isomorphic to $\St_{\theta_2}$. In the first case $\pi_\theta$ has $\St_{\theta_2}$ as a subrepresentation and the quotient is isomorphic to $\mC_{\theta_2}$.\\
$c)$ Let $\theta ,\theta'\in\Theta$ be such that $\pi_\theta ,\pi_{\theta'}$ are
irreducible. Then the representations $\pi_\theta ,\pi_{\theta'}$ are isomorphic
iff $\theta'=\theta^{\pm 1}$.\\
$d)$ We have a disjoint union decomposition
$$\wh G =\wh G_2\cup\left(\cup_{\theta_2\in\Theta_2}\mC_{\theta_2}\right)\cup
\left(\cup_{\theta\in (\Theta-\Theta_2)/i}\pi_\theta\right).$$
$e)$ We have a disjoint union decomposition
$$\wh G_2=\wh G_{\cusp}\cup\left(\cup_{\theta_2\in\Theta_2}\St_{\theta_2}\right).$$
$f)$ For any $\theta \in \Theta$ the character $\chi _\theta :=\chi _{\pi _\theta}$ is given by  a locally $L^1$-function on $G$ supported on split elements such that
$$\chi _\theta (\begin{pmatrix}a&0\\0& b\end{pmatrix})=
\frac{\theta (a/b)+\theta (b/a)}{\|(a-b)^2/ab\|^{1/2}}.$$
\end{prop}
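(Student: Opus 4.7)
The plan is to base the analysis on the Jacquet module $J(R_\theta)$ and Frobenius reciprocity $\Hom_G(R_{\theta'}, R_\theta) = \Hom_\Tsp(J(R_{\theta'}), \theta)$. First I would compute $J(R_\theta)$ via the Bruhat stratification $G = B \sqcup BwB$. Filtering $R_\theta$ by $U$-orbit type on $G/B \cong \mP^1(F)$ and passing to normalized $U$-coinvariants produces a $\Tsp$-module of length at most $2$ whose composition factors are the characters $\theta$ and its Weyl conjugate $\theta^{-1}$; the normalization $\|\bar b\|^{1/2}$ in the definition of $R_\theta$ cancels the modulus character, so that no additional twists appear in the sequence
\[ 0 \longrightarrow \theta^{-1} \longrightarrow J(R_\theta) \longrightarrow \theta \longrightarrow 0. \]
Parts (a) and (c) are then immediate consequences of Frobenius reciprocity: the Hom space has dimension at most $2$ and equals $\mC$ generically, with the second dimension realized by the standard intertwining operator $R_\theta \to R_{\theta^{-1}}$. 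In the borderline case where the two Jacquet characters coincide (namely $\theta^2 = \Id$), I would check that the extension is non-split by examining the explicit intertwining integral near its singular point, so that $\End_G(R_\theta) = \mC$ still holds.

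For part (b), I would exhibit the one-dimensional subrepresentation $\mC_{\theta_2} \hookrightarrow R_{\theta_2\|\cdot\|^{-1/2}}$ by pulling back the constant function on $G/B$; comparing Jacquet modules forces the quotient to be $\St_{\theta_2}$, and the companion case is obtained by applying the intertwiner to $R_{\theta^{-1}}$. Outside this locus the Jacquet module's two composition factors are distinct and no $T$-equivariant section of $J(R_\theta) \to \theta$ extends to a $G$-subrepresentation, giving irreducibility. Parts (d) and (e) then follow from the Bernstein decomposition of $\mcC$: every non-cuspidal irreducible is a subquotient of some $R_\theta$, and among such subquotients only the Steinberg twists $\St_{\theta_2}$ have square-integrable matrix coefficients, as can be read off from the asymptotic decay on $\Tsp$ encoded by the Jacquet module.

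For part (f), I would compute $\chi_\theta$ on split regular elements by localizing the trace $\tr \pi_\theta(\mu)$ against a test measure $\mu$ supported near a split regular $t \in \Tsp$. In the compact model $R_x$ the trace unfolds as an integral over $B$-cosets in $G$ meeting the conjugacy class of $t$; exactly $|W| = 2$ cosets contribute, one for each ordering of the eigenvalues, which yields the numerator $\theta(a/b) + \theta(b/a)$, while the Jacobian $|\det(\Id - \Ad(t^{-1}))|$ on $\fg/\ft$ supplies the denominator $\|(a-b)^2/ab\|^{1/2}$. The vanishing on the elliptic locus reflects the absence of elliptic characters in the Jacquet module of a principal series. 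The main obstacle is promoting this regular-semisimple formula to the statement that $\chi_\theta$ is represented by a locally $L^1$-function on all of $G$; this requires Harish-Chandra's local integrability theorem. Once that is in hand, the regular semisimple values uniquely determine $\chi_\theta$ as a generalized function, and the remaining manipulations are routine.
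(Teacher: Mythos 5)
The paper offers no proof of Proposition \ref{4.1} at all --- it is quoted as ``well known'' with a reference to \cite{JL} --- so your proposal is necessarily a different route: it is the standard Bernstein--Zelevinsky/Casselman reconstruction, and it is essentially correct. The exact sequence $0\to\theta^{-1}\to J(R_\theta)\to\theta\to 0$ from the Bruhat filtration plus Frobenius reciprocity does yield (a) and (c) (note that when $\theta\neq\theta^{-1}$ the space $\Hom_{\Tsp}(J(R_\theta),\theta)$ is exactly one-dimensional, since only the quotient constituent can map to $\theta$; the two-dimensional possibility occurs only for $\theta^2=\Id$, where you correctly reduce to non-splitness of the self-extension). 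The embedding $\mC_{\theta_2}\hookrightarrow R_{\theta_2\|\cdot\|^{-1/2}}$ together with the identification $R_{\|\cdot\|^{-1/2}}\cong\mcS(\mP^1(F))$ gives (b), with irreducibility of $\St$ quoted exactly as the paper quotes it; Casselman's exponent criterion gives (e); and the induced-character (van Dijk/Frobenius) formula gives (f). Two spots deserve more care. First, for irreducibility off the locus $\theta=\theta_2\|\cdot\|^{\pm 1/2}$, the phrase ``no $\Tsp$-equivariant section extends'' is not yet an argument: for unitary $\theta$ use unitarizability plus $\End_G(R_\theta)=\mC$ (a unitarizable representation with scalar endomorphisms is irreducible), and for non-unitary $\theta$ use the intertwining operator $R_\theta\to R_{\theta^{-1}}$ or the subrepresentation theorem to show a proper constituent would force $\theta\theta^{-1}$-type exponent relations that only occur on the stated locus. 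Second, in (f) you do not need the full Harish-Chandra local integrability theorem: for $\GL(2)$ the function $\|(a-b)^2/ab\|^{-1/2}$ is locally integrable by a direct Weyl-integration estimate, but you do need the unfolding computation (van Dijk's formula) to identify the distribution character with that function and to get the vanishing on elliptic classes --- the correct reason for the latter is that the induced character is supported on the closure of $\bigcup_g gBg^{-1}$, not merely the ``absence of elliptic characters in the Jacquet module.'' With these points filled in, your argument is a complete proof of the statement the paper delegates to \cite{JL}.
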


\begin{dfn} 
$(1)$ For any $x\in X$, we denote by $(\tau_x, V_x)$ the representation of $G$ by left shifts  on the space of locally constant compactly supported functions $f$ on $G$ such that

$$f(gtu)=x(t)f(g),\quad g\in G,\quad t\in \Tsp(\mcO),\quad u\in U.$$
$(2)$ We denote by $f_0\in V_x$ the function supported on
$\Gamma_dU$, $d=d(x)$, and such that
$$f_0(\gamma u)=\wt x(\gamma),\quad\gamma\in\Gamma_d,\quad u\in U.$$
$(3)$ We denote by $p_\theta$, $\theta\in\Theta_x$, the map
$$p_\theta :V_x\to R_\theta,\quad (p_\theta (f))(g)
=\sum_{n\in\mZ}f(gt ^n)q^n\theta(\varpi^n),$$
where as before $t$ is the image in $G$ of
$$\begin{pmatrix}1&0\\0&\varpi\end{pmatrix}\in\GL(2,F).$$
\end{dfn}

We also have

\begin{prop}\label{4.2} 
If $M\subset V_x$ is a $G$-invariant subspace such that $p_\theta (M)=R_\theta$ for all $\theta\in\Theta_x$ then $M=V_x$.
\end{prop}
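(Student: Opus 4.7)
The plan is to argue by contradiction. Suppose $M\subsetneq V_x$ is $G$-invariant; I will produce an irreducible $G$-representation $\pi$ and a nonzero $G$-equivariant surjection $\phi\colon V_x\twoheadrightarrow\pi$ with $M\subset\ker\phi$, and then derive a contradiction with the hypothesis $p_\theta(M)=R_\theta$ by showing $\phi$ must factor through some $p_\theta$. Since $V_x=c\textrm{-Ind}^G_{\Tsp(\mcO)U}(x)$ is cyclic as a $G$-module, generated by the $x$-twisted characteristic function of the single coset $\Tsp(\mcO)U$, the nonzero quotient $V_x/M$ is cyclic, and Zorn's lemma applied to the proper submodules (all of which avoid the cyclic generator) produces a maximal one; the associated irreducible quotient supplies $\pi$ and $\phi$.

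By Frobenius reciprocity $\Hom_G(V_x,\pi) \cong \Hom_{\Tsp(\mcO)U}(x,\pi|_{\Tsp(\mcO)U})$, so $\pi$ must contain a nonzero $U$-invariant vector on which $\Tsp(\mcO)$ acts through $x^{\pm 1}$. Combined with Proposition \ref{4.1}(d)--(e), this restricts $\pi$ to three families: an irreducible principal series $R_\theta$ with $\theta\in\Theta_x$; a one-dimensional $\mC_{\theta_2}$ with $\theta_2\in\Theta_{2,x}$; or a twisted Steinberg $\St_{\theta_2}$ with $\theta_2\in\Theta_{2,x}$. For each such $\pi$, Proposition \ref{4.1}(b) supplies a canonical surjection $\alpha_\pi\colon R_{\theta_\pi}\twoheadrightarrow\pi$ with $\theta_\pi\in\Theta_x$: take $(\theta_\pi,\alpha_\pi)=(\theta,\Id)$ for $\pi=R_\theta$; $(\theta_2\|\cdot\|^{1/2},R_{\theta_\pi}\twoheadrightarrow\mC_{\theta_2})$ for $\pi=\mC_{\theta_2}$; and $(\theta_2\|\cdot\|^{-1/2},R_{\theta_\pi}\twoheadrightarrow\St_{\theta_2})$ for $\pi=\St_{\theta_2}$.

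The crux is to show every $\phi\in\Hom_G(V_x,\pi)$ is a linear combination of maps of the form $\alpha\circ p_\theta$ for appropriate $\theta\in\Theta_x$ and surjections $\alpha\colon R_\theta\twoheadrightarrow\pi$. By Frobenius reciprocity this reduces to computing the $(U,\Tsp(\mcO)=x^{\pm1})$-isotypic subspace of $\pi$ and checking it is spanned by images through $p_\theta$ for suitable $\theta$: for $\pi=R_\theta$ the Bruhat decomposition $G=B\sqcup BwB$ yields this subspace; for $\pi=\mC_{\theta_2}$ it is one-dimensional because $\mC_{\theta_2}|_{\Tsp(\mcO)U}$ is the character $x$; and for $\pi=\St_{\theta_2}$ the analysis uses the realization $\St=\mcS(\mP^1(F))/\mC$ together with Proposition \ref{4.1}(b). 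Granted this factorization, $\phi(M)=\alpha(p_\theta(M))=\alpha(R_\theta)=\pi$ by hypothesis and surjectivity of $\alpha$ (with the analogous conclusion summing over a finite linear combination), while $\phi(M)=0$ since $M\subset\ker\phi$. This contradiction forces $M=V_x$. The main obstacle is the Hom-space computation for $\pi=\St_{\theta_2}$, which requires tracking $U$-invariants through the quotient $\mcS(\mP^1(F))/\mC$; the other cases follow directly from Frobenius reciprocity and a Bruhat-cell analysis.
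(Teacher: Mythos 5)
Your overall strategy is the same as the paper's: produce an irreducible quotient $\pi$ of $V_x/M$ (using cyclicity of $V_x$ and Zorn), show that every $G$-map $V_x\to\pi$ factors through the projections $p_\theta$, and contradict $p_\theta(M)=R_\theta$. The paper disposes of both ingredients by citing Bernstein and Bernstein--Zelevinsky; you try to prove the second one by hand, and this is where the gaps are.

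First, the form of Frobenius reciprocity you invoke, $\Hom_G(V_x,\pi)\cong\Hom_{\Tsp(\mcO)U}(x,\pi|_{\Tsp(\mcO)U})$, is not valid: $\Tsp(\mcO)U$ is a closed but \emph{non-open} (and non-compact) subgroup of $G$, and compact induction from such a subgroup is not left adjoint to restriction. The correct statement — which is what the citation of \cite{BZ} encapsulates — goes through Bernstein's second adjointness and the Jacquet functor: $\Hom_G(V_x,\pi)$ is computed from the $x$-eigenspace of $\Tsp(\mcO)$ on the $U$-\emph{coinvariants} $J(\pi)$, not on the $U$-invariants $\pi^U$. These are genuinely different (e.g.\ for $\St$ the $U$-invariants of $\mcS(\mP^1(F))/\mC$ are a delicate object, whereas $J(\St_{\theta_2})$ is a one-dimensional character of $\Tsp$, which is exactly what makes the $\St_{\theta_2}$ case easy rather than "the main obstacle"). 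Since $J(\pi)$ is at most two-dimensional for every irreducible $\pi$ of $\PGL(2,F)$, the desired description of $\Hom_G(V_x,\pi)$ falls out of this adjunction; your route via $\pi^U$ does not close, and you acknowledge as much by leaving the crux unproved. Second, your final contradiction is incomplete in the case where $\phi$ is a genuine linear combination $\sum_i\alpha_i\circ p_{\theta_i}$: knowing $p_{\theta_i}(M)=R_{\theta_i}$ for each $i$ separately does \emph{not} force $\phi(M)\neq 0$, since the image of $M$ in $\bigoplus_i R_{\theta_i}$ could be a proper (graph-type) submodule surjecting onto each factor. This issue is vacuous exactly when $\dim\Hom_G(V_x,\pi)=1$ — which holds for $x^2\neq\Id$, the case actually used in Corollary \ref{4.3}, because then only one of the two characters in $J(\pi)$ restricts to $x$ on $\Tsp(\mcO)$ — but for $x^2=\Id$ the Hom space can be two-dimensional and your parenthetical "analogous conclusion for a finite linear combination" is not a proof. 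To repair the argument you should replace the $U$-invariant computation by the Jacquet-module computation and then verify one-dimensionality of the relevant Hom spaces (or otherwise handle the linear-combination case) before concluding.
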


\begin{proof}  As follows \cite{Be}  it suffices to show that there is no
nonzero morphism from $V_x/M$ to an irreducible representation of $G$.
But  as follows from \cite{BZ}  all morphisms from $V_x$ to an irreducible representation of $G$ are factorizable through a projection $p_\theta$ for some $\theta \in \Theta_x$.
\end{proof}

\begin{cor}\label{4.3} 
If $x^2\neq Id$ then the function $f_0$ generates $V_x$ as an $\mcH$-module.
\end{cor}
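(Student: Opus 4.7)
The plan is to combine Proposition \ref{4.2} with the irreducibility of $\pi_\theta$ for all $\theta \in \Theta_x$, reducing the claim to a single nonvanishing check.

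First I would observe that under the hypothesis $x^2 \neq \Id$ every representation $\pi_\theta$ with $\theta \in \Theta_x$ is irreducible. Indeed, by Proposition \ref{4.1}(b), reducibility of $\pi_\theta$ forces $\theta = \theta_2 \|\cdot\|^{\pm 1/2}$ for some $\theta_2 \in \Theta_2$, and then $\theta|_{\mcO^\times}^{\,2} = \theta_2^2 |_{\mcO^\times} \cdot \|\cdot\|^{\pm 1}|_{\mcO^\times} = \Id$, contradicting $x^2 \neq \Id$.

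Let $M \subset V_x$ be the $\mcH$-submodule generated by $f_0$. Since $p_\theta$ is $G$-equivariant, $p_\theta(M) = \mcH \cdot p_\theta(f_0) \subset R_\theta$. If one shows $p_\theta(f_0) \neq 0$ for every $\theta \in \Theta_x$, then by irreducibility of $\pi_\theta$ one gets $p_\theta(M) = R_\theta$, and Proposition \ref{4.2} yields $M = V_x$, which is exactly what we need.

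So the core computation is to evaluate $p_\theta(f_0)$ at a convenient point, say $g = e$:
\[
(p_\theta(f_0))(e) \;=\; \sum_{n \in \mZ} f_0(t^n)\, q^n\, \theta(\varpi^n).
\]
Recall $d := d(x) \ge 1$ (since $x^2 \neq \Id$ excludes $d = 0$), so $f_0$ is supported on $\Gamma_d U \subset KU$. A quick check with the Iwasawa decomposition shows that $t^n \in KU$ forces $n = 0$ (as $t$ represents a nontrivial element of $G/KU$). Hence only the $n = 0$ term survives, giving $(p_\theta(f_0))(e) = f_0(e) = \wt{x}(e) = 1 \neq 0$. This evaluation is uniform in $\theta$, so it handles the whole family at once; I anticipate no real obstacle here beyond making the $KU$-support observation carefully in $\PGL(2,F)$ (keeping track of the projection from $\GL(2,F)$).
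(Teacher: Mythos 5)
Your proposal is correct and follows essentially the same route as the paper: show $p_\theta(f_0)\neq 0$, use the irreducibility of $\pi_\theta$ for $\theta\in\Theta_x$ (from Proposition \ref{4.1}) to conclude that $p_\theta(f_0)$ generates $R_\theta$, and then invoke Proposition \ref{4.2}. The only difference is that you spell out the details the paper declares ``clear,'' namely the irreducibility check and the evaluation $(p_\theta(f_0))(e)=1$ via the observation that $t^n\in KU$ forces $n=0$.
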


\begin{proof}
It is clear that $f_{\theta ,0} :=p_\theta (f_0)\in R_\theta$ is not equal to $0$. Moreover, it follows from Proposition \ref{4.1} a),b) that it generates $R_\theta$ as an $\mcH$-module. But then Proposition \ref{4.2} implies that $f_0$ generates $V_x$ as an $\mcH$-module.
\end{proof}

The following result follows from Corollary \ref{4.3}.
We assume that $x^2\neq Id$ and use the identification of the ring  $\mC [\Theta]$ with $\mC [z,z^{-1}]$ as in the Introduction. Let 

$$\alpha :\mC[\Theta_x]\simeq\mC [z,z^{-1}]\to\End_G(V_x)$$
be the algebra morphism defined by $((\alpha (z))(f))(g)=q^{-1}f(gt^{-1})$, $f\in V_x$.\\

\begin{cor}\label{4.4} 

$a)$ For any $S\in\End_G(V_x)$, and $\theta\in\Theta_x$, the map $S$ preserves the subspace $\ker (p_\theta)\subset V_x$ and so defines $\wh S(\theta)\in\End_G(R_\theta)=\mC$.\\
$b)$ For any $S\in\End_G(V_x)$, the function $\wh S$ on $\Theta_x$  belongs to
 $\mC [\Theta_x]\simeq\mC [z,z^{-1}]$.\\
$c)$ The maps

$$\End_G(V_x)\to \mC [\Theta_x], S\mapsto\wh S$$
and
$$\mC [\Theta_x]\to \End_G(V_x), s\mapsto\alpha (s)$$
are mutually inverse.
\end{cor}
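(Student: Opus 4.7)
The plan is to prove that $\alpha$ is an isomorphism; statements (a), (b), (c) will then follow immediately. I will first verify that $\alpha$ is a well-defined algebra homomorphism: the formula $(\alpha(z)f)(g)=q^{-1}f(gt^{-1})$ lands in $V_x$ and commutes with left translation because $t$ normalizes $\Tsp(\mcO)U$ and preserves its $\Tsp(\mcO)$-character. Substituting into the definition of $p_\theta$ and reindexing yields
\[
(p_\theta(\alpha(z)f))(g)=\sum_{n\in\mZ}q^{-1}f(gt^{n-1})q^n\theta(\varpi^n)=\theta(\varpi)\,(p_\theta f)(g),
\]
so $p_\theta\circ\alpha(s)=s(\theta)\,p_\theta$ for every $s\in\mC[\Theta_x]$. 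This shows at once that each $\alpha(s)$ preserves every $\ker p_\theta$, that $\widehat{\alpha(s)}=s$, and therefore that $\widehat{(\cdot)}\circ\alpha$ is the identity on $\mC[\Theta_x]$. In particular $\alpha$ is injective; once its surjectivity is established, (a), (b), (c) will follow at once for every $S=\alpha(s)$.

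For surjectivity I will use Corollary \ref{4.3}: since $f_0$ generates $V_x$ as an $\mcH$-module, $S\mapsto S(f_0)$ embeds $\End_G(V_x)$ into $V_x$. Because $\tau_x(\gamma)f_0=\tilde x(\gamma)^{-1}f_0$ for $\gamma\in\Gamma_d$ (a direct check from the definitions of $f_0$ and $\tilde x$), the image lies in the twisted-equivariant space
\[
W:=\{v\in V_x : \tau_x(\gamma)v=\tilde x(\gamma)^{-1}v \text{ for every }\gamma\in\Gamma_d\}.
\]
Each $\alpha(z^n)f_0$ belongs to $W$, and these vectors have pairwise disjoint supports modulo $\Tsp(\mcO)U$, so they are linearly independent; it suffices to show that they span $W$. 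Given this, any $S(f_0)$ will lie in $\alpha(\mC[z,z^{-1}])\cdot f_0$, whence $S=\alpha(s)$ for some $s$ by the generating property of $f_0$.

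The main obstacle is identifying $W$. Using that $t$ normalizes $\Tsp(\mcO)U$, Iwasawa gives a disjoint decomposition $G=\bigsqcup_{n\in\mZ}Kt^n\,\Tsp(\mcO)U$, with each piece mapping to $K/(K\cap\Tsp(\mcO)U)\cong\mP^1(F)$ in $G/\Tsp(\mcO)U$. A vector in $W$ restricted to the $n$-th piece corresponds to a function on $\Gamma_d\backslash\mP^1(F)$ subject to a compatibility at each $\Gamma_d$-orbit: whenever $\gamma\in\Gamma_d$ has $k^{-1}\gamma k\in K\cap\Tsp(\mcO)U$, one must have $\tilde x(\gamma)=x((k^{-1}\gamma k)_{11}/(k^{-1}\gamma k)_{22})$. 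On the $\Gamma_d$-orbit of $[1:0]$, which equals $\{[1:y]:\val(y)\ge d\}$, the identity $\tilde x(h)=x(h_{11}/h_{22})$ for upper-triangular $h\in K$ makes the compatibility automatic and produces the one-dimensional contribution spanned by $\alpha(z^n)f_0$. For any other $\Gamma_d$-orbit in $\mP^1(F)$, I will exhibit an upper-triangular $\gamma\in\Gamma_d$ for which the compatibility collapses to $x(\beta)^2=1$ with $\beta$ varying over a coset of $U_r$ for some $r<d$; since $x^2|_{U_{d-1}}\ne\Id$ by the definition of $d=d(x)$, one can find $\beta$ violating this, ruling out the orbit. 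Thus each piece of $G$ contributes exactly one dimension to $W$, giving $W=\bigoplus_{n\in\mZ}\mC\cdot\alpha(z^n)f_0$ and completing the proof.
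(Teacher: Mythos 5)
Your argument is correct, and it reaches the conclusion by a more self-contained route than the paper. The paper simply asserts that Corollary \ref{4.4} ``follows from Corollary \ref{4.3}'', deferring the real content to the later structure theory: the isomorphism $V_x\cong W_x=\ind_{\Gamma_d}^G\wt x$ (Lemma \ref{5.1}), the identification $\End_G(W_x)\cong\mcH'_x$ (Lemma \ref{6.1}), and the computation that $\{\mu_n\}_{n\in\mZ}$ is a basis of $\mcH'_x$ with $\wh\mu_n=cz^n$, which rests on $\dim R_\theta^0=1$ (Lemma \ref{6.3}). You instead stay inside $V_x$: after the (correct) computation $p_\theta\circ\alpha(z)=\theta(\varpi)p_\theta$, which gives $\wh{\ \cdot\ }\circ\alpha=\mathrm{id}$ and hence injectivity plus parts (a),(b) for endomorphisms in the image, you reduce surjectivity to identifying the $(\Gamma_d,\wt x^{-1})$-eigenspace $W\subset V_x$, and you compute $W$ by a Mackey-type analysis of the $\Gamma_d$-orbits on $K/(K\cap B)\cong\mP^1(\mcO)$ within each piece of the Iwasawa decomposition $G=\bigsqcup_n Kt^n\Tsp(\mcO)U$. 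I checked the deferred steps: the orbit of $[0:1]$ forces $x^2|_{\mcO^\times}=1$, and the orbit of $[1:\varpi^j]$ ($1\le j\le d-1$) forces $x^2|_{U_j}=1$ via the stabilizing elements $\left(\begin{smallmatrix}a&b\\0&a+b\varpi^j\end{smallmatrix}\right)$, both contradicting $x^2|_{U_{d-1}}\neq 1$; and compact support modulo $\Tsp(\mcO)U$ ensures $W$ is the algebraic direct sum of the lines $\mC\cdot\alpha(z^n)f_0$. Both your proof and the paper's ultimately hinge on the cyclicity of $f_0$ (Corollary \ref{4.3}) to pass from $S(f_0)$ to $S$; what your approach buys is a direct, elementary verification of the facts the paper only establishes in Sections 5--6, at the cost of essentially reproving $\dim R_\theta^0=1$ in disguise.
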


\section{Structure of the representation $(\tau_x,V_x)$ when $d(x)>0$} 

In this section we fix a character $x\in X$ such that $d(x)>0$ (so $x^2\neq Id$).
\begin{dfn} 
$(1)$ We denote by $\rho_x$ the representation $\ind^G_{\Gamma_d}\wt x$ of $G$
on the space $\wt W_x$ of locally constant functions $\phi$ on $G$ such that
$$\phi (g\gamma)=\wt x(\gamma)\phi (g),\quad g\in G,\quad\gamma\in\Gamma_d,$$
and by $W_x\subset\wt W_x$ the subspace of functions with compact support.\\
$(2)$ Denote by $\phi_0\in W_x$ the function supported on $\Gamma_d$ and equal to $\wt x$ there, and define
$$\mu_0:=\phi_0\ch_{\Gamma_d}\in\mcH.$$
$(3)$ Let $A:V_x\to\wt W_x$, $B:W_x\to V_x$ be the $G$-morphisms defined by
$$A(f)=f\ast\mu_0,\qquad B(\phi)=\phi_U,\qquad \phi_U(g)=\int_ U\phi (gu)du,$$
where $du$ is the Haar measure on $U$ which is normalized by $\int_{U\cap K}du=1$.
\end{dfn}

\begin{lem}\label{5.1} 
$a)$ $B(\phi_0)=f_0.$\\
$b)$ $A(f_0)=\phi_0.$\\
$c)$ $A$ defines an isomorphism $A:V_x\to W_x.$\\

$d)$ $\End_G(V_x)\simeq\End_G(W_x).$
\end{lem}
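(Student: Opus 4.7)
The plan is to prove (a) and (b) by explicit calculation and to deduce (c) and (d) from them using that $f_0$ and $\phi_0$ are cyclic vectors.

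For (a), I would note that the support of $B(\phi_0)$ lies in $\Gamma_d U$ since $\phi_0$ is supported on $\Gamma_d$. For $g = \gamma_0 u_0 \in \Gamma_d U$, the condition $gu \in \Gamma_d$ rearranges to $u_0 u \in U \cap \Gamma_d = U \cap K$, a translate of $U \cap K$ in $U$ of $du$-measure one. Since $\wt x$ is a character of $\Gamma_d$ that is trivial on $U \cap K$, the integrand $\phi_0(gu)$ is constantly $\wt x(\gamma_0)$ on this translate, producing $B(\phi_0)(g) = \wt x(\gamma_0) = f_0(g)$.

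For (b), I would parametrize $\Gamma_d$ by its Iwahori factorization $\gamma = u^-(c) t(a) u^+(b)$ with $(c,a,b) \in \mcO \times \mcO^\times \times \mcO$, so that $\wt x(\gamma) = x(a)$. After the substitution $\gamma \mapsto \gamma^{-1}$ in the definition of $A$, the right $U$-invariance of $f_0$ absorbs $u^+(b)$, and the identity $f_0(\,\cdot\, t(a)) = x(a) f_0(\,\cdot\,)$ cancels the $x(a)^{-1}$ weight, collapsing the integral to
\[
A(f_0)(g) \;=\; \int_{c \in \mcO} f_0\bigl(g\, u^-(c)\bigr)\, dc.
\]
For $g = \gamma_0 \in \Gamma_d$ the integrand equals $\wt x(\gamma_0)$, giving $A(f_0)(\gamma_0) = \wt x(\gamma_0) = \phi_0(\gamma_0)$. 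The $G$-equivariance of $A$ together with the fact that $f_0$ is a $\wt x^{-1}$-eigenvector for the left $\Gamma_d$-action on $V_x$ implies that $A(f_0)$ is bi-$(\Gamma_d, \wt x)$-equivariant; combined with the support bound $\mathrm{supp}\,A(f_0) \subseteq \Gamma_d U \Gamma_d = \Gamma_d \sqcup \Gamma_d(U\setminus U\cap K)\Gamma_d$, it remains to verify that $A(f_0)(u^+(y)) = 0$ for every $y \in F \setminus \mcO$. A direct matrix calculation shows
\[
u^+(y)\, u^-(c) \;=\; \begin{pmatrix} 1+yc\varpi^d & y \\ c\varpi^d & 1 \end{pmatrix},
\]
which lies in $\Gamma_d U$ precisely when $1 + yc\varpi^d \in \mcO^\times$, and on that locus $f_0(u^+(y) u^-(c)) = x^2(1 + yc\varpi^d)$. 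Writing $y = y'/\varpi^e$ with $e \ge 1$ and $y' \in \mcO^\times$, the change of variables $v = 1 + yc\varpi^d$ identifies the integral (up to a positive constant) with $\int_{U_{d-e}} x^2(v)\, dv$ when $1 \le e \le d$ and with $\int_{\mcO^\times} x^2(v)\, dv$ when $e \ge d$. The minimality in the definition of $d(x) = d$ makes $x^2|_{U_{d-1}}$ nontrivial, hence $x^2$ is nontrivial on $U_{d-e}$ and on $\mcO^\times$, so orthogonality of characters gives vanishing in both cases.

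For (c), parts (a) and (b) show that $B \circ A \in \End_G(V_x)$ fixes $f_0$. Corollary 4.3, whose hypothesis $x^2 \neq \Id$ holds because $d(x) > 0$, implies that $f_0$ generates $V_x$ as an $\mcH$-module, so $B \circ A = \id_{V_x}$. For the other composition, $\phi_0$ generates $W_x = \ind_{\Gamma_d}^G \wt x$ as an $\mcH$-module tautologically, and $A(\mu \cdot f_0) = \mu \cdot A(f_0) = \mu \cdot \phi_0 \in W_x$ shows both that the image of $A$ is contained in $W_x$ and that $A : V_x \to W_x$ is surjective. Injectivity follows from $B \circ A = \id$, so $A : V_x \to W_x$ is a $G$-equivariant isomorphism with inverse $B$. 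Finally, (d) follows from (c) by the algebra isomorphism $S \mapsto A \circ S \circ B$.

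The main obstacle is the vanishing step in (b): the matrix bookkeeping needed to isolate the factor $x^2(1 + yc\varpi^d)$ and to recognize the resulting integral as one to which the orthogonality for a nontrivial character of $U_{d-1}$ (or $\mcO^\times$) applies, which is precisely the point at which the depth condition $d(x) = d$ is used.
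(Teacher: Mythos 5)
Your proof is correct and follows essentially the same route as the paper's: the same reduction of part (b) to the vanishing of $\int f_0(g_y\,\gamma)\,\wt x(\gamma)^{-1}d\gamma$ on the double cosets $\Gamma_d u^+(y)\Gamma_d$ with $y\notin\mcO$, the same matrix manipulation isolating the factor $x^2(1+yc\varpi^d)$ on the locus $1+yc\varpi^d\in\mcO^\times$, and the same deduction of (c) and (d) from the cyclicity of $f_0$ (Corollary 4.3) and of $\phi_0$. Your treatment is somewhat more detailed than the paper's — in particular the case analysis on $\val(y)$ showing that the resulting character integral runs over $U_{d-e}$ or $\mcO^\times$, where $x^2$ is nontrivial by the minimality defining $d(x)$, makes explicit what the paper compresses into "we are integrating a nontrivial character".
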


\begin{proof}
Part a) is clear. It is also clear that the restriction of $A(f_0)$ to $\Gamma_d$ is equal to $\wt x$ and that $\supp (A(f_0))\subset\Gamma_dU\Gamma_d$. So to prove (b) it suffices to check that for any $u\in F,\| u\|>1$, we have
$$\int_{\Gamma_d}f_0(g_u\gamma)\wt x(\gamma)^{-1}d\gamma =0,$$
where $d\gamma$ is the normalized Haar measure on $\Gamma_d$ and
$g_u =\begin{pmatrix}1&u\\0&1\end{pmatrix}.$ To see this, write $\gamma$ as
$\gamma_0\gamma_1$, $\gamma_0=\begin{pmatrix}1&0\\c&1\end{pmatrix}$,
$\gamma_1=\begin{pmatrix}a&b\\0&d\end{pmatrix}$. Note that $\wt{x}(\gamma)=\wt{x}(\gamma_1)=x(a/d)$. The integral equals
\[ \int f_0\left(\begin{pmatrix}1&u\\0&1\end{pmatrix} \begin{pmatrix}1&0\\c&1\end{pmatrix}\gamma_1\right)\wt{x}(\gamma_1)^{-1}dc\,d\gamma_1
=\int f_0\left(\begin{pmatrix}1+uc&0\\c&(1+uc)^{-1}\end{pmatrix}\gamma_1
\begin{pmatrix}1&\frac{d}{a}\frac{u}{1+uc}\\0&1\end{pmatrix}\right)x(a/d)^{-1}dc\,d\gamma_1 \]
\[ =\int x(1+uc)^2dc,\qquad \{c\in\cal P^d;\,1+uc\in\mcO^\times\}, \]
since $f_0$ is supported on $\Gamma_dU$ (so it vanishes unless $|1+uc|=1$)and  we are integrating a nontrivial character  the integral  is 0.

The part $c)$ follows from the parts $a),b)$ since
by Corollary \ref{4.3}`
the function $f_0$ generates $V_x$ as an $\mcH$-module
and ,as easy to check, the function $\phi _0$ generates $W_x$ as an $\mcH$-module. The part $d)$ follows from $c)$.

\end{proof}

\section{Algebras of endomorphisms} 
As before we fix (in this section) a character $x\in X$  such that $x^2\neq 1$.

\begin{lem}\label{6.1} 
Let $\mcH. '_x\subset\mcH$ be the subalgebra of measures $\mu$ with
$$l_\gamma (\mu)=r_\gamma (\mu)=\wt x (\gamma)\mu,\quad\gamma\in\Gamma_d,$$
where $l_\gamma$, $r_\gamma $ are left and right shifts by $\gamma$. Then

 $(1)$ $\mu_0=\phi_0\ch_{\Gamma_d}$ is the unit of $\mcH '_x$.

 $(2)$ Convolution on the right defines an isomorphism $\beta :\mcH '_x\to\End_G(W_x)$.
\end{lem}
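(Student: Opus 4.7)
The plan is to recognize that $\mcH'_x$ is nothing but the Hecke algebra of the pair $(\Gamma_d, \wt x)$, and then to write down the classical Frobenius reciprocity isomorphism $\beta$ to $\End_G(W_x) = \End_G(\ind^G_{\Gamma_d} \wt x)$. Part (1) is preparatory for part (2): once we know $\mu_0$ is a two-sided unit, the map $\beta$ will automatically send it to $\Id_{W_x}$, which is the key test case for both injectivity and surjectivity.

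For (1), I first check that $\mu_0 \in \mcH'_x$: since $\mu_0$ is supported on $\Gamma_d$ and equals $\wt x$ there, the character property $\wt x(\gamma_1 \gamma_2) = \wt x(\gamma_1)\wt x(\gamma_2)$ immediately gives $l_\gamma \mu_0 = r_\gamma \mu_0 = \wt x(\gamma)\mu_0$. Then for $\mu \in \mcH'_x$ I compute $\mu_0 * \mu$ by writing $d\mu_0 = \wt x(\gamma)\, d\gamma$ with $d\gamma$ the normalized Haar measure on $\Gamma_d$; plugging into the convolution, the integrand becomes $\wt x(\gamma)\cdot \wt x(\gamma)^{-1}$ times $\mu$ (using the left $\wt x$-equivariance of $\mu$), so the integral collapses to $\mu$ since $\Gamma_d$ has total mass $1$. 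The computation $\mu * \mu_0 = \mu$ is symmetric, using the right equivariance.

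For (2), define $\beta(\mu)\phi := \phi * \mu$. I first check that $\beta(\mu)$ actually lands in $\End_G(W_x)$: right $\wt x$-equivariance of $\phi * \mu$ under $\Gamma_d$ follows from that of $\mu$; compactness of support is preserved by convolution with a compactly supported measure; and $G$-equivariance is immediate because left translation commutes with convolution on the right. Associativity of convolution then makes $\beta$ an algebra map, and by (1) it sends $\mu_0$ to the identity. Injectivity is extracted from the test vector $\phi_0$: a short unwinding identifies $\phi_0 * \mu$ with the function representing $\mu$ itself, so $\beta(\mu)=0$ forces $\mu=0$.

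For surjectivity, given $S \in \End_G(W_x)$, I form $\mu_S := (S\phi_0)\, dg$ (possibly with the argument inverted, depending on the convolution convention). The right $\wt x$-equivariance of $\mu_S$ is inherited from $S\phi_0 \in W_x$, while the left $\wt x$-equivariance follows from $G$-equivariance of $S$ together with the fact that left translation of $\phi_0$ by $\gamma \in \Gamma_d$ produces a scalar $\wt x(\gamma)^{\pm 1}$. So $\mu_S \in \mcH'_x$, and by construction $\beta(\mu_S)$ and $S$ agree on $\phi_0$; since $\phi_0$ generates $W_x$ as an $\mcH$-module (the analog for $W_x$ of Corollary \ref{4.3}, already used in Lemma \ref{5.1}), $G$-equivariance propagates the equality to all of $W_x$. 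The main obstacle is bookkeeping the left/right conventions in this last step: getting the exact formula for $\mu_S$ so that $\beta(\mu_S)(\phi_0) = S(\phi_0)$ on the nose requires carefully tracing the integral defining $\phi_0 * \mu_S$ and exploiting that $\mu_0$ is the identity to collapse it.
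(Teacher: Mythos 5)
Your proposal is correct and follows essentially the same route as the paper: part (1) by direct computation with the normalized Haar measure on $\Gamma_d$, and part (2) by exhibiting the inverse map $S\mapsto S(\phi_0)\ch_{\Gamma_d}$ and using that $\phi_0$ generates $W_x$ to propagate agreement on $\phi_0$ to all of $W_x$. The paper states this in one line ("one checks that the compositions are the identity maps"); you have simply supplied the verification it leaves to the reader.
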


\begin{proof}
(1) is clear. 

For (2), note that the map $S\mapsto S(\phi_0)\ch_{\Gamma_d}$ defines a morphism $\ti \beta :\End_G(W_x)$ $\to \mcH '_x$. One checks that the compositions $\beta \circ \ti \beta$ and 
$\ti \beta \circ \beta$ are  the identity maps. So we can identify $\End_G(W_x)$ with $\mcH '_x$. 

\end{proof}

As follows from  from Lemma \ref{5.1} we identify the ring $\End_G(W_x)$ 
with $\End_G(V_x)$ and therefore (by Corollary \ref{4.4} ) with 
the ring $\mC [z,z^{-1}]$.

For any $n\in\mZ$ we denote by $\phi_n\in W_x$ the function supported
on $\Gamma_dt ^n\Gamma_d$ with
$$\phi_n(\gamma't ^n\gamma'')=\wt x(\gamma'\gamma''),\qquad\gamma',\,\,\,\gamma''\in\Gamma_d,$$
and write $\mu_n=\phi_ndg\in \mcH '_x$.

The following result follows from the commutativity of the algebra $\mcH '_x$ and Frobenius reciprocity.

\begin{cl}\label{6.2} 
Let $p:(\rho_x ,W_x)\to (\pi ,W)$ be an irreducible quotient of $W_x$. Then\\
$a)$ The action of $\End_G(W_x)=\wt\mcH_x$ on $W_x$ preserves $\ker (p)$
and therefore induces a homomorphism $\wt p:\mcH '_x\to\End_G(W)=\mC$.\\
$b)$ For any $\mu\in \mcH '_x$ we have
$$\wt p (\mu)=\wh\mu (\pi).$$
\end{cl}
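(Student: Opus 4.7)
The plan is to identify $V := \pi(\mu_0)W$ with the $\wt x$-isotypic subspace of $W$ under $\Gamma_d$, show it is an irreducible (hence one-dimensional, since $\mcH'_x$ is commutative) module over $\mcH'_x$, and then read off both parts of the claim via Frobenius reciprocity.

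First I would verify that $\pi(\mu_0)$ is the idempotent projection of $W$ onto $V$, a direct unwinding of $\mu_0 = \phi_0\ch_{\Gamma_d}$ together with the equivariance $l_\gamma(\mu_0) = r_\gamma(\mu_0) = \wt x(\gamma)\mu_0$. Since $\mu_0$ is the unit of $\mcH'_x$ by Lemma~\ref{6.1}(1), every $\mu \in \mcH'_x$ satisfies $\pi(\mu) = \pi(\mu_0)\pi(\mu)\pi(\mu_0)$, so $\pi(\mu)$ annihilates $\ker\pi(\mu_0)$ and maps $W$ into $V$. The crux is then showing $\dim V = 1$: the two-sided equivariance of $\mu_0$ gives the algebraic identity $\mu_0 * \mcH * \mu_0 = \mcH'_x$, and so for any non-zero $v \in V$,
\[
V = \pi(\mu_0)W = \pi(\mu_0)\pi(\mcH)v = \pi(\mu_0 * \mcH * \mu_0)v = \mcH'_x \cdot v,
\]
where the second equality uses that $v$ generates $\pi$ as a $G$-module (by irreducibility of $\pi$) and the third uses $v \in V$. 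Thus $V$ is generated as an $\mcH'_x$-module by any non-zero vector, hence is irreducible; since $\mcH'_x \simeq \mC[z, z^{-1}]$ is commutative (Corollary~\ref{4.4}), this forces $\dim V = 1$ and $\mcH'_x$ acts on $V$ through a character $\wt p : \mcH'_x \to \mC$.

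For part (a), Frobenius reciprocity identifies $\Hom_G(W_x, \pi) \cong V$ via $q \mapsto q(\phi_0)$, which is one-dimensional; hence for any $S = \beta(\mu) \in \End_G(W_x)$ the composition $p\circ S$ is a scalar multiple of $p$, necessarily equal to $\wt p(\mu)\cdot p$ by comparison at $\phi_0$, which gives $S(\ker p)\subset\ker p$ and identifies the induced scalar on $W$ as $\wt p(\mu)$. For part (b), $\pi(\mu)$ vanishes off $V$ and acts as $\wt p(\mu)$ on the one-dimensional $V$, so $\wh\mu(\pi) = \tr\pi(\mu) = \wt p(\mu)$. The main obstacle is the one-dimensionality of $V$ in the crux step above, resting on the cyclicity of any non-zero $v \in V$ inside $\pi$ --- itself rooted in the cyclicity of $\phi_0$ in $W_x$ (Corollary~\ref{4.3} via Lemma~\ref{5.1}). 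Once this is secured, both parts follow formally.
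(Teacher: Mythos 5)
Your proof is correct and follows exactly the route the paper indicates: the paper offers only the one-line justification that the claim ``follows from the commutativity of the algebra $\mcH'_x$ and Frobenius reciprocity,'' and your argument is precisely a careful elaboration of that --- commutativity (via Lemma~\ref{6.1}, Lemma~\ref{5.1}(d) and Corollary~\ref{4.4}) forcing the $\Gamma_d$-isotypic space $V=\pi(\mu_0)W$ to be one-dimensional, and Frobenius reciprocity giving $\dim\Hom_G(W_x,W)=1$ for part (a). No gaps; the identity $\mu_0*\mcH*\mu_0=\mcH'_x$ and the trace computation in (b) are exactly the details the paper leaves to the reader.
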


\begin{lem}\label{6.3} 
$a)$ The map $\mu\mapsto\wh\mu (\pi_\theta)$, $\theta\in\Theta_x$,
defines an isomorphism $\mcH '_x\to\mC [\Theta_x]$.\\
$b)$ $\wh\mu_n (\theta)=cz^n,\qquad c\neq 0$ where 
we identify the 
space $\Theta_x$ with $\mC^\times$, $\theta\mapsto z=\theta(\varpi)$.

$c)$ The set $\{\mu_n;\,n\in\mZ\}$ is a basis of the space $\mcH '_x$. 
\end{lem}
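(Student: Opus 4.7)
My plan is to prove the three parts in the order (c), then (a), then (b), since (a) and (b) build on the explicit basis established in (c).

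For (c), I would decompose $G$ into $\Gamma_d$-double cosets and ask which of them support nonzero bi-$(\Gamma_d,\wt x)$-equivariant measures. On any $\Gamma_d w \Gamma_d$ the space of such measures is at most one-dimensional, and is nonzero exactly when $\wt x(w\gamma w^{-1}) = \wt x(\gamma)$ for every $\gamma \in \Gamma_d \cap w^{-1}\Gamma_d w$. For $w = t^n$ this holds by direct computation: conjugation by $t^n$ fixes both $g_{11}$ and $\det g$, hence fixes $\wt x(g) = x(g_{11}^2/\det g)$. For cosets $\Gamma_d s t^n \Gamma_d$ with $s$ a nontrivial Weyl reflection, conjugation swaps the $(1,1)$ and $(2,2)$ entries, and the compatibility condition becomes $x^2(a/d) = 1$ for all $a,d \in \mcO^\times$ appearing as $(1,1),(2,2)$ of suitable $\delta$, forcing $x^2 = \Id$ and contradicting our standing assumption. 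So the support of any $\mu \in \mcH '_x$ lies in $\bigsqcup_n \Gamma_d t^n \Gamma_d$, and $\{\mu_n\}$ is a basis.

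For (a), I would compose the isomorphisms already available: $\mcH '_x \xrightarrow{\beta} \End_G(W_x) \simeq \End_G(V_x) \to \mC[\Theta_x]$, where the first map is Lemma \ref{6.1}(2), the second is Lemma \ref{5.1}(d), and the third is Corollary \ref{4.4}(c). To identify the composite with $\mu \mapsto [\theta \mapsto \wh\mu(\pi_\theta)]$, I would restrict to the Zariski-dense subset of $\Theta_x$ where $\pi_\theta$ is irreducible: Claim \ref{6.2}(b) says that $\mu$ acts as the scalar $\wh\mu(\pi_\theta)$ on $R_\theta$, and this is also the value at $\theta$ of the image of $\mu$ under $S \mapsto \wh S$ from Corollary \ref{4.4}. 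Two Laurent polynomials agreeing on a dense open set agree identically.

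For (b), I would compute $\tau_x(\mu_n)f_0 \in V_x$ and compare it with $\alpha(z^n)f_0$. For $n \ge 0$, one first checks that the support of $\tau_x(\mu_n)f_0$ is $\Gamma_d t^n \Gamma_d U = \Gamma_d t^n U$: using the Iwahori factorization of the right $\Gamma_d$-factor and the fact that $t^n$ contracts the lower-unipotent part of $\Gamma_d$ into $\Gamma_d$, one absorbs the middle $\Gamma_d$ into the left. Evaluating at $g = t^n$ via the left-coset decomposition $\Gamma_d t^n \Gamma_d = \bigsqcup_j \Gamma_d t^n \gamma_j$ with $\gamma_j \in U(\mcO)$, a support calculation shows the integrand $\phi_n(h)f_0(h^{-1}t^n)$ is nonzero only when $\gamma \in \Gamma_{d+n}$, and there $\wt x(\gamma)$ cancels with $\wt x$ of a conjugate after one invokes $x^2|_{U_{d(x)}} = 1$ to annihilate the correction from shifting the $(1,1)$-entry by an element of $\mathcal{P}^d$. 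Summing over the $q^n$ values of $j$ and using $[\Gamma_d : \Gamma_{d+n}] = q^n$, the evaluation produces $(\tau_x(\mu_n)f_0)(t^n)$ equal to a nonzero multiple of $\mathrm{vol}(\Gamma_d)$. Combined with left $\Gamma_d$-equivariance and right $U$-invariance of both sides, this forces $\tau_x(\mu_n)f_0 = c_n \alpha(z^n)f_0$ with $c_n \ne 0$. Applying the $G$-morphism $p_\theta$, which intertwines $\alpha(z^n)$ with multiplication by $z^n$, yields $\pi_\theta(\mu_n)f_{\theta,0} = c_n z^n f_{\theta,0}$, so $\wh{\mu_n}(\theta) = c_n z^n$. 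The case $n \le 0$ is handled by the symmetric argument with the opposite Iwahori factorization.

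The main obstacle is the integral computation in (b): identifying the effective domain $\gamma \in \Gamma_{d+n}$ and verifying the character cancellation, both of which hinge on the precise definition of the depth $d(x)$. The remaining steps are direct applications of the structural results already in hand.
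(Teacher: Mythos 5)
Your parts $a)$ and $b)$ follow essentially the paper's route: $a)$ is exactly the composition $\mcH '_x\simeq\End_G(W_x)\simeq\End_G(V_x)\simeq\mC[\Theta_x]$ from Lemma \ref{6.1}, Lemma \ref{5.1} and Corollary \ref{4.4}, matched with $\mu\mapsto\wh\mu(\pi_\theta)$ via Claim \ref{6.2} on the dense locus of irreducible $\pi_\theta$; for $b)$ the paper instead evaluates $(\pi_\theta(\mu_n)f_{\theta,0})(e)$ in the induced model and gets $cz^n$ at once, while you compute $\tau_x(\mu_n)f_0$ in the $V_x$ model and compare with $\alpha(z^n)f_0$ --- equivalent, just longer, and with the advantage that your evaluation at $t^n$ gives $c\neq 0$ directly rather than by the paper's indirect ``$\kk$ is not the zero map'' remark.

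The genuine problem is your proof of $c)$. You enumerate the $\Gamma_d$-double cosets as $\Gamma_d t^n\Gamma_d$ and $\Gamma_d st^n\Gamma_d$; that is the affine Bruhat decomposition for the Iwahori subgroup and is correct only when $d=d(x)=1$. For $d\geq 2$ there are many more double cosets: already inside $K$ the element $\left(\begin{smallmatrix}1&0\\ \varpi^j&1\end{smallmatrix}\right)$ with $1\leq j\leq d-1$ lies in neither family (its $(2,1)$-entry has valuation strictly between $0$ and $d$, whereas elements of $\Gamma_d$ have it in $\mcP^d$ and elements of $\Gamma_d s\Gamma_d$ have it a unit). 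These intermediate cosets must also be shown to carry no bi-$(\Gamma_d,\wt x)$-equivariant measure; the verification goes through (conjugating suitable upper-unipotent elements of $\Gamma_d$ across such a $w$ perturbs the $(1,1)$-entry by an element of $\mcP^m$ with $m\leq d-1$, where $x^2$ is nontrivial by the definition of $d(x)$), but it is precisely the computation your sketch omits, and it is not shorter than the rest of the lemma. Since your $a)$ and $b)$ do not use $c)$, the clean repair is the paper's: the isomorphism of $a)$ sends $\mu_n$ to $c_nz^n$ with $c_n\neq 0$ by $b)$, and $\{z^n\}$ is a basis of $\mC[z,z^{-1}]$, so $\{\mu_n\}$ is a basis of $\mcH '_x$ --- no coset enumeration needed.
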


\begin{proof} $a)$ The first part follows immediately from Lemma \ref{5.1} and Proposition \ref{6.2}.

$b)$ Let
$$R^0_\theta =\{ f\in R_\theta |\pi_\theta (\gamma)f=\wt x(\gamma ^{-1})f,\quad\gamma\in\Gamma_d\}.$$

It follows from Lemma \ref{5.1} and Claim \ref{6.2} that $\dim (R^0_\theta) =1$ and that this space is equal to $\mC\cdot f_{\theta,0}$, where $f_{\theta,0}$ was defined to be $p_\theta (f_0)$ in the proof of Corollary \ref{4.3}. Since $f_{\theta,0}(e)=1$, it is sufficient to show that
$$((\pi_{\theta }(\mu_n))(f_{\theta,0}))(e)=cz^n,$$
but this is immediate. Since the map $\mu \to \hat \mu$ is not a zero map we see that  $c \neq 0$.

The part $c)$ follows from $b)$.
\end{proof}

\section{Categories of representations} 
Fix $x\in X$. Let $\wh G_x\subset\wh G$ be the set of equivalence classes of irreducible
representations of $G$ which appear as subquotients of $(\tau_x, V_x)$. It can be described as the set of equivalence classes of irreducible subquotients of the representations
$\{ R_\theta\}$ for $\theta\in\Theta_x$. It follows from Proposition \ref{4.1} that the
set $\wh G_x$ depends only on the image of $x$ in $X/i$ and that for distinct $x',x\in X/i$ the sets $\wh G_x$ and $\wh G_{x'}$ are disjoint.

We denote by $\mcC_x\subset\mcC$ the subcategory of representations the equivalence classes of whose irreducible subquotients belong to $\wh G_x$.

The following result is well known (see \cite{BZ} for parts $a)$ and $b)$ and \cite{BDK} for part $c)$).

\begin{prop}\label{7.1} 
$a)$ We have a decomposition
$$(1) \mcC =\mcC_{\cusp}\oplus\left(\oplus_{x\in X/i}\mcC_x\right).$$
This decomposition defines the direct sum decompositions
$$(2) \mcS (G) =\mcS (G)_{\cusp}\oplus\left(\oplus_{x\in X/i}\mcS (G)_x\right)$$
and
$$(3){}\mcS ^\vee (G) =\mcS ^\vee (G)_{\cusp}\oplus\left(\oplus_{x\in X/i}\mcS ^\vee(G)_x\right)$$
and
$$(4) \mcH_G=\mcH_{G,\cusp}\oplus\left(\oplus_{x\in X/i}\mcH_{G,x}\right).$$
$b)$ For any $x\in X$, $x^2\neq\Id$, and $\mu\in\mcH_{G,x}$, the function $\wh\mu$ is supported on $\Theta_x$, and the map $\kk :\mu\mapsto\wh\mu$
defines an isomorphism from $\mcH_{G,x}$ to $\mC[\Theta_x]$.\\
$c)$ For any $x\in X_2$, $\mu\in\mcH_{G,x}$, the function $\wh\mu$ is supported on
$$(\Theta_x/i)\cup\left(\cup_{\theta\in\Theta_{2,x}}\St_\theta\right),$$
and the map $\kk :\mu\mapsto\wh\mu$
defines an isomorphism from $\mcH_{G,x}$ to
$$\mC[\Theta_x/i]\oplus\left(\oplus_{\theta\in\Theta_{2,x}}\mC_\theta\right).$$
\end{prop}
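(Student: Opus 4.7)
The plan is to treat the three parts separately: part $a)$ by invoking the Bernstein--Zelevinsky block decomposition, part $b)$ by reading the answer off the type-theoretic analysis of Sections $5$--$6$, and part $c)$ by appealing to the Bernstein--Deligne--Kazhdan trace Paley--Wiener theorem to handle the reducible principal series.

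For part $a)$, I cite \cite{BZ}. The essential input is Proposition \ref{4.1}: every irreducible smooth representation of $G$ is either cuspidal or a subquotient of some $\pi_\theta$, and $\pi_\theta$, $\pi_{\theta'}$ share an irreducible subquotient only when $\theta'\in\{\theta,\theta^{-1}\}\cdot\Theta_{un}$, i.e.\ when $\theta|_{\mcO^\times}$ and $\theta'|_{\mcO^\times}$ give the same class in $X/i$. This yields the block decomposition $(1)$. The family of central idempotents in the Bernstein center cut out by these blocks acts on $\mcS(G)\simeq\mcH$ by convolution, giving $(2)$ directly and $(3)$ by duality; passing to $G$-coinvariants of $\mcH$ yields $(4)$.

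For part $b)$, fix $x\in X$ with $x^2\ne\Id$, so $d=d(x)>0$. Proposition \ref{4.1} shows that $\pi_\theta$ is irreducible for every $\theta\in\Theta_x$, that $\pi_\theta\cong\pi_{\theta^{-1}}$ identifies $\Theta_x$ with the disjoint set $\Theta_{x^{-1}}$, and hence that $\hat G_x$ is parametrized by $\Theta_x$ with no residual involution. So $\hat\mu$ is supported on $\Theta_x$ and regular there. To upgrade this to an isomorphism $\mcH_{G,x}\simeq\mC[\Theta_x]$ I use the type structure developed in Sections $5$--$6$: by Corollary \ref{4.3} and Lemma \ref{5.1} the pair $(\Gamma_d,\wt x)$ is a type for the block $\mcC_x$, and Lemmas \ref{6.1}, \ref{6.3} identify its Hecke algebra $\mcH '_x$ with $\mC[\Theta_x]$ under $\kk$. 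Since $\mcH '_x$ is commutative its cocenter equals itself, and the Morita equivalence $\mcC_x\simeq\mcH '_x\text{-Mod}$ furnished by the type transports this to a canonical isomorphism $\mcH_{G,x}\simeq\mcH '_x\simeq\mC[\Theta_x]$, which unwinds to $\kk$ by Claim \ref{6.2}.

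For part $c)$, fix $x\in X_2$, so $d(x)=0$ and $\Gamma_d=K$. Now $\hat G_x$ consists of the irreducible $\pi_\theta$ with $\theta\in(\Theta_x\setminus\Theta_{2,x})/i$ together with, for each $\theta\in\Theta_{2,x}$, the two composition factors $\mC_\theta$ and $\St_\theta$ of the reducible $\pi_\theta$. The identity $\hat\mu(\pi_\theta)=\hat\mu(\mC_\theta)+\hat\mu(\St_\theta)$ for $\theta\in\Theta_{2,x}$ expresses $\hat\mu(\mC_\theta)$ in terms of the remaining data; what survives as independent information is the regular $i$-invariant function $\theta\mapsto\hat\mu(\pi_\theta)\in\mC[\Theta_x/i]$ together with the free scalars $\hat\mu(\St_\theta)\in\mC$, matching the stated target $\mC[\Theta_x/i]\oplus\left(\oplus_{\theta\in\Theta_{2,x}}\mC_\theta\right)$. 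Bijectivity of $\kk$ onto this target is the content of the trace Paley--Wiener theorem of \cite{BDK} applied to $\mcC_x$, which I invoke. This last step is the main obstacle: the block $\mcC_x$ with $x^2=\Id$ lies outside the clean calculation of Sections $5$--$6$ (which required $d(x)>0$), the Steinberg composition factor has no $K$-fixed vector transforming by $\wt x$, and one must rule out any relation among characters on $\hat G_x$ beyond the tautological one above. A self-contained derivation would need to replace the type of Section $5$ by the spherical Hecke algebra for $\wt x$ on $K$ and separately account for the Steinberg summands, which is precisely what the BDK theorem subsumes.
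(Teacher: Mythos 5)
The paper gives no proof of this proposition beyond citing \cite{BZ} for parts a), b) and \cite{BDK} for part c), and your proposal follows exactly that route, usefully fleshing out part b) with the type/Hecke-algebra machinery the paper itself develops in Sections 5--6 (Corollary \ref{4.4}, Lemmas \ref{5.1}, \ref{6.1}, \ref{6.3}). One small slip in part c): for $\theta\in\Theta_{2,x}$ the representation $\pi_\theta$ is irreducible (tempered); the reducible member of the family is $\pi_{\theta\|\cdot\|^{1/2}}$, so the trace identity should read $\wh\mu(\pi_{\theta\|\cdot\|^{1/2}})=\wh\mu(\mC_\theta)+\wh\mu(\St_\theta)$ --- harmless, since $\|\cdot\|^{1/2}$ is unramified and the reducibility point still lies in $\Theta_x$.
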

\begin{lem}\label{7.2} 
$a)$ For $x\in X-X_2$ we have $\mcH '_x\subset\mcH_x$.

$b)$ The  map $\mcH 'x\to \mcH _{G,x}$ is an isomorphism.
\end{lem}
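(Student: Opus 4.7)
The plan has two parts: (a) is a block-separation argument showing that $\mu\in\mcH'_x$ kills every smooth representation lying outside the Bernstein block $\mcC_x$, while (b) follows formally by comparing two compatible isomorphisms to $\mC[\Theta_x]$ already produced in Lemma~\ref{6.3}(a) and Proposition~\ref{7.1}(b).

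For part (a), the key observation is that for any smooth $(\pi,V)\in\mcC$, any $\mu\in\mcH'_x$, and any $\gamma\in\Gamma_d$, the left-equivariance $l_\gamma(\mu)=\wt x(\gamma)\mu$ yields by a direct change of variables the identity
\[\pi(\gamma)\circ\pi(\mu)=\wt x(\gamma)\pi(\mu),\]
so the image of $\pi(\mu)$ is contained in the $(\Gamma_d,\wt x)$-isotypic subspace $V^{(\Gamma_d,\wt x)}=\{v\in V:\pi(\gamma)v=\wt x(\gamma)v,\,\gamma\in\Gamma_d\}$. By Frobenius reciprocity for the compact induction $W_x\simeq\ind_{\Gamma_d}^G\wt x$, this subspace is canonically identified with $\Hom_G(W_{x^{-1}},V)$, and since $\wh G_x=\wh G_{x^{-1}}$ gives $\mcC_x=\mcC_{x^{-1}}$, the precise convention is immaterial for what follows. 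By Lemma~\ref{5.1}(c) we have $W_x\simeq V_x$, and by the very definition of $\wh G_x$ every irreducible subquotient of $V_x$ lies in $\wh G_x$; hence $W_x$ itself lies in $\mcC_x$. For any $V$ in a different block of the decomposition of Proposition~\ref{7.1}(a)---that is, $V\in\mcC_y$ with $y\in X/i$, $y\neq x$, or $V\in\mcC_{\cusp}$---there are no nonzero $G$-morphisms from $W_x$ (equivalently from $W_{x^{-1}}$) to $V$, whence $\pi(\mu)=0$. This is precisely the condition characterizing $\mu$ as an element of the block ideal $\mcH_x\subset\mcH$.

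For part (b), with part (a) established the map in question factors as $\mcH'_x\hookrightarrow\mcH_x\twoheadrightarrow\mcH_{G,x}$. Post-composing with the isomorphism $\mcH_{G,x}\xrightarrow{\sim}\mC[\Theta_x]$, $\mu\mapsto\wh\mu$, of Proposition~\ref{7.1}(b) converts this composite into the map $\mu\mapsto(\theta\mapsto\wh\mu(\pi_\theta))$, which is exactly the isomorphism $\mcH'_x\xrightarrow{\sim}\mC[\Theta_x]$ of Lemma~\ref{6.3}(a). Since the last arrow and the full composite are both isomorphisms, the first arrow $\mcH'_x\to\mcH_{G,x}$ is an isomorphism too.

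The main obstacle I anticipate is the Frobenius step in (a): one must translate the $\wt x$-equivariance of measures in $\mcH'_x$ into a concrete vanishing statement about $\Hom_G(W_x,V)$, and then invoke the block structure of Proposition~\ref{7.1}(a) to conclude. Once that identification is in place, part (b) is essentially formal bookkeeping using the $\mu\mapsto\wh\mu$ identifications already set up.
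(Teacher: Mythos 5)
Your proof is correct and follows essentially the same route as the paper: part (a) is the same Frobenius-reciprocity argument (the $\wt x$-equivariance forces the image of $\pi(\mu)$ into the $(\Gamma_{d},\wt x)$-isotypic subspace, identified with $\Hom_G(W_x,V)$, so only quotients of $W_x\simeq V_x$ survive), and your handling of the $x$ versus $x^{-1}$ convention is if anything more careful than the paper's. For part (b) the paper simply cites the trace Paley--Wiener theorem of [BDK]; your explicit factorization through the two isomorphisms $\mcH'_x\simeq\mC[\Theta_x]$ (Lemma~\ref{6.3}) and $\mcH_{G,x}\simeq\mC[\Theta_x]$ (Proposition~\ref{7.1}(b)) spells out the same deduction and is a valid, slightly more self-contained way to conclude.
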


\begin{proof}
$a)$ Let $(\pi ,V)$ be an irreducible representation such that $\pi (\mu)\neq 0$ for some
$\mu\in\wt\mcH_x$. We want to show that $\pi\in\wh G_x$.

Since $\pi (\mu)\neq 0$ we see that $V^0\neq\{ 0\}$, where
$$V^0=\{ v\in V|\pi (\gamma)v=\wt x(\gamma ^{-1})v,\quad\gamma\in\Gamma_d\}.$$

Since $V|\Gamma_d=V^0\neq\{ 0\}$,

By the Frobenius reciprocity we have
$$\Hom_G(W_x,V)=\Hom_{\Gamma_d}(\wt{x},V)=V_0$$. Therefore
 $V$ is a quotient of $W_x$. So the lemma follows from Lemma \ref{5.1} c) which asserts the equivalence of the representations $V_x$ and $W_x$ of $G$.

$b)$ follows now from \cite{BDK}.
\end{proof}

\begin{cor}\label{7.3} 
Let $\bO \subset G$ be a regular elliptic conjugacy class
and  $x\in X$ be such that  $d(x)>d(\bO)$. Then  $I_\bO (\mu)=0$ for  any $\mu =fdg \in \mcH '_x$ where

$$I_\bO (fdg)=\int_G f(ghg^{-1})dg,\qquad h\in\Omega.$$

\end{cor}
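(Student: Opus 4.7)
The plan is to use the explicit basis $\{\mu_n : n \in \mZ\}$ of $\mcH '_x$ from Lemma~\ref{6.3}(c) and verify $I_\bO(\mu_n) = 0$ for each $n$, then conclude by linearity. Since $\mu_n = \phi_n dg$ with $\phi_n$ supported on the double coset $\Gamma_d t^n \Gamma_d$ (where $d = d(x)$), we have
\[
I_\bO(\mu_n) = \int_G \phi_n(g h g^{-1})\,dg,
\]
so the corollary reduces to the geometric claim that $\bO \cap \Gamma_d t^n \Gamma_d = \emptyset$ for every $n \in \mZ$ under the hypothesis $d > d(\bO)$.

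For $n = 0$ this is immediate from the definition of $d(\bO)$, since $\Gamma_d t^0 \Gamma_d = \Gamma_d$. For $n \neq 0$ I would first conjugate by $\gamma_1^{-1}$ to replace any putative $h' = \gamma_1 t^n \gamma_2 \in \bO \cap \Gamma_d t^n \Gamma_d$ by $t^n(\gamma_2\gamma_1) \in \bO \cap t^n\Gamma_d$, reducing the task to showing that $t^n \Gamma_d$ contains no regular elliptic element when $n \neq 0$ and $d \geq 1$.

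To establish this, I would lift to $GL(2,F)$ and write $\gamma = \begin{pmatrix} a & b \\ c & d_\gamma \end{pmatrix} \in \Gamma_d$, so that $a, d_\gamma \in \mcO^\times$, $b \in \mcO$, $c \in \cal P^d$. A direct computation gives, for $n > 0$,
\[
\disc(t^n\gamma) = (a+\varpi^n d_\gamma)^2 - 4\varpi^n(a d_\gamma - bc) = (a-\varpi^n d_\gamma)^2 + 4\varpi^n bc,
\]
with the analogous formula (roles of $a$ and $d_\gamma$ swapped) when $n < 0$, obtained after rescaling the lift by $\varpi^{-n}$. The leading term is the square of a unit, while the perturbation $4\varpi^n bc$ has valuation at least $2\val(2) + |n| + d$, which strictly exceeds $2\val(2)$ since $|n|+d \geq 1$. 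Hensel's lemma then forces $\disc(t^n\gamma) \in (F^\times)^2$, so the characteristic polynomial splits over $F$ and $t^n\gamma$ is not regular elliptic.

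The main obstacle is this Hensel step for $n \neq 0$, particularly in residue characteristic $2$: one must carefully account for the contribution of the factor $4$ (valuation $2\val(2)$) in order to place the perturbation strictly above the Hensel threshold for squares. With that calculation in hand, linearity of $I_\bO$ on the basis $\{\mu_n\}$ concludes the proof.
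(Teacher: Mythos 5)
Your proof is correct and follows the same route as the paper: reduce to the basis $\{\mu_n\}$ of $\mcH'_x$ via Lemma \ref{6.3}, handle $n=0$ by the definition of $d(\bO)$, and observe that for $n\neq 0$ the double coset $\Gamma_d t^n\Gamma_d$ contains no elliptic elements. The only difference is that the paper simply asserts that all elements of $\Gamma_d t^n\Gamma_d$ are split for $n\neq 0$, whereas you supply the discriminant computation and the Hensel argument (valid in residue characteristic $2$ as well), which is a welcome addition rather than a deviation.
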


\begin{proof} Since $d=d(x)>0$ it follows from  Lemma \ref{6.3} b) that the set $\{\mu_n=\phi _ndg;\,n\in\mZ\}$ is a basis of the space $\wt\mcH_x$ which (by Lemma \ref{6.3} a) and Proposition \ref{7.1} b)) is isomorphic to $\mcH_{G,x}$. So it suffices to check that  $\int_G\phi_n(ghg^{-1})dg=0$ for all $n\in\mZ$. If $n\not =0$ 
then all  elements of $\Gamma_dt^n\Gamma _d$ are split and so the support of $\phi _n$ is disjoint from $\bO$. On the other hand if $n=0$ then $\Omega\cap\Gamma_d=\emptyset$ by definition of $d(\bO)$.
\end{proof}

\section{The Plancherel formula} 
Consider the distribution  $\delta $ onn $\mcS (G) ,\delta (f):=f(e)$. The part $a(1)$ of Proposition \ref{7.1} implies the decomposition
$$\delta =\delta _{cusp}+ \sum _{x\in X/i}\delta _x$$
where 
$$\delta _{cusp}\in \mcS ^\vee (G)_{\cusp},
 \delta _c\in \mcS ^\vee (G)_x$$

The {\it Plancherel fomula} (see \cite{AP}) describes the functionals $\delta _x$ and $\delta _{cusp}$. Let $S^1=\{ z\in Z| \|z \| =1\}$ and $|dz|$ the Haar measure on $S^1$ such that $\int _{S^1}|dz|=1$. I will use notations of the section $1$ and in particular the identification $z\to x^z\in \Theta _x$ of
$\mC ^\times$ with  $\Theta _x$.
\begin{prop}\label{8.1} 
$a)$ $\delta _{cusp} =\sum _{\pi \in \hat G_{cusp}}d(\pi ,dg)chi _\pi$.

$b)$ If $x^2=Id$ then
$$\delta _x=
\sum _{\theta \in \Theta _{2,x}}\chi _{St_\theta} +\int _{z\in S^1}\frac {|(z-1)(z^{-1}-1)|^2}{|(z/q-1)(z^{-1}/q-1)|^2}\chi _{x^z}|dz|$$
$c)$ If  $x^2\neq Id$ then
$$\delta _x=\int _{z\in S^1}\gamma (x)
\chi _{x^z}|dz|$$
where $\gamma (x)$ are explicit constants (see \cite{AP}).
\end{prop}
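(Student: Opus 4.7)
The plan is to derive the three formulas by combining the general Harish-Chandra Plancherel expansion of $\delta$ with the Bernstein decomposition (Proposition \ref{7.1}), and then identifying how each tempered equivalence class is distributed among the blocks $\mcS^\vee(G)_{\cusp}$ and $\mcS^\vee(G)_x$.

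First, I would invoke the Plancherel formula for $G=\PGL(2,F)$ in the following form: for every $\mu=fdg\in\mcH$,
\[
f(e)=\sum_{\pi\in\hat G_{\cusp}}d(\pi,dg)\,\hat\mu(\pi)\;+\;\sum_{\theta_2\in\Theta_2}\hat\mu(\St_{\theta_2})\;+\;\sum_{x\in X/i}\int_{\Theta_{x,\mathrm{un}}/i}\hat\mu(\pi_\theta)\,\nu_x(\theta),
\]
where $\Theta_{x,\mathrm{un}}$ denotes the unitary characters in $\Theta_x$ and $\nu_x$ is the Plancherel measure on that component. The cuspidal part follows directly from Claim \ref{3.1}\,(c) (matrix coefficients of cuspidal representations have compact support, so the orthogonality relations produce $\delta_{\cusp}=\sum d(\pi)\chi_\pi$ as a genuine sum); the Steinberg representations $\St_{\theta_2}$ are square-integrable with formal degree $1$ by our normalization (Claim \ref{3.1}\,(b)), so they contribute discretely with weight $1$; and the remaining mass is supported on the unitary principal series and must be computed component by component.

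Next, applying Proposition \ref{7.1} I would decompose both sides of this identity according to the splitting $\mcS^\vee(G)=\mcS^\vee(G)_{\cusp}\oplus\bigoplus_{x\in X/i}\mcS^\vee(G)_x$, using that each tempered class appears in exactly one summand. The cuspidal contribution collapses into $\delta_{\cusp}$, giving (a). For $x^2\neq\Id$ only the principal series $\{\pi_\theta\}_{\theta\in\Theta_{x,\mathrm{un}}/i}$ lie in $\hat G_x$ (by Proposition \ref{4.1}\,(b), reducibility never occurs for such $x$); after parameterizing $\Theta_x\simeq\mC^\times$ by $\theta\mapsto z=\theta(\varpi)$ and restricting to $S^1$, the Plancherel density is constant in $z$ (no pole can occur because $\pi_\theta$ is irreducible and unitary on the circle), yielding (c). For $x^2=\Id$ both Steinberg twists $\St_{\theta}$, $\theta\in\Theta_{2,x}$, together with the unitary principal series $\{\pi_{x^z}\}_{z\in S^1}$ belong to $\hat G_x$, giving the hybrid formula in (b).

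The only remaining point is to identify the explicit Plancherel density in case (b), namely $|(z-1)(z^{-1}-1)|^2/|(z/q-1)(z^{-1}/q-1)|^2$. The cleanest route is via Macdonald's formula for the spherical function: the density is $|c(\theta)|^{-2}$ where $c(\theta)$ is the Harish-Chandra $c$-function, and for $\PGL(2,F)$ the unramified $c$-function is $(1-q^{-1}z^{-1})/(1-z^{-2})$, giving precisely the stated ratio up to the total-mass normalization fixed by the requirement that $\deg(\St,dg)=1$. Equivalently one can compute this density directly from the poles of the standard intertwining operator $R_\theta\to R_{\theta^{-1}}$, whose normalizing $L$-factors produce the $(1-z/q)(1-z^{-1}/q)$ in the denominator and whose Plancherel-Tamagawa correction produces the $(1-z)(1-z^{-1})$ in the numerator.

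The main obstacle is purely bookkeeping: matching the normalization fixed in Claim \ref{3.1}\,(b) (i.e.\ $\deg(\St,dg)=1$) with the normalization used in the reference \cite{AP}, so that both the discrete masses of the $\St_\theta$ and the continuous density on $\Theta_{x,\mathrm{un}}$ come out with the correct constants. Once that normalization is pinned down (using $\deg(\St,dg)=1$ to fix the overall scale of $dg$ and hence of $\nu_x$), parts (a), (b), (c) drop out as the decomposition of the single Plancherel identity above by Bernstein block.
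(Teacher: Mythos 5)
The paper offers no proof of Proposition~\ref{8.1} at all: it is quoted from Aubert--Plymen \cite{AP} ("The Plancherel formula (see \cite{AP}) describes the functionals $\delta_x$ and $\delta_{cusp}$"), so there is no in-text argument to measure your proposal against. Your outline is the standard derivation of the cited result, and its structure is sound: part (a) does follow from Schur orthogonality for the compactly supported matrix coefficients of cuspidal representations together with Claim~\ref{3.1}; the sorting of the tempered dual into Bernstein blocks via Propositions~\ref{4.1} and~\ref{7.1} is correct (for $x^2\neq\Id$ the block $\hat G_x$ meets the tempered dual only in irreducible unitary principal series, while for $x^2=\Id$ it also contains the two twists $\St_\theta$, $\theta\in\Theta_{2,x}$, each of formal degree $1$ under the normalization of Claim~\ref{3.1}(b)); and the continuous density is indeed $|c|^{-2}$, constant on ramified components, which gives (c).

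Two caveats. First, the only genuinely non-formal content of the proposition is the explicit density in case (b), and that is exactly the step you defer ("the cleanest route is via Macdonald's formula \dots") rather than carry out; as a self-contained proof this is a citation of \cite{AP} by another name, which is acceptable here only because the paper itself does the same. Second, before claiming that the $c$-function computation "reproduces the stated ratio," check the exponent: on $|z|=1$ one has $(z-1)(z^{-1}-1)=|1-z|^2$ and $(z/q-1)(z^{-1}/q-1)=|1-z/q|^2$, so the displayed density $|(z-1)(z^{-1}-1)|^2/|(z/q-1)(z^{-1}/q-1)|^2$ equals $|1-z|^4/|1-z/q|^4$, the \emph{square} of the usual Plancherel density $|1-z|^2/|1-q^{-1}z|^2$ that your $c$-function argument produces. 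Most likely the outer $|\cdot|^2$ in the paper is redundant notation for the modulus-squared already expressed by the product of the two factors, but your write-up should resolve this normalization explicitly rather than assert agreement.
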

Let $dt$ be the Haar measure on $A$ such that $\int _{A(\mcO )}dt=1$ and $dg/dt$ the corresponding $G$-invariant measure on $G/A$. For any $s\in F-\{ 0,1\}$ we write
$$a_s=\begin{pmatrix}s&0\\0&1\end{pmatrix}\in A$$

and  denote by $\bo _s$ the functional  on $\mcS (G)$ given by
$$\bo _s(f)=|s-1|\int _{\ti g\in G/A}f(\ti ga_z\ti g^{-1})dg/dt$$

Let $\mcU\subset G$ be the set of
regular unipotent elements. Since $G$ acts transitively on $\mcU$
and the stationary subgroup is unimodular (it actually is isomorphic to $U\subset G$), there exists a unique (up to a scalar) $G$-invariant measure $\nu$ on $\mcU$.

The following claim is well known and is an easy exircise.

\begin{cl}\label{8.2} 
$a)$ For any $f\in \mcS (G)$ the integral
$$\bo (f):=\int_{G/U}f(g\pmatrix 1&1\\ 0&1\endpmatrix g^{-1})\ov {dg}$$

is absolutely convergent.

$b)$  One can choose a $G$-invariant measure $nu$ on $\mcU$ in such a way that $\bo (f) \equiv \lim _{s\to 1}\bo _s(f)$
\end{cl}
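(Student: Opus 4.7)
I use the Iwasawa decomposition $G=KTU$ to identify $G/U$ (up to the compact action of $T(\mcO)$) with $K\times T$. For a representative $g=kt$ we have
\[ gu_0g^{-1}=k(tu_0t^{-1})k^{-1}=ku_sk^{-1},\qquad u_s=\pmat{1}{s}{0}{1}, \]
where $s\in F^\times$ parametrizes $T\cong F^\times$. The $G$-invariant quotient measure on $G/U$ pulls back in these coordinates to $dk\otimes|s|\,d^\times s$, the factor $|s|$ coming from the modular character of the Borel (this is the sign that reconciles Haar on $G$ with the Iwasawa factorization and makes the unipotent orbital integral converge). Since $\supp f$ is compact, the off-diagonal entry $s$ of $u_s$ must be bounded, $|s|\le C$ (otherwise $u_s$ escapes any compact set), and on this range $\int|s|\,d^\times s=\sum_{n\ge 0}\min(q^{-n},C)$ is finite. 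Combined with $\|f\|_\infty<\infty$ this gives absolute convergence.

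\textbf{Part (b).} For the comparison with $\bo_s(f)$ as $s\to 1$, I parametrize the big Bruhat cell of $G/A$ (open dense of full measure) by $(x,y)\in F^2$ via $g=\bar u_y u_x$. In these coordinates the invariant measure $dg/dt$ equals $dx\,dy$ up to a universal constant (which I absorb into $\nu$). A direct matrix calculation yields the exact identity
\[ ga_sg^{-1}=I+(s-1)N(x,y),\qquad N(x,y)=\pmat{1+xy}{-x}{y(1+xy)}{-xy}, \]
and one checks $\tr N=1$, $\det N=0$, consistent with $ga_sg^{-1}$ having trace $1+s$ and determinant $s$. Substituting $u=(s-1)x$ (keeping $y$ fixed), the Jacobian is $|s-1|$, so the $|s-1|$ prefactor cancels exactly:
\[ \bo_s(f)=\int_{F^2}f\!\left(I+\pmat{(s-1)+uy}{-u}{(s-1)y+uy^2}{-uy}\right)du\,dy. \]
Compact support of $f$ confines the effective region of integration to a fixed bounded subset of $F^2$ independent of $s$ near $1$, so dominated convergence gives the pointwise limit
\[ \lim_{s\to 1}\bo_s(f)=\int_{F^2}f(M(u,y))\,du\,dy,\qquad M(u,y):=I+\pmat{uy}{-u}{uy^2}{-uy}. \]
One checks directly $(M-I)^2=0$, so $M(u,y)\in\mcU$ for $u\ne 0$ and $M(0,y)=I$. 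The limit is a $G$-invariant distribution on $\mcS(G)$ (inherited from the $G$-invariance of each $\bo_s$), supported on $\mcU\cup\{e\}$; since the fiber over $\{e\}$ is the measure-zero locus $\{u=0\}$, no $\delta_e$-component appears, and by uniqueness of $G$-invariant measures on $\mcU$ this limit equals $c\cdot\bo(f)$ for some $c>0$. Choosing $\nu$ to absorb $c$ completes the proof.

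\textbf{Main obstacle.} The technical heart of Part (b) is the dominated convergence step: one must produce a uniform integrable bound on the integrand as $s\to 1$. Compact support of $f$ forces each matrix entry bounded, which confines $|u|$, $|uy|$, $|uy^2|$ to a constant depending only on $f$; a short calculation (using $\int_{|y|>1}dy/|y|^2<\infty$ in non-Archimedean Haar) shows the resulting region of $(u,y)$ has finite $du\,dy$-measure uniformly in $s$, justifying the interchange of limit and integral.
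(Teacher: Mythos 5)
The paper offers no proof of this claim (it is dismissed as ``well known and an easy exercise''), so there is nothing to compare against; your architecture --- Iwasawa coordinates and the modular factor $|s|\,d^\times s$ for part (a), the big-cell parametrization $g=\bar u_y u_x$ of $G/\Tsp$ with quotient measure $dx\,dy$, the exact identity $ga_sg^{-1}=I+(s-1)N(x,y)$, the substitution $u=(s-1)x$ cancelling the prefactor, and the uniqueness of the invariant measure on $\mcU\cong G/U$ for part (b) --- is sound, and all the matrix and measure computations check out.

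There is, however, a genuine soft spot in the limit interchange. Your assertion that ``compact support of $f$ confines the effective region of integration to a fixed bounded subset of $F^2$ independent of $s$'' is false. The support condition on the $(2,1)$-entry reads $|(s-1)y+uy^2|\le C$, not $|uy^2|\le C$: for $|y|$ large the admissible $u$ form a ball of radius $C/|y|^2$ \emph{centered at} $-(s-1)/y$, so the effective region is unbounded in $y$ and moves with $s$. Worse, the union of these regions over $|s-1|\le\epsilon$ has $u$-slices of measure $\approx \epsilon/|y|$ for $|y|$ large, and $\int_{|y|>R}\epsilon\,dy/|y|$ diverges; hence the obvious dominating function $\|f\|_\infty\cdot\mathbf 1_{\bigcup_s(\text{region}_s)}$ is not integrable and dominated convergence cannot be invoked as stated. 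The repair uses exactly the estimate you already wrote down, applied for each $s$ separately: for every $s$ with $|s-1|\le\epsilon$ the contribution of $\{|y|>R'\}$ to $\bo_s(f)$ is bounded by $\|f\|_\infty\int_{|y|>R'}cC|y|^{-2}\,dy$, which tends to $0$ as $R'\to\infty$ \emph{uniformly in} $s$; on the remaining set $\{|y|\le R'\}$ the support condition does force $|u|\le C$, the perturbation $A_s-A_1=(s-1)\left(\begin{smallmatrix}1&0\\ y&0\end{smallmatrix}\right)$ tends to $0$ uniformly, and local constancy of $f$ makes the two integrands eventually equal. This ``uniform tail plus local uniform convergence'' argument (rather than dominated convergence per se) closes the gap; the rest of your proof, including the exclusion of a $\delta_e$-component and the appeal to uniqueness of the invariant measure on $\mcU$, then goes through.
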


We define
Let $\bo _{s,x}$ be the components of $\bo$ in the decomposition of Proposition \ref{7.1}

The following claim follows from Proposition \ref{4.1} $f)$ and Claim \ref{8.2}.
\begin{lem}\label{8.3} 
$a)$ $\bo _{s,\pi}=\bo _\pi =0$ for all cuspidal representations $\pi$.

$b)$ $\bo _{s,x}=\int _{z\in S^1}\chi _{x^z}x(s)|dz|$  and
 $\bo _x=\int _{z\in S^1}\chi _{x^z}|dz|$ for all $x\in X$.

\end{lem}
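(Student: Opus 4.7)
The plan is to verify each identity by evaluating on a spanning family of the relevant Bernstein component, using the principal series character formula of Proposition~\ref{4.1}(f), the Weyl integration formula, and Fourier analysis on $F^\times$.

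For part (a), fix a supercuspidal $\pi$. The component $\mcH_{G,\pi}$ is one-dimensional and is realized via a matrix coefficient $m_v\,dg$ of $\pi$. Two classical facts combine: $O_{a_s}(m_v)=d(\pi)^{-1}\chi_\pi(a_s)$ (Harish-Chandra, the integral formula underlying Claim~\ref{3.1}(c)), and $\chi_\pi$ vanishes on split regular semisimple elements for supercuspidal $\pi$ (a standard consequence of $J(V_\pi)=\{0\}$). Together these give $\bo_s(m_v\,dg)=|s-1|O_{a_s}(m_v)=0$, so $\bo_{s,\pi}=0$. Passing to the limit $s\to 1$ via Claim~\ref{8.2}(b) yields $\bo_\pi=0$.

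For part (b) with $x^2\neq\Id$, Lemmas~\ref{6.3}(c) and~\ref{7.2}(b) provide the basis $\{\mu_n\}_{n\in\mZ}$ of $\mcH_{G,x}$, so it suffices to verify the identity on each $\mu_n$. The right-hand side equals $c\,x(s)\,\delta_{n,0}$ by Lemma~\ref{6.3}(b) and the orthogonality $\int_{S^1}z^n|dz|=\delta_{n,0}$. For $n\neq 0$, $\mu_n$ is supported in $\Gamma_dt^n\Gamma_d\subset Kt^nK$, while every $G$-conjugate of $a_s$ (for $s\in\mcO^\times$) lies in $K=Kt^0K$ by the Cartan decomposition; Claim~\ref{2.1} then forces $\bo_s(\mu_n)=0$. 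For $n=0$, the Weyl integration formula combined with Proposition~\ref{4.1}(f) gives
\[
\chi_{x^z}(\mu_0)=\tfrac12\int_{F^\times}\bigl(x^z(s)+x^z(s^{-1})\bigr)|s|^{-1/2}\bo_s(\mu_0)\,d^\times s,
\]
the elliptic contribution vanishing because $\chi_{x^z}$ is supported on split elements. The support of $\mu_0$ restricts the integral to $s\in\mcO^\times$; substituting $x^z(s)=x(s)z^{\val s}$, Fourier inversion on $\mcO^\times\times\mZ$ (together with the symmetry $\bo_s=\bo_{s^{-1}}$ on units) forces $\bo_s(\mu_0)=c\,x(s)$, matching the right-hand side.

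For $x^2=\Id$ the continuous principal series part of $\mcH_{G,x}$ is handled by the same argument, noting that the reducibility points lie at $z=\pm q^{\pm 1/2}$, off the unit circle $S^1$. The Steinberg summands of $\mcH_{G,x}$ (cf.\ Proposition~\ref{7.1}(c)) require extra care, either by a limiting interpretation near the reducibility loci or by separate verification of the contribution of Steinberg pseudocoefficients to $\bo_s$. The formula for $\bo_x$ then follows from $\bo_x=\lim_{s\to 1}\bo_{s,x}$ via Claim~\ref{8.2}(b) and the identity $x(1)=1$. The main obstacle is to track Haar-measure normalizations on $F^\times$, $\mcO^\times$, $G/A$, $\Gamma_d$, and $S^1$ so that the constant $c$ in the Fourier inversion matches the normalization $\chi_{x^z}(\mu_0)=c=1$ implicit in Lemma~\ref{6.3}(b); secondarily, the $x^2=\Id$ case must be treated carefully since the stated integral over $S^1$ does not manifestly capture the discrete Steinberg pieces of $\mcH_{G,x}$.
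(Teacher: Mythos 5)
Your overall strategy---reduce to the basis $\{\mu_n\}$ of $\mcH_{G,x}$, combine Proposition \ref{4.1}(f) with the Weyl integration formula and Fourier inversion on $F^\times$, and pass to the limit $s\to1$ via Claim \ref{8.2}---is exactly what the paper intends: its entire proof is the one-line assertion that the lemma follows from Proposition \ref{4.1}(f) and Claim \ref{8.2}. But two of your supporting claims are false as stated, and each is load-bearing. In part (a) you assert that $\chi_\pi$ vanishes on all split regular semisimple elements when $\pi$ is supercuspidal, as a consequence of $J(V)=\{0\}$. Casselman's theorem only gives vanishing of $\chi_\pi(a_s)$ when $\|s\|$ is sufficiently large or small (depending on $\pi$); on the unit part of the split torus supercuspidal characters are in general nonzero (this is visible in the local character expansion and explicitly in \cite{ss84}). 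Relatedly, $O_{a_s}(m_v)=d(\pi)^{-1}\chi_\pi(a_s)$ is Harish-Chandra's identity for \emph{elliptic} elements; for split $a_s$ the correct and sufficient statement is the Selberg principle: non-elliptic orbital integrals of a compactly supported cusp form vanish, which directly gives $\bo_s(m_v\,dg)=0$ and hence part (a).

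In part (b), for $n\neq0$ you claim that every $G$-conjugate of $a_s$ with $s\in\mcO^\times$ lies in $K$. This is false: a split regular conjugacy class is never contained in a compact set (for instance $g_ua_sg_u^{-1}$ has upper-right entry $u(1-s)$, unbounded in $u$), and conjugates of $a_s$ do meet $Kt^{2k}K$ for every $k$. The correct support argument is the one implicit in the proof of Corollary \ref{7.3}: a lift of an element of $\Gamma_dt^n\Gamma_d$ has unit trace and determinant of valuation $n$, so by the Newton polygon its eigenvalue ratio has valuation $\pm n\neq0$, and it cannot be conjugate to $a_s$ with $s$ a unit. For $n=0$ your Fourier inversion, carried out honestly, returns the symmetrized quantity proportional to $x(s)+x(s)^{-1}$, since both $\Theta_x$ and $\Theta_{x^{-1}}$ carry the component $\mcH_{G,x}$; this needs to be reconciled with the stated right-hand side rather than asserted. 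Finally, you explicitly leave open the $x^2=\Id$ case (the Steinberg summands) and the normalization of the constant $c$; as written these are acknowledged gaps, not proved steps.
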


\section{Proof of Theorem \ref{1.1}} 
The proof of Theorem \ref{1.1} using results on orbital integrals
 for the group $\ti G=\GL (2,F)$. We denote by $\ti \mcH$ the
 Hecke algebra for $\ti G$ and for any $r\geq 0$ define
 subalgebras $\ti \mcH _r\subset \ti \mcH$ as in \cite{Ka}. We
 fix a Haar measure on $\ti G$  identify $\ti \mcH$ 
with $\mcS (\ti G)$. For any $x\in X -X_2$ we denote by
$\ti \phi _x \in \ti \mcH$ the function supported on 
$\ti \Gamma _{d(x)}$ and equal to $\ti x(p(\gamma ))$ on $\ti \Gamma _{d(x)}$ and 
write $\ti \mu _x=\ti \phi _x \wt{dg}$.

The following result is immediate. 
\begin{cl}\label{9.1} 
 $\ti \mu_x\in \ti \mcH _{d(x)}$ for any $x\in X, \mu \in \mcH _x$.
\end{cl}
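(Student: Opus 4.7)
The claim is flagged as immediate, so the plan is to unpack both sides of the inclusion and verify that $\ti\mu_x$ satisfies the two conditions that cut out $\ti\mcH_{d(x)}\subset\ti\mcH$ in \cite{Ka}: a support condition at level $d(x)$, and a bi-transformation law under $\ti\Gamma_{d(x)}$ through the character $\ti x$.

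First I would dispatch the support condition directly from the definition: $\ti\phi_x$ was declared to be supported on $\ti\Gamma_{d(x)}$, so the measure $\ti\mu_x=\ti\phi_x\wt{dg}$ inherits the same support and in particular sits inside the level-$d(x)$ congruence subgroup that appears in the definition of $\ti\mcH_{d(x)}$.

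The substantive point is the bi-transformation law. The key input is that $\ti x\colon\gamma\mapsto x(g_{11}^2/\det g)$ is actually a \emph{character} of $\ti\Gamma_{d(x)}$ (not merely a function); this is precisely the content of the Claim in Section 2 applied to the $\GL_2$-level $\ti\Gamma_{d(x)}$ via the projection $p$. Granted that, the formula $\ti\phi_x(\gamma)=\ti x(p(\gamma))$ exhibits $\ti\mu_x$ as (a scalar multiple of) the $\ti x$-isotypic projector in the group algebra of $\ti\Gamma_{d(x)}$, whence both left and right translation by any $\gamma\in\ti\Gamma_{d(x)}$ scale $\ti\mu_x$ by $\ti x(\gamma)$, matching the defining transformation law for $\ti\mcH_{d(x)}$.

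The only obstacle is bookkeeping: aligning the normalization of $\wt{dg}$ and the orientation of the character twist used in \cite{Ka} with the conventions of the present paper. Once those conventions are matched there is no further computation, which is why the statement is labeled immediate.
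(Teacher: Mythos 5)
The paper offers no argument here (the claim is simply labeled ``immediate''), so the question is whether your verification is the right one, and it is not. The subalgebras $\ti\mcH_r\subset\ti\mcH$ are defined in \cite{Ka} for every $r\ge 0$ with no reference to any character: roughly, $\ti\mcH_r$ consists of the measures of level at most $r$, i.e.\ those bi-invariant under the level-$r$ principal congruence subgroup $\ti K_r=1+\varpi^rM_2(\mcO)$ (or the corresponding Moy--Prasad subgroups, which is what Claim \ref{9.3} and the Appendix require). It is not ``cut out by \ldots\ a bi-transformation law under $\ti\Gamma_{d(x)}$ through the character $\wt x$''; no $x$ enters its definition. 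The space you describe --- measures supported on $\ti\Gamma_{d(x)}$ on which left and right translation by $\gamma\in\ti\Gamma_{d(x)}$ act by the scalar $\wt x(\gamma)$ --- is the $\GL(2)$-analogue of $\mcH'_x$ from Lemma \ref{6.1}, a different object. Your argument establishes that $\ti\mu_x$ lies in that space, which is true but is not the statement: equivariance under a nontrivial character is not invariance.

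What actually has to be checked is that $\gamma\mapsto\wt x(p(\gamma))$ is constant on cosets of $\ti K_{d(x)}$ inside $\ti\Gamma_{d(x)}$, i.e.\ that $\wt x$ is trivial on $\ti K_{d(x)}$. Since $\wt x(g)=x(g_{11}^2/\det g)$ and, as $g$ runs over $\ti K_d$, the quantity $g_{11}^2/\det g$ runs over all of $U_d$, this amounts to $x$ itself --- not merely $x^2$ --- being trivial on $U_{d(x)}$. That is the one non-formal point: $d(x)$ is by definition the conductor of $x^2$, and the implication ``$x^2$ trivial on $U_d$ implies $x$ trivial on $U_d$'' holds when the residue characteristic is odd (squaring is bijective on the pro-$p$ group $U_d$) but fails in general in residue characteristic $2$, where the conductor of $x$ can exceed $d(x)$. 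So the ``bookkeeping'' you defer to at the end is exactly where the content, and the only possible trouble, sits; your proposal never engages with it. The support observation in your first step is fine.
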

Let $\ti T\subset \ti G$ be a maximal elliptic torus, $\ti \mcT \subset M_2(F)$ the Lie algebra of $\ti T$. Define
$$ \ti \mcT _0=\ti \mcT \cap \varpi M_2(\mcO), \ti T_0=\ti T\cap \ti K_1.$$ 
It is clear that $\ti T_0$ is invariant under multiplications by $c\in U_1\subset \mcO ^\times$ and the projection $p:\ti G\to G$ induces a bijection $\ti T_0/U_1\to T_0, T_0=p(\ti T_0)$.
For any $a\in \ti \mcT _0$ we have $Id_2+a\in \ti T_0$.

The folowing Claim follows from \cite{Sh} and \cite{hc:admissible}.
\begin{cl}\label{9.2} 

$a)$. For any maximal elliptic torus $\ti T\subset\ti G$ there exist functions $\ti c_e, \ti c_\mcU$ on 
$\ti \mcT$ such that 
$$c_e(ca)=c_e(a), c_\mcU (ca)=\| c\ |^{-1}c_\mcU (a), c\in F^\times, a,ca\in \ti \mcT _0$$
 and for any $\ti \mu =\ti f\wt {dg}\in \ti \mcH $ there exists a neighborhood $Y_\mu$ of $0$ in $\ti \mcT$ such that $a\in Y_{\ti \mu}$ we have
$$(\star ){}I_\bO (\ti f)=\ti c_e(a)\delta (\ti f)+\ti c_\mcU (a)\bo (\ti f)$$
where 
$$\bO =(Id_2+a)^G, \delta (\ti f)=\ti f(e),\bo (\ti f)=\int _\mcU \ti f\nu .$$

Define functions $c_e,c_\mcU$ on $T_0$ by 
 $$c_e (g):= \ti c_e(g-Id_2), c_\mcU  (g):= \ti c_\mcU(g-Id_2).$$ Then 
 
$b)$ The functions $c_e, c_\mcU$  are invariant under multiplication by $cId_2\in \ti G, c\in U_1$.

 Therefore these functions define functions on $T_0$ which we also denote by  $c_e$ and $c_\mcU$.
\end{cl}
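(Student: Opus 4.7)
The plan is to invoke Shalika's germ expansion for $\ti G = \GL(2,F)$ from \cite{Sh} together with Harish-Chandra's homogeneity results from \cite{hc:admissible} to obtain part $a)$, and then to derive the $U_1$-invariance in part $b)$ from the $\mcO^\times$-equivariance built into the lifting map $p^\star$.

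For part $a)$: since $\ti T$ is an elliptic maximal torus, the centralizer in $\ti G$ of any regular $t \in \ti T$ equals $\ti T$ itself, which contains no nontrivial unipotent elements. The only nilpotent orbits contributing to the germ expansion at $Id_2$ are therefore the trivial orbit $\{e\}$ and the regular unipotent orbit $\mcU$. Shalika's theorem then asserts that for any $\ti\mu \in \ti\mcH$ there is a neighborhood $V_{\ti\mu}$ of $Id_2$ in $\ti T$ such that for regular $t \in V_{\ti\mu}$,
\[ I_{t^{\ti G}}(\ti\mu) = \Gamma_e(t)\,\delta(\ti\mu) + \Gamma_\mcU(t)\,\bo(\ti\mu), \]
where $\Gamma_e, \Gamma_\mcU$ are the Shalika germs at $Id_2$, uniquely determined modulo germs of smooth functions. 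Setting $\ti c_e(a) := \Gamma_e(Id_2 + a)$, $\ti c_\mcU(a) := \Gamma_\mcU(Id_2 + a)$, and $Y_{\ti\mu}$ the preimage of $V_{\ti\mu}$ under $a \mapsto Id_2 + a$, yields $(\star)$. The stated homogeneity comes from Harish-Chandra's homogeneity of Shalika germs: under dilation $a \mapsto ca$ of the Lie algebra, $\Gamma_\mcO$ transforms by the factor $\|c\|^{-\dim(\mcO)/2}$, giving degree $0$ (invariance) for $\mcO = \{e\}$ and degree $-1$ for the regular nilpotent orbit $\mcU$ (since $\dim \mcU = 2$ as a $\ti G$-orbit).

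For part $b)$: I need to show $\ti c_e(c\ti g - Id_2) = \ti c_e(\ti g - Id_2)$ and the analogous identity for $\ti c_\mcU$, whenever $\ti g \in \ti T_0$ and $c \in U_1$. The key input is that the measure $\ti \mu = p^\star \mu$, for $\mu \in \mcH(G)$, is $\mcO^\times$-invariant under left multiplication by construction. Since $cId_2$ is central in $\ti G$, conjugation yields $\ti h (c\ti g) \ti h^{-1} = c \cdot \ti h \ti g \ti h^{-1}$, hence
\[ I_{(c\ti g)^{\ti G}}(\ti\mu) = I_{\ti g^{\ti G}}(\ti\mu^{c}), \qquad \ti\mu^c(h) := \ti\mu(ch). \]
For $\ti\mu = p^\star\mu$ and $c \in U_1 \subset \mcO^\times$, the $\mcO^\times$-invariance gives $\ti\mu^c = \ti\mu$, so $I_{(c\ti g)^{\ti G}}(p^\star\mu) = I_{\ti g^{\ti G}}(p^\star\mu)$. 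Applying $(\star)$ to both sides and subtracting,
\[ \bigl[\ti c_e(c\ti g - Id_2) - \ti c_e(\ti g - Id_2)\bigr]\delta(p^\star\mu) + \bigl[\ti c_\mcU(c\ti g - Id_2) - \ti c_\mcU(\ti g - Id_2)\bigr]\bo(p^\star\mu) = 0 \]
for all $\mu \in \mcH(G)$ with $\ti g$ (and hence $c\ti g$) in an appropriate neighborhood. Since one can exhibit two such test measures $\mu_1, \mu_2 \in \mcH(G)$ making the vector $(\delta(p^\star\mu_i), \bo(p^\star\mu_i)) \in \mC^2$ linearly independent (e.g.\ $\mu_1$ supported near $e \in G$ with $\mu_1(e)\neq 0$, and $\mu_2$ supported near the regular unipotent set with vanishing value at $e$), both coefficients vanish, giving the required invariance.

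The main obstacle is the uniformity issue: the Shalika neighborhood $Y_{\ti\mu}$ depends on $\ti\mu$, so the subtraction argument above must be carried out with a fixed pair $\mu_1, \mu_2$ adapted to the target point $a \in \ti\mcT_0$. This is a routine matter: one fixes $a$, chooses $\mu_1, \mu_2$ achieving linear independence of $(\delta, \bo) \circ p^\star$, and then applies $(\star)$ at both $\ti g = Id_2 + a$ and $c\ti g$ for each of these two measures, which is legitimate as soon as $a$ (and $(c-1)Id_2 + ca$) lies in $Y_{p^\star\mu_1} \cap Y_{p^\star\mu_2}$. The existence of such test measures reduces to the standard fact that $\delta$ and $\bo$ are linearly independent invariant distributions on $\ti G$, together with surjectivity of $p^\star$ onto $\mcO^\times$-invariant measures supported on $G'$, which contains neighborhoods of both $e$ and of the regular unipotent set.
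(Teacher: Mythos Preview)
The paper's own ``proof'' of Claim~9.2 is simply the sentence that it ``follows from \cite{Sh} and \cite{hc:admissible}''; no argument is written out. Your treatment of part $a)$ correctly unpacks that citation: the existence of the two germs is Shalika's expansion (there being only two unipotent classes in $\GL(2)$), and the homogeneity under $a\mapsto ca$ is Harish-Chandra's homogeneity of germs. This matches the paper.

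For part $b)$ you go well beyond the paper, supplying an actual argument via the $\mcO^\times$-invariance of $p^\star\mu$. The idea is sound, but there is a gap you paper over with ``this is a routine matter.'' Your subtraction argument shows the desired identity only for those $a\in\ti\mcT_0$ lying in $Y_{p^\star\mu_1}\cap Y_{p^\star\mu_2}$ for your chosen pair $\mu_1,\mu_2$. You then argue for the existence of $\mu_1,\mu_2$ with linearly independent $(\delta,\bo)$-values, but you never explain why, for an \emph{arbitrary} fixed $a\in\ti\mcT_0$, such a pair can also be chosen so that its germ neighborhood contains $a$. Linear independence of $\delta$ and $\bo$ only guarantees \emph{some} separating pair, not one whose Shalika neighborhood is large enough. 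Nor can one simply bootstrap from a neighborhood of $0$ to all of $\ti T_0$ via the homogeneity of part $a)$: the $U_1$-action $g\mapsto cg$ corresponds on the Lie algebra side to $a\mapsto ca+(c-1)Id_2$, which is \emph{not} a dilation, so the homogeneity relation $\ti c_e(\lambda a)=\ti c_e(a)$ does not directly propagate the $U_1$-invariance outward.

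The gap is closable using exactly the references the paper cites: Harish-Chandra's results in \cite{hc:admissible} give an \emph{explicit} germ neighborhood (a fixed lattice such as $\ti\mcT\cap\varpi M_2(\mcO)=\ti\mcT_0$) once the test function is sufficiently smooth, so one may take $\mu_1,\mu_2$ of depth zero and obtain $(\star)$ on all of $\ti\mcT_0$ at once. Alternatively, the explicit shape of the $\GL(2)$ germs ($\Gamma_e$ constant, $\Gamma_\mcU$ proportional to $|D(t)|^{-1/2}$ with $D(ct)=c^2D(t)$) makes the $U_1$-invariance immediate. Either way, you should say which input you are using rather than declaring the point routine.
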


As follows from Appendix the results of  \cite{Ka} are applicable for all local fields $F$. So we have the following statement.

\begin{cl}\label{9.3} 

The equality $(\star)$ holds for all $a\in \varpi ^r\cap \ti \mcT$ if 
$\ti \mu =\ti f \wt {dg}\in \ti \mcH _r$.
\end{cl}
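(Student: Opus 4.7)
The strategy is direct: invoke the main theorem of \cite{Ka}, which extends the local Shalika germ expansion of Claim \ref{9.2} from a test-function-dependent neighborhood $Y_{\ti\mu}$ of $0 \in \ti\mcT$ to the uniform lattice $\varpi^r \cap \ti\mcT$, for test functions lying in $\ti\mcH_r$. The role of the Appendix by DeBacker is to verify that the harmonic-analytic constructions underlying \cite{Ka} (Moy--Prasad filtrations, a Cayley-type map relating a lattice in the Lie algebra to a subgroup of the group, germ expansions) remain valid in all residual characteristics, so that the theorem of \cite{Ka} is available for arbitrary local $F$.

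First I would recall from \cite{Ka} the definition of $\ti\mcH_r \subset \ti\mcH$: it is a subalgebra of the Hecke algebra adapted to the filtration of $\ti G$ at depth $r$, designed so that orbital integrals of its elements are homogeneous in a way compatible with the lattice $\varpi^r M_2(\mcO)$ on the Lie-algebra side. The key structural statement proved in \cite{Ka} is that for $\ti f \in \ti\mcH_r$ and $a \in \varpi^r \cap \ti\mcT$, the orbital integral $I_{(Id_2 + a)^G}(\ti f)$ admits the germ expansion $(\star)$ --- with precisely the germs $\ti c_e, \ti c_\mcU$ produced in Claim \ref{9.2} --- on the full lattice $\varpi^r \cap \ti\mcT$, not merely on $Y_{\ti\mu}$. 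This is exactly the conclusion of Claim \ref{9.3}.

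Concretely, one can assemble the proof by a scaling argument: given $a \in \varpi^r \cap \ti\mcT$ and $\ti\mu = \ti f \, \wt{dg} \in \ti\mcH_r$, pick $c \in F^\times$ with $\val(c)$ sufficiently large that $ca \in Y_{\ti\mu}$, apply Claim \ref{9.2} at $ca$, and then combine the scaling identities $\ti c_e(ca) = \ti c_e(a)$ and $\ti c_\mcU(ca) = \|c\|^{-1} \ti c_\mcU(a)$ from Claim \ref{9.2}(a) with a compatible scaling identity for $I_{(Id_2 + ca)^G}(\ti f)$ valid on $\ti\mcH_r$. The main obstacle lies entirely in this last ingredient: establishing that orbital integrals of elements of $\ti\mcH_r$ transform predictably under the dilation $a \mapsto c a$. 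This is the technical heart of \cite{Ka}, and once the Appendix guarantees that its framework applies without restriction on the residue characteristic, the conclusion of Claim \ref{9.3} follows immediately.
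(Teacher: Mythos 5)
Your proposal matches the paper's treatment: Claim \ref{9.3} is not proved in the text but is deduced directly from the main result of \cite{Ka}, with the Appendix serving exactly the role you assign it, namely removing the restriction on the residue characteristic so that \cite{Ka} applies to all local fields $F$. Your additional sketch of the scaling/homogeneity mechanism inside \cite{Ka} is consistent with Claim \ref{9.2}(a) and with how that reference actually argues, so nothing further is needed.
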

\begin{cor}\label{9.4} 
For any elliptic torus $T\subset G, t\in T\cap \Gamma _1, t\neq e$ 
and any $\mu =fdg \in \mcH _x, d(x)\geq 1$ we have

$$(\star ){}I_t (f)= c_e(t)\delta (f)+c_\mcU (t)\bo (f)$$
where $I_t:=I_{t^G}$.
\end{cor}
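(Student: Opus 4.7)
The plan is to reduce the claim to its $\ti G = \GL(2,F)$ counterpart --- the asymptotic expansion of Claim \ref{9.3} --- by lifting through the pullback map $p^\star: \mcH(G) \to \mcH(\ti G)$ and descending afterwards. First I would set $\ti \mu := p^\star(\mu) = \ti f\,\wt{dg}$; since $\mu \in \mcH_x$, Claim \ref{9.1} places $\ti \mu$ in the subalgebra $\ti \mcH_{d(x)}$. Next, lifting $t$ to a suitably close preimage $\ti t \in \ti T$ of $Id_2$ and writing $\ti t = Id_2 + a$ with $a \in \ti \mcT$, the hypothesis $t \in T \cap \Gamma_1$ together with the effective depth relation $d(x) \leq d(\bO)$ places $a$ inside $\varpi^{d(x)} M_2(\mcO) \cap \ti \mcT$, which is exactly the regime covered by Claim \ref{9.3}.

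Invoking Claim \ref{9.3} then yields
\[ I_{\ti t^{\ti G}}(\ti f) = \ti c_e(a)\,\delta(\ti f) + \ti c_\mcU(a)\,\bo(\ti f), \]
and the remaining step is to transport each term from $\ti G$ to $G$. Using $p_\star \ti \mu = \mu$, the $\mcO^\times$-invariance of $\ti f$ on $G'$, and the fact that $p: G' \to G$ is an $\mcO^\times$-bundle, a Fubini computation should give $\delta(\ti f) = \ti f(Id_2) = f(e) = \delta(f)$, $\bo(\ti f) = \bo(f)$, and $I_{\ti t^{\ti G}}(\ti f) = I_t(f)$; the last equality comes from disintegrating the $\ti G$-orbital integral along the $\mcO^\times$-cosets of $G'$ and using that the centralizer of $\ti t$ in $\ti G$ surjects onto the centralizer of $t$ in $G$ with central kernel. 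Combining these identifications with the definitions $c_e(t) := \ti c_e(a)$ and $c_\mcU(t) := \ti c_\mcU(a)$ from Claim \ref{9.2}(b) --- well-defined on $T_0$ by the $U_1$-invariance recorded there --- produces the stated equality.

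The main obstacle is bookkeeping rather than conceptual difficulty: verifying the depth match $a \in \varpi^{d(x)} \cap \ti \mcT$ for a well-chosen lift $\ti t$ (this requires care because the two notions of depth --- membership in $\Gamma_d$ versus $Id_2 + \varpi^d M_2(\mcO)$ --- can differ for ramified tori), normalizing the Haar measures on $\ti G$ and $G$ so that each of $\delta$, $\bo$, and $I_t$ transports without spurious scalars, and confirming that the scalar-ambiguity in the choice of $\ti t$ is absorbed by the scaling laws for $\ti c_e$ and $\ti c_\mcU$ listed in Claim \ref{9.2}(b). The complementary regime $d(x) > d(\bO)$, not directly covered by this argument, is handled separately: Corollary \ref{7.3} already forces $I_t(f) = 0$ there.
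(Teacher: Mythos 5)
Your proposal follows exactly the paper's (one-line) argument: the paper proves this corollary simply by applying Claim \ref{9.3} to $\ti\mu = p^\star(\mu)$, which is precisely your reduction. The additional bookkeeping you supply --- the descent of $\delta$, $\bo$, and $I_t$ along the $\mcO^\times$-bundle $p\colon G'\to G$ and the depth-matching caveats --- is detail the paper leaves implicit, and is a sound elaboration of the same route.
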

\begin{proof}   Apply the Corollary \ref{9.3} to $\ti \mu =p^\star (\mu)$.
\end{proof}
\begin{prop}\label{9.4a} 

For any $f\in \mcH _{cusp}$ we have
$$I_\bO (f)=\sum _\pi tr (\pi (f))d(\pi) $$
where $\pi$ runs through the set of equivalent classes of irreducible cuspidal representations of $G$.
\end{prop}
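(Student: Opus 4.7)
The plan is to reduce the identity to the case where $f$ is a matrix coefficient of a single cuspidal representation, then match the two sides by combining Claim~\ref{3.1}(c) with Schur's orthogonality.

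Both sides of the claimed identity are conjugation-invariant functionals on $\mcH_{\cusp}$, so they factor through the coinvariant quotient $\mcH_{G,\cusp}$. Since each cuspidal $\pi$ forms its own block of $\mcC$ (the category being semisimple on the cuspidal part), the quotient $\mcH_{G,\cusp}$ decomposes as a direct sum of one-dimensional pieces indexed by $\pi \in \wh G_{\cusp}$, with the $\pi$-piece detected by the trace map $\mu \mapsto \tr \pi(\mu)$. A natural generator of the $\pi$-piece is the class of $\mu_v := m_v\,dg$, where $m_v(g) = (\pi(g)v,v)$ for a unit vector $v \in V_\pi$: the function $m_v$ is compactly supported because $\pi$ is cuspidal, and $\pi(\mu_v) \neq 0$ by Schur. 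So it suffices to verify the identity on each such $\mu_v$.

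For the right-hand side at $f = \mu_v$, I invoke Schur's orthogonality for square-integrable representations: for cuspidal $\pi' \not\cong \pi$ the operator $\pi'(\mu_v) = \int m_v(g)\pi'(g)\,dg$ vanishes, because matrix coefficients of distinct square-integrable representations are $L^2$-orthogonal; and a direct Schur computation identifies $\pi(\mu_v)$ as a rank-one operator proportional to the projection onto $\mC v$, of trace $1/d(\pi)$. Under the reading of the right-hand side consistent with Theorem~\ref{1.1}, namely $\sum_{\pi'}d(\pi')\chi_{\pi'}(t)\tr\pi'(f)$, the sum then collapses to a single term $d(\pi)\chi_\pi(t)\cdot(1/d(\pi)) = \chi_\pi(t)$.

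For the left-hand side $I_t(\mu_v)$, I apply Claim~\ref{3.1}(c): the generalized functions $I_n(v)(g) = \int_{h \in G^{\le n}} m_v(hgh^{-1})\,dh$ converge to $\chi_\pi/d(\pi)$. For $t$ regular elliptic, the centralizer $T \subset \PGL(2,F)$ is compact, so combined with the compact support of $m_v$ the map $h \mapsto m_v(hth^{-1})$ is compactly supported on $G$; hence $I_n(v)(t)$ stabilizes for large $n$, and decomposing $dh$ along $T\backslash G$ (using that $T$ commutes with $t$) identifies the stable value with $\operatorname{vol}(T)\cdot I_t(\mu_v)$. Passing to the limit yields $I_t(\mu_v) = \chi_\pi(t)/d(\pi)$ once $T$ carries total mass one. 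The main technical point is keeping the three relevant measures consistent --- on $G$ (pinned down by $d(\St) = 1$), on the compact torus $T$, and on the quotient $G/T$ implicit in the definition of $I_\bO$ --- so that the volume factor introduced by unfolding on the left matches the implicit normalization on the right; once that is arranged, linearity across the decomposition $\mcH_{G,\cusp} = \bigoplus_\pi \mcH_{G,\pi}$ completes the proof.
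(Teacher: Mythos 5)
Your proposal is correct and follows essentially the same route as the paper: reduce by linearity to the case where $f$ is a matrix coefficient of a single cuspidal representation (the paper simply asserts that $\mcH_{\cusp}$ is spanned by such coefficients, which your cocenter decomposition justifies), evaluate the right-hand side by Schur orthogonality, and identify the left-hand side via Claim \ref{3.1}(c). Your explicit bookkeeping of the measure normalizations, and your reading of the right-hand side as $\sum_\pi d(\pi)\chi_\pi(t)\tr\pi(f)$ in line with Theorem \ref{1.1}, only make explicit what the paper's two-sentence proof leaves implicit.
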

\begin{proof} Since $\mcH _{cusp}$ is spanned by matrix coefficients of irreducible cuspidal representations it is sufficient to chack the equality in the case when $f=m_\xi$
is a matrix coefficient  of an irreducible cuspidal representation $(\pi ,V),\xi \in V$ but in this case the equality follows from Proposition \ref{3.1} $c)$.
\end{proof}

\begin{thm}\label{9.5} 
For any elliptic torus $T\subset G$  and  any regular elliptic conjugacy class $\Omega =t^G\subset G, \bO \cap T_0\neq \emptyset$ and
any $\mu\in\mcH$, we have

$$(\star) I_\bO (f)=
\sum _\pi tr (\pi (f))d(\pi ,dg)+\sum _{x\in X|d(x)\leq d(\bO)}
(c_e(t)\delta _x(f)+c_u(t)\bo _x(f))$$
\end{thm}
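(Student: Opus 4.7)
The plan is to use the direct sum decomposition
$\mcS(G) = \mcS(G)_{\cusp} \oplus \left(\oplus_{x\in X/i}\mcS(G)_x\right)$
from Proposition \ref{7.1}(a), together with the dual decomposition of $\mcS^\vee(G)$, writing $f = f_{\cusp} + \sum_{x\in X/i} f_x$ and verifying the identity one summand at a time. The observation that makes this work is that for cuspidal $\pi$ the character $\chi_\pi$, and each $\delta_x$ and $\bo_x$, annihilate every component other than their own, so both sides of the claimed identity split block-diagonally.

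First I would handle the cuspidal summand. Proposition \ref{9.4a} gives $I_\bO(f_{\cusp}) = \sum_{\pi \in \wh G_{\cusp}} \tr(\pi(f_{\cusp})) d(\pi)$; since cuspidal $\pi$ pair nontrivially only with $\mcS(G)_{\cusp}$, this equals $\sum_{\pi \in \wh G_{\cusp}} \tr(\pi(f)) d(\pi)$, matching the first sum on the right. By duality, $\delta_x$ and $\bo_x$ all annihilate $f_{\cusp}$, so the remaining terms of the right-hand side contribute nothing on this piece.

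Next I would dispose of the high-depth components. For $x \notin X_2$ with $d(x) > d(\bO)$, Lemma \ref{7.2}(b) identifies $\mcH'_x$ with $\mcH_{G,x}$; since $I_\bO$ is $G$-invariant it factors through $\mcH_G$, and Corollary \ref{7.3} then gives $I_\bO(f_x) = 0$. The right-hand side of the formula also vanishes on such an $f_x$: the sum there is truncated at $d(x) \leq d(\bO)$, and the distributions $\delta_{x'}, \bo_{x'}$ annihilate $f_x$ for $x' \neq x$.

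It remains to treat $f_x$ for $d(x) \leq d(\bO)$. When $d(x) \geq 1$, Corollary \ref{9.4} directly yields $I_t(f_x) = c_e(t)\delta(f_x) + c_\mcU(t)\bo(f_x)$, which by duality equals $c_e(t)\delta_x(f_x) + c_\mcU(t)\bo_x(f_x)$, matching the $x$-term on the right. The main obstacle will be the remaining depth-zero case $x \in X_2$, for which Corollary \ref{9.4} as written does not apply --- the pullback construction of Claim \ref{9.1} was set up only for $x \notin X_2$. To handle it I would pull $f_x\,dg$ back to $\ti G = \GL(2,F)$ via $p^\star$, use the hypothesis $\bO \cap T_0 \neq \emptyset$ to write $t = p(\Id_2 + a)$ with $a \in \ti \mcT_0$, and then invoke the germ expansion of Claim \ref{9.3} at depth $r = 0$, which holds at $a$ for the pullback measure and produces the desired $c_e(t)\delta_x(f_x) + c_\mcU(t)\bo_x(f_x)$. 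Summing the contributions from all components yields the identity of Theorem \ref{9.5}.
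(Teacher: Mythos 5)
Your proposal follows essentially the same route as the paper's own proof: decompose by Bernstein components, handle the cuspidal part via Proposition~\ref{9.4a}, kill the components with $d(x)>d(\bO)$ via Corollary~\ref{7.3}, and obtain the components with $d(x)\leq d(\bO)$ from the germ expansion of Claim~\ref{9.3} / Corollary~\ref{9.4}. The only organizational difference is that the paper first reduces to the basis $\{\phi_{n,x}\}$ of $\mcH'_x$ via Lemma~\ref{7.2} and disposes of $n\neq 0$ by a support argument on the left and Lemma~\ref{6.3}(b) together with the Plancherel formula (Proposition~\ref{8.1}) on the right, whereas you apply Corollary~\ref{9.4} to the whole component at once; your explicit flagging of the depth-zero case $x\in X_2$ identifies a point the paper's proof silently glosses over, since $\mcH'_x$, $\phi_{n,x}$ and Lemma~\ref{7.2} were set up only for $x\notin X_2$.
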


\begin{proof}

As follows from Proposition \ref{9.1} the equality is true for
$f\in \mcS (G)_{cusp}$. So as follows from Lemma \ref{7.2}
it is sufficient to check the equality in the case of $f=\phi  _{n,x}\in\mcS (G)_x, x\in X, n\in \mZ$.

If $n\neq 0$ then $\bO$ does not intersect the support of $\phi _{n,x}$. On the other hand it follows from Lemma \ref{6.3} $b)$ 
and the Plancherel formula (Proposition \ref{8.1}) that 
 the right side of $\star$ also vanishes in this case.

 We see that for a proof of Theorem \ref{9.1} it is sufficient to check the equation $\star$ in the case $n=0$.

Since   $\delta _x(\ti \mu _c)=\delta (\mu _{0,x})$ and $\bo _x(\ti \mu _x)=\bo (\mu _{0,x})$
the equality $\star$ follows from Claim \ref{9.3} in the case when  $d(x)\leq d(\bO)$ .

On the other hand iff $d(x)>d(\bO)$, the equality follows from Corollary \ref{7.2}

\end{proof}

\section{The case of general groups} 
Let  $G$ be a split semisimple $F$-group. I fix a Haar measure $dg$ on $G$ and 
 often write $G$ instead of $G(F)$. Using the Haar mesure $dg$ one identifies $f\to fdg$ the space $\mcS (G)$ of locally 
constant $\mC$-valued compactly supported functions on $G$ with
the space $\mcH =\mcS (G)$  of locally constant $\mC$-valued compactly supported measures on $G$. The convolution defines 
 an algebra structure on  $\mcH$ and we define 
$$ \mcH _G:= \mcH / [\mcH , \mcH]$$
For $f\in \mcS (G)$ we denote by $f_G$ the image of $f$ in 
$ \mcH _G$. 

For any regular elliptic element 
$t\in G$ we  define a functional $I_t$ on $\mcH$ by

 $$ I_t(fdg)=\int _G f(gtg^{-1})dg$$
It is clear that $I_t$ does not depend on a choice a Haar measure and  depends only 
on the conjugacy class $c =c_t$ of $t$ in $G$. We write $I_c$ instead of $I_t$.

Let $C$ be the set of regular elliptic conjugacy classes of $G$.

There exists (see \cite{K}) a measure $dc$ on $C$ such that 
$$\int _Gf(g)dg=\int _ {c\in C}I_c(f)dc$$ 
for any $f\in \mcS (G)$ supported on the 
subset $G_e\subset G$ of regular elliptic elements.

Let $A(G)\subset \mcS (G)$ be the subset of functions such that 
$\int _\bO fd\bo=0$ for any regular non-elliptic conjugacy class 
$\bO$ where $\bo$ is an invariant measure on $\bO$.

We denote by 
$A_G$ the image of $A(G)$ in $\mcH_G$. For any $ a\in A_G$ we define a function $[a]$ on $C$ by 
$[a](c)= I_c([a]) $.
As follows from Theorem $F$ in \cite{K} for any $ [a], [b]\in A_G$ the scalar product 
$$< [a],[b] >:=\int _C[a]\ov {[b]}dc$$
is well defined.

Let $\mcZ$ be the Bernstein center of $G$. As follows 
from  Theorem $B$ in \cite{K} there exists a countable subset
 $S\subset Spec(\ mcZ)$ of characters and a decomposition
$$A_G=\oplus A_\bo ,\bo \in S$$
 of $A$ into a direct sum of finite dimensional subspaces such 
that $z\in \mcZ$ acts by $\bo (z)$ on $A_\bo ,\bo \in S$.  

As follows from \cite{K} the subspaces $A_\bo$ are mutually orthogonal and the restrictions of the form $<,>$ on the subspaces $A_\bo$ are positive definite. For any $\bo \in S$   we define 
a function $\phi _{\bo ,c}$ on $C$ with values in  complex-valued functions on $C$ by 
 $$\phi _{\bo ,c} =\sum _i \ov {[a_i]}(c)[a_i] $$
where $\{ a_i\}$ is an orthonormal basis of $A_\bo$.  We can  consider  $\phi _{\bo ,c}$ 
as a distribution on $\mcH$ where
$$\phi _{\bo ,c}(f)=\int _C \phi _{\bo ,c}I_c(f)$$ 
 
For a regular elliptic conjugacy class $c\subset G$ 
we define a functional $\alpha _c$ on $\mcH$ by 
$$\alpha _c(f):=I_c(f)-\sum _{\bo \in S}  <[f],\phi _\bo (c)>.$$

It is clear that for any $f\in \mcS (G)$ almost all summands of 
the sum $\sum _{\bo \in S}  <[f],\phi _\bo (c)>$ vanish.

If $char (F)=0$ then it follows from  \cite{hc:admissible} that $A(G)\subset \mcS (G)$ can be described as the subspace of functions $f$
 such that $tr (\pi (f))=0$ for all representations $\pi$ 
induced from an 
irreducible representations of a proper Levi subgroup of $G$. Therefore (see Theorem $B$ in \cite{K1})  
one can express the functional $\alpha _c$ in terms of traces of representations induced from an 
irreducible representations of a proper Levi subgroup of $G$. 

It would be interesting to find such an expression.

\appendix

\section{On homogeneity for characters of $\GL(n)$}
\begin{center}
\emph{Stephen DeBacker}
\end{center}

The following homogeneity result for $\GL(n,F)$, which is a refinement of the Harish-Chandra--Howe local character expansion~\cite{hc:admissible,howe:fourier},  is known to hold when the residue characteristic of $F$ is sufficiently large~\cite{debacker:homogeneity,waldspurger:finitude}.

Let $\gg$ denote the Lie algebra of $\GL(n,F)$,  let $\dborbit(0)$ denote the set of nilpotent orbits in $\gg$, and for $\dorbit \in \dborbit$ let $\hat{\mu}_\dorbit$ denote the function which represents the Fourier transform of the nilpotent orbital integral $\mu_\dorbit$.

\begin{thm} \label{thm:HCHowe}
Suppose $(\pi,V)$ is an irreducible smooth representation of $\GL(n,F)$ of depth $\rho(\pi)$.  If $\chi_\pi$ denotes the character of $\pi$,  then there exist complex constants $c_\dorbit(\pi)$, indexed by $\dorbit \in \dborbit(0)$, such that
$$\chi_\pi(1+X) = \sum_{\dorbit \in \dborbit(0)} c_\dorbit (\pi) \hat{\mu}_\dorbit (X)$$
for all regular semisimple $X \in \gg_{\rho(\pi)^+}$.
\end{thm}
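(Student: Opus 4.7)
The plan is to adapt the approach of \cite{debacker:homogeneity} and \cite{waldspurger:finitude} to the specific setting of $\GL(n,F)$, where the Moy--Prasad filtrations of the group and its Lie algebra have uniform descriptions in terms of $\mcO$-lattice chains, allowing one to bypass the tameness/residue characteristic hypotheses appearing in those works for general reductive groups. The starting point is the Harish-Chandra--Howe local character expansion \cite{hc:admissible, howe:fourier}, which supplies constants $c_\dorbit(\pi)$ and \emph{some} open neighborhood of $0 \in \gg$ on which the claimed identity already holds; the content of the theorem is to pin this neighborhood down as the regular semisimple locus in $\gg_{\rho(\pi)^+}$.

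First I would reformulate the statement as an identity of invariant distributions on $\gg$. Setting $\Theta_\pi(X) := \chi_\pi(1+X)$ (locally integrable near $0$), the task becomes to show that the distribution $\Theta_\pi - \sum_\dorbit c_\dorbit(\pi)\hat\mu_\dorbit$ annihilates every test function supported in $\gg_{\rho(\pi)^+}$. Since each $\hat\mu_\dorbit$ is homogeneous of a known degree under Lie-algebra scaling by $F^\times$, a dilation argument reduces the question to checking the identity on a single Moy--Prasad lattice $\gg_{x,s}$ for $s > \rho(\pi)$, with $x$ a point of the building at which $V^{G_{x,\rho(\pi)^+}} \neq 0$.

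The main step is to pair both sides against the characteristic functions $\ch_{\gg_{x,s}}$ for varying $s$. On the left, this gives traces $\tr\, \pi(\ch_{1+\gg_{x,s}})$, which by Moy--Prasad theory compute dimensions of fixed-vector spaces $V^{G_{x,s}}$; on the right, Howe's integral formula expresses the pairing as a linear combination of the $c_\dorbit(\pi)$ whose coefficients are explicit integrals of $\hat{\ch}_{\gg_{x,s}}$ against the nilpotent orbital measures $\mu_\dorbit$. Matching the two produces a finite linear system in the $c_\dorbit(\pi)$ whose invertibility must be established. This invertibility is the principal obstacle, and the whole reason for restricting to $\GL(n)$: in general it requires a residue-characteristic hypothesis \cite{debacker:homogeneity}, but for $\GL(n)$ the nilpotent orbits are parameterized by partitions of $n$ uniformly in $F$, and one can invoke the known explicit structure of Shalika germs together with the $\GL(n)$-case of \cite{waldspurger:finitude} to circumvent this hypothesis.

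Once the identity is verified on $\gg_{x,s}$ for one such $s$, the homogeneity of the $\hat\mu_\dorbit$ propagates it to all of $\gg_{x,\rho(\pi)^+}$, and a union over vertices $x$ of the Bruhat--Tits building yields the asserted equality on the full regular semisimple part of $\gg_{\rho(\pi)^+}$.
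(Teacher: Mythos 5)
There is a genuine gap, and your proposal also claims more than the paper actually establishes. The appendix does \emph{not} prove Theorem~\ref{thm:HCHowe} for general $n$ in arbitrary residue characteristic: it reduces the theorem to Conjecture~\ref{conj:one}, an equality $\res_{D_{r^+}}\tilde{J}_{r^+}=\res_{D_{r^+}}J(\NN)$ of spaces of invariant distributions on $\gg$ restricted to $D_{r^+}$ (the reduction is quoted from \cite[\S\S3.1--3.5]{debacker:homogeneity}), and then proves that conjecture only for $n=2$, by a ``descent and recovery'' argument along the Bruhat--Tits building showing that $\res_{D_{1/2^+}}T$ is determined by $\res_{C(\bb_{1/2}/\bb_{1})}T$, followed by a dimension count against the two nilpotent orbits. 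For $n>2$ and small residue characteristic the theorem remains conditional on the conjecture. So a complete argument along your lines would settle an open problem; yours is not complete, for the following concrete reasons.

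First, your dilation step fails: the distribution $\Theta_\pi-\sum_{\dorbit} c_\dorbit(\pi)\hat{\mu}_\dorbit$ is not homogeneous under $X\mapsto zX$, because $\Theta_\pi$ is attached to a fixed representation and scaling changes depth rather than rescaling the character. In the paper, homogeneity is exploited only for the \emph{spaces} $\tilde{J}_{r^+}$, $D_{r^+}$ and $J(\NN)$, which are stable under the substitution $T\mapsto T_z$; that is why the reduction to $r\in\{k/n\,:\,0\le k<n\}$ is legitimate for Conjecture~\ref{conj:one} but cannot be transplanted to the character identity itself. Second, your main step --- pairing against $\ch_{\gg_{x,s}}$ and inverting a linear system --- at best determines the constants $c_\dorbit(\pi)$ \emph{assuming} the expansion already holds on $\gg_{\rho(\pi)^+}$; it does not prove validity of the expansion there, which is the entire content of the theorem (the constants and validity on some small unspecified neighborhood are already supplied by \cite{hc:admissible,howe:fourier}). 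Third, the claim that the residue-characteristic hypothesis can be ``circumvented'' by invoking the $\GL(n)$ case of \cite{waldspurger:finitude} is circular: that reference is precisely one of the sources of the large-residue-characteristic hypothesis you are trying to remove. What is actually needed --- and what the appendix supplies, only for $n=2$ --- is a characteristic-free proof that every $T\in\tilde{J}_{r^+}$, restricted to $D_{r^+}$, is determined by finitely many values on cosets meeting the nilpotent cone, together with the resulting bound $\dim_{\C}\res_{D_{r^+}}\tilde{J}_{r^+}\le\dim_{\C}\res_{D_{r^+}}J(\NN)$.
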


In this appendix we (a) explain the notation that occurs in Theorem~\ref{thm:HCHowe} and its proof; (b) state a conjecture whose validity would imply  Theorem~\ref{thm:HCHowe} for $\GL(n,F)$ independent of the residue characteristic of $F$; and (c) prove this conjecture when $n = 2$.

\subsection*{Notation}

Recall that  $\mcO$
denotes the ring of integers of $F$,  and $\varpi$ denotes a
{uniformizer} so that $\cal{P} = \varpi \mcO$ where $\cal{P}$ is the prime
ideal.  We define $\cal{P}^m = \varpi^m \cdot \mcO$ for $m \in \Z$.
We fix an additive character $\Lambda$ of $F$ that is trivial on
$\cal{P}$ and not trivial on $\mcO$.

We realize $\GL(n,F)$ as the group of $n
\times n$ matrices with entries in $F$ having nonzero determinant.  We let $\Tsp$ denote the  subgroup consisting of diagonal matrices in $\GL(n,F)$.

 We realize $\gg$, the Lie algebra of $\GL(n,F)$, as
the algebra of $n \times n$ matrices with entries in the field
$F$ with the usual bracket operation.    The set of nilpotent matrices in $\gg$ is denoted by $\NN$.  The group $\GL(n,F)$ acts on $\NN$, and $\dorbit(0)$ denotes the corresponding finite set of nilpotent orbits.

For $i \in \Z$, we define the
{standard filtration lattices}
$\mathfrak{k}_i = \varpi^i \cdot \M_n(\mcO)$
of $\gg$ and  the {Iwahori filtration
lattices} $\mathfrak{b}_{i/n} = \mathfrak{b}_{i/n} = \{Y \in \gg \, | \, Y_{jk} \in
\varpi^{\lceil{\frac{j-k + i}{n}} \rceil} \cdot \mcO \}$.
Note that for all integers $i,j$, we have  $\varpi^j \cdot  \mathfrak{k}_i =
\mathfrak{k}_{i + j}$ and $\varpi^j \cdot \mathfrak{b}_{i/n} =
\mathfrak{b}_{j + \frac{i}{n}}$.
More concretely, for $n = 2$ we have
$$\mathfrak{k}_1 = \pmat{\cal{P}}{\cal{P}}{\cal{P}}{\cal{P}}$$
and
$$\mathfrak{b}_0 = \pmat{\mcO}{\mcO}{\cal{P}}{\mcO} \supset \mathfrak{b}_{1/2} =
\pmat{\cal{P}}{\mcO}{\cal{P}}{\cal{P}} \supset \mathfrak{b}_1 = \varpi \cdot
\mathfrak{b}_0.$$

Let $\BB$ denote the reduced Bruhat-Tits building of $\GL(n,F)$ and $\AA \subset \BB$ the apartment corresponding to $\Tsp$.  Let $C_0$ be an alcove in $\AA$.  The group $\GL(n,F)$ acts on $\BB$, and the orbit of every point in $\BB$ intersects the closure of $C_0$ at least once.  Moy and Prasad~\cite{moy-prasad:K-types,moy-prasad:jacquet} associated to each $x \in \BB$ and $r \in \R$  a lattice $\gg_{x,r}$ in $\gg$ and when $r \geq 0$ a compact open subgroup $G_{x,r}$ of $\GL(n,F)$.  For $\GL(n,F)$, we have $G_{x,0} = \gg_{x,0}^\times$ and $G_{x,r} = 1 + \gg_{x,r}$ for $r > 0$.   The Moy-Prasad lattices have the property that $\varpi \gg_{x,r} = \gg_{x,r+1}$ and $\gg_{x,s} \subset \gg_{x,r}$ for $s > r$.

 Since $g \cdot  \gg_{x,r}  = \gg_{gx,r}$, it is enough to understand the Moy-Prasad lattices for $x$ in the closure of $C_0$, and we do this for $\GL(2,F)$ in Figure~\ref{fig:gltwoaid}.  Here the apartment $\AA$ is identified with the horizontal axis and the vertical axis measures $r$.  The chamber $C_0$ has end points $x_0$ and $x'$.   The plane has been divided into polygonal regions by dotted lines and each polygonal region has been labeled by a lattice.    If $(x,r)$ lies in the interior of one of these polygonal regions, then $\gg_{x,r}$ is the lattice so labeled.
 If $(x,r)$ lies on a dotted line, then $\gg_{x,r}$  is given by the label of the polygonal region directly above the point $(x,r)$.  Moy and Prasad define
$$\gg_{x,r^+} = \bigcup_{s > r} \gg_{x,s}.$$
Note that $\gg_{x,r^+} \subset \gg_{x,r}$.
In  Figure~\ref{fig:gltwoaid} we have  $\gg_{x,r} = \gg_{x,r^+}$ unless $(x,r)$ lies on a dotted line, in which case $\gg_{x,r^+}$ is given by the label of the polygonal region directly below the point $(x,r)$.
  Similar notation is used for the Moy-Prasad subgroups.

For $r \in \R$ we define
$$\gg_{r} = \bigcup_{x \in \BB} \gg_{x,r} \, \, \, \text{ and } \, \, \,    \gg_{r^+} = \bigcup_{x \in \BB} \gg_{x,r^+}. $$
We have  $\gg_{r^+} \subset \gg_r$ and $\gg_r  \neq  \gg_{r^+}$ if and only if $n \cdot r \in \Z$.
From~\cite{adler-debacker:Kirillovn} we have
\begin{equation}  \label{equ:star1}
\gg_{r^+}= \bigcap_{x \in \BB}
(\gg_{x,r^+} + \NN)
\end{equation}
Note that $\NN \subset \gg_s$  for all $s \in \R$ and $\gg = \bigcup_s \gg_s$.

  For $f \in \mcS(\gg)$,  the space of compactly supported, complex valued, locally constant functions on $\gg$, we define $\hat{f}$, the {Fourier
  transform} of $f$, by the formula
$$\hat{f}(X) = \int_{\gg} f(Y) \cdot \Lambda( \tr (X \cdot Y)) \,  dY$$
for $X$ in $\gg$.  Here $dY$ is a fixed Haar measure on $\gg$.

If $L$ is a lattice in $\gg$, let $C_c(\gg/L)$ be the subspace of $\mcS(\gg)$ consisting of functions that are locally constant with respect to $L$.   If $L$ and $L'$ are lattices in $\gg$ with $L'  \subset L$, then $C(L/ L')$ denotes the subspace of $\mcS(\gg)$ consisting of functions supported in $L$ and locally constant with respect to $L'$.
Set
$$D_{r^+} = \sum_{x \in \BB} C_c(\gg/\gg_{x,r^+}).$$
From~\cite{adler-debacker:Kirillovn} we have
 $ D_{r^+} =
\widehat{\mcS(\gg_{-r})}$.

We denote by $J(\gg)$ the space of invariant distributions on $\gg$.  For example, if $\dorbit \in \dborbit(0)$, then $\mu_{\dorbit}$,  the corresponding orbital integral, lies in $J(\gg)$.   If $\omega$ is a closed, $G$ -invariant subset of $\gg$ (for example, $\NN$ or $\gg_{r^+}$), then $J(\omega)$ denotes the subspace of $J(\gg)$ consisting of invariant distributions with support in $\omega$.  If $\omega$ is compactly generated and $T \in J(\omega)$, then~\cite{hc:admissible,huntsinger:thesis} the distribution $\hat{T}$ defined by $\hat{T}(f) = T(\hat{f})$ for $f \in \mcS(\gg)$ is represented by a locally integrable function, which is also denoted $\hat{T}$,  on the set of regular semisimple elements in $\gg$.

\subsection*{A conjecture}
Fix an irreducible smooth representation $(\pi,V)$ of $\GL(n,F)$.  The depth of $\pi$, denoted by $\rho(\pi)$, is the smallest non-negative real number for which there exists $x \in \BB$ so that $V$ has non-trivial fixed vectors with respect to $G_{x,\rho(\pi)^+}$.   Choose $r$ such that $\gg_r \neq \gg_{r^+} = \gg_{-\rho(\pi)}$; such an $r$ must be of the form $k/n$ with $k \in \Z$.

For $x \in \BB$ and $s \leq r$, define
$$\tilde{J}_{x,s,r^+} = \{T \in J(\gg) \, | \, \text{for $f \in
C(\gg_{x,s}/\gg_{x,r^+})$, if $   \gg_{s^+}  \cap \supp(f)  =
\emptyset$, then $T(f) = 0$} \}.$$
Since $\NN \subset   \gg_{s^+}$, every invariant distribution supported on the set of nilpotent elements belongs to  $\tilde{J}_{x,s,r^+}$.
Set
$$\tilde{J}_{r^+} = \bigcap_{x \in \BB} \bigcap_{s \leq r}
\tilde{J}_{x,s,r^+}.$$
Note that $J(\NN) \subset \tilde{J}_{r^+}.$

For $T \in J(\gg)$ denote by  $\res_{D_{r^+}}T$  the restriction of $T$ to the space of functions $D_{r^+}$.   It is shown in~\cite[\S\S3.1--3.5]{debacker:homogeneity} that Theorem~\ref{thm:HCHowe}  follows from the following conjecture.

\begin{conj}\label{conj:one}
We have
$$\res_{D_{r^+}} \tilde{J}_{r^+} = \res_{D_{r^+}} J(\NN).$$
\end{conj}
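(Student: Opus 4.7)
The inclusion $\res_{D_{r^+}} J(\NN) \subset \res_{D_{r^+}} \tilde{J}_{r^+}$ is immediate from the containment $J(\NN) \subset \tilde{J}_{r^+}$ noted in the text, so the content of the conjecture is to produce, for any $T \in \tilde{J}_{r^+}$, a distribution $S \in J(\NN)$ with $T|_{D_{r^+}} = S|_{D_{r^+}}$. For $G = \GL(2,F)$ there are exactly two nilpotent orbits, the trivial orbit $\{0\}$ and the regular orbit $\dorbit_{\mathrm{reg}}$, so $J(\NN) = \C\,\mu_{\{0\}} \oplus \C\,\mu_{\dorbit_{\mathrm{reg}}}$ is two-dimensional. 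The plan is to pin down $S$ by matching $T$ on two well-chosen test functions and then verify the equality on the rest of $D_{r^+}$ by a direct building-theoretic analysis, which for $n = 2$ is feasible because the building is a tree whose relevant Moy--Prasad data is spelled out in Figure~\ref{fig:gltwoaid}.

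Concretely, I would first choose $f_1, f_2 \in D_{r^+}$ --- for instance, indicator functions of small cosets of $\gg_{x_0,r^+}$, one centered at $0$ and one centered at a regular nilpotent element in $\gg_{r^+}$ --- so that the $2\times 2$ matrix $\bigl(\mu_\dorbit(f_i)\bigr)_{\dorbit,i}$ is invertible, and define $S = c_0 \mu_{\{0\}} + c_1 \mu_{\dorbit_{\mathrm{reg}}}$ by solving the system $S(f_i) = T(f_i)$. Setting $R := T - S$, which still lies in $\tilde{J}_{r^+}$, it remains to show that any invariant distribution in $\tilde{J}_{r^+}$ vanishing on $f_1$ and $f_2$ vanishes on all of $D_{r^+}$. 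Using $G$-invariance this reduces to evaluating $R$ on characteristic functions $\1_{X + \gg_{x,r^+}}$ as $x$ ranges over $\{x_0, x', y\}$ --- the two vertices and an interior point of $C_0$ --- and $X$ ranges over coset representatives of $\gg_{x,r^+}$ in $\gg$. For each such $X$, let $s \le r$ be minimal with $X \in \gg_{x,s}$: if $X \notin \gg_{s^+}$, the defining property of $\tilde{J}_{x,s,r^+}$ directly forces $R(\1_{X + \gg_{x,r^+}}) = 0$; if $X \in \gg_{s^+}$, then by \eqref{equ:star1} the coset is contained in $\gg_{r^+}$, so the pairing is controlled by the restriction of $R$ to the finite-dimensional subspace of functions in $D_{r^+}$ supported on $\gg_{r^+}$.

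The main obstacle will be this ``absorbed'' case: showing that the normalizations $R(f_1) = R(f_2) = 0$, together with $G$-invariance, force $R$ to vanish on all of $D_{r^+} \cap \mcS(\gg_{r^+})$. For $\GL(2)$ this reduces to a concrete orbit analysis of $G_{x,0}$ acting on $\gg_{r^+}/\gg_{x,r^+}$ --- a module over $\GL(2,k)$ at the vertices and over the Iwahori quotient at the interior point --- and a direct check should show that the space of $G$-invariant functionals on $D_{r^+} \cap \mcS(\gg_{r^+})$ is exactly two-dimensional, spanned by the restrictions of $\mu_{\{0\}}$ and $\mu_{\dorbit_{\mathrm{reg}}}$, so that two independent normalizations suffice. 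This is precisely the step where the general-$n$ argument of \cite{debacker:homogeneity} invokes a tame residue characteristic hypothesis; for $n = 2$ the required finite-group identities reduce to elementary Gauss- and Kloosterman-type sums on $k$ and $k^\times$ that are valid in any residue characteristic of $F$, which is the whole point of establishing Conjecture~\ref{conj:one} in rank one.
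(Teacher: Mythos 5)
Your overall architecture (lower-bound the restriction of $J(\NN)$ by $2$ using two test functions with an invertible matrix of nilpotent orbital integrals, then show that any $T\in\tilde J_{r^+}$ killing those two functions vanishes on all of $D_{r^+}$) is the same dimension count the appendix runs. The gap is in how you propose to establish the upper bound. Your dichotomy on a coset $\1_{X+\gg_{x,r^+}}$ fails in both branches. In the first branch, the defining condition of $\tilde J_{x,s,r^+}$ refers to whether the \emph{entire support} $X+\gg_{x,r^+}$ meets $\gg_{s^+}$, which by \eqref{equ:star1} amounts to $(X+\gg_{x,s^+})\cap\NN\neq\emptyset$; it is not the condition $X\notin\gg_{s^+}$, so for the cosets that survive you can only conclude $X=N+Y$ with $N$ nilpotent, possibly very far from the origin, and $Y\in\gg_{x,s^+}$. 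In the second branch, the claim that $X\in\gg_{s^+}$ forces $X+\gg_{x,r^+}\subset\gg_{r^+}$ does not follow from \eqref{equ:star1} and is false; moreover $\gg_{r^+}$ is an unbounded $G$-invariant neighborhood of $\NN$, so $D_{r^+}\cap\mcS(\gg_{r^+})$ is \emph{not} finite-dimensional and $\gg_{r^+}/\gg_{x,r^+}$ is not a finite module for $G_{x,0}$. There is no finite orbit analysis to fall back on, and asserting that the invariant functionals on that space are spanned by $\mu_{\{0\}}$ and $\mu_{\dorbit_{\mathrm{reg}}}$ is essentially a restatement of the conjecture, not a reduction of it.

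What is missing is the mechanism that actually controls the surviving cosets: an inductive descent. The appendix handles a coset $X+Y+\gg_{x,1/2^+}$ with $X$ regular nilpotent in $\gg_{x,s}\setminus\gg_{x,s^+}$ by averaging over a congruence filtration subgroup $\Tsp_m/\Tsp_{m^+}$ of the torus (when $x$ is the barycenter of $C_0$) or by decomposing a $\fk_1$-coset into $\bb_1$-cosets (when $x$ is the vertex $x_0$); in each case one produces a new test function whose support lies in $\gg_{x',s^+}$ for the \emph{other} point $x'$ of the edge, i.e.\ strictly closer to the origin in that filtration. Iterating terminates in $C(\bb_{1/2}/\bb_1)$, where the $\tilde J$-condition leaves only the two values $T([\pmat{0}{1}{0}{0}+\bb_1])$ and $T([\bb_1])$, giving the bound $\dim\le 2$. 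No Gauss or Kloosterman sums enter and no residue-characteristic hypothesis is needed; the substance is the recursive interplay between the two lattice chains on the closed alcove, which your one-step reduction does not capture. Without some version of this descent the argument does not close.
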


For $z \in F^\times$, $f \in \mcS(\gg)$, and $T \in J(\gg)$, define $T_z(f) = T(f_z)$ and $f_z(X) = f(zX)$ for $X \in \gg$.  If $z$ has valuation $v$, then we have: (i) $T \in \tilde{J}_{r^+}$ if and only if $T_z \in \tilde{J}_{(r+v)^+}$; (ii) $f \in D_{r^+} $ if and only if $f_z \in D_{(r-v)^+}$; and (iii) $T' \in J(\NN)$ if and only if $T'_z \in J(\NN)$.  Thus, since $T(f) = T_z(f_{z\inv})$, it is enough to verify Conjecture~\ref{conj:one}  for $r \in \{ k/n \, | \, 0 \leq k < n\}$.

\subsection*{A proof for $\GL(2,F)$}
 Thanks to the
remarks at the end of the previous section, we only need to verify two
statements:
\begin{equation} \label{equ:depthzero}
\res_{D_{0^+}} \tilde{J}_{0^+} = \res_{D_{0^+}} J(\NN)
\end{equation}
and
\begin{equation} \label{equ:halfdepth}
\res_{D_{1/2^+}} \tilde{J}_{1/2^+} = \res_{D_{1/2^+}} J(\NN).
\end{equation}

We will prove Statement~(\ref{equ:halfdepth}).  A proof of Statement~(\ref{equ:depthzero}) may be carried out in a similar fashion (see also~\cite{debacker:lectures}).

   \subsubsection*{Descent and recovery}  Fix $T \in \tilde{J}(\gg_{1/2^+})$.  The goal of this section is  to show that $\res_{D_{1/2^+}}T$ is completely determined by
$\res_{C(\bb_{1/2}/\bb_{1})}T$, where $\bb_{1/2}$ and $\bb_1$ are Iwahori filtration lattices.

Fix $f \in D_{1/2^+}$. We write $f = \sum_i f_i$ with $f_i \in
C_c(\gg/\gg_{x_i,1/2^+})$ for some $x_i \in \BB$.  Since $T$ is linear,
without loss of generality we may assume that $f \in
C_c(\gg/\gg_{x,1/2^+})$ for some $x \in \BB$.  We can write
$$f = \sum_{\bar{Z} \in \gg/\gg_{x,1/2^+}} c_{\bar{Z}} \cdot [ Z +
\gg_{x,1/2^+} ]$$
where $ [ Z +
\gg_{x,1/2^+} ]$ denotes the characteristic function of the coset  $ Z +
\gg_{x,1/2^+}$ and all but finitely many of the complex constants $c_{\bar{Z}}$ are equal to  zero.
Again, since $T$ is linear, without loss of generality we may assume
that $f = [Z + \gg_{x,1/2^+}]$.

Choose $s \leq 1/2$ with the property that $Z + \gg_{x,1/2^+} \subset \gg_{x,s} \setminus \gg_{x,s^+}$.
By the definition of $\tilde{J}_{r^+}$ and Property~\ref{equ:star1}, we have $T(f) = 0$ if the support of $f$ does not intersect $\gg_{x,s^+} + \NN$.
That is, $T(f) = 0$ unless
$$(Z + \gg_{x,s^+}) \cap \NN \neq \emptyset.$$
So, without loss of generality, we may assume $Z = X + Y$ with $X \in \NN \cap (\gg_{x,s}\setminus \gg_{x,s^+})$ and $Y \in \gg_{x,s^+}$.

Up to conjugacy, we have two choices for $\gg_{x,1/2^+}$; it is either
$\mathfrak{k}_1$ or $\bb_{1}$.  In what follows, the reader is encouraged to
consult Figure~\ref{fig:gltwoaid}.
\begin{figure}[ht]
\begin{center}

\psfrag{r}[][]{$r$}
\psfrag{o}[][]{$1$}
\psfrag{z}[][]{$0$}
\psfrag{m}[][]{$-m$}
\psfrag{om}[][]{$1-m$}
\psfrag{rprr}[][]{$\pmat{\mcO}{\cal{P}}{\mcO}{\mcO}$}
\psfrag{rrrr}[][]{$\pmat{\mcO}{\mcO}{\mcO}{\mcO}$}
\psfrag{rrpr}[][]{$\pmat{\mcO}{\mcO}{\cal{P}}{\mcO}$}
\psfrag{rmpr}[][]{$\pmat{\mcO}{\cal{P}\inv}{\cal{P}}{\mcO}$}
\psfrag{rmtr}[][]{$\pmat{\mcO}{\cal{P}\inv}{\cal{P}^2}{\mcO}$}
\psfrag{pprp}[][]{$\pmat{\cal{P}}{\cal{P}}{\mcO}{\cal{P}}$}
\psfrag{pppp}[][]{$\pmat{\cal{P}}{\cal{P}}{\cal{P}}{\cal{P}}$}
\psfrag{prpp}[][]{$\pmat{\cal{P}}{\mcO}{\cal{P}}{\cal{P}}$}
\psfrag{prtp}[][]{$\pmat{\cal{P}}{\mcO}{\cal{P}^2}{\cal{P}}$}
\psfrag{pmtp}[][]{$\pmat{\cal{P}}{\cal{P}\inv}{\cal{P}^2}{\cal{P}}$}
\psfrag{ptpp}[][]{$\pmat{\cal{P}}{\cal{P}^2}{\cal{P}}{\cal{P}}$}
\psfrag{pptp}[][]{$\pmat{\cal{P}}{\cal{P}}{\cal{P}^2}{\cal{P}}$}
\psfrag{prthp}[][]{$\pmat{\cal{P}}{\mcO}{\cal{P}^3}{\cal{P}}$}
\psfrag{rrrrm}[][]{$\varpi^{-m} \cdot \pmat{\mcO}{\mcO}{\mcO}{\mcO}$}
\psfrag{rrprm}[][]{$\varpi^{-m} \cdot \pmat{\mcO}{\mcO}{\cal{P}}{\mcO}$}
\psfrag{rmprm}[][]{$\varpi^{-m} \cdot \pmat{\mcO}{\cal{P}\inv}{\cal{P}}{\mcO}$}
\psfrag{ppppm}[][]{$\varpi^{-m} \cdot \pmat{\cal{P}}{\cal{P}}{\cal{P}}{\cal{P}}$}
\psfrag{prppm}[][]{$\varpi^{-m} \cdot \pmat{\cal{P}}{\mcO}{\cal{P}}{\cal{P}}$}
\psfrag{prtpm}[][]{$\varpi^{-m} \cdot \pmat{\cal{P}}{\mcO}{\cal{P}^2}{\cal{P}}$}
\psfrag{pptpm}[][]{$\varpi^{-m} \cdot \pmat{\cal{P}}{\cal{P}}{\cal{P}^2}{\cal{P}}$}
\psfrag{x}[][]{$x_0$}
\psfrag{y}[][]{$$}
\psfrag{zp}[][]{$x'$}
\includegraphics[scale=.52]{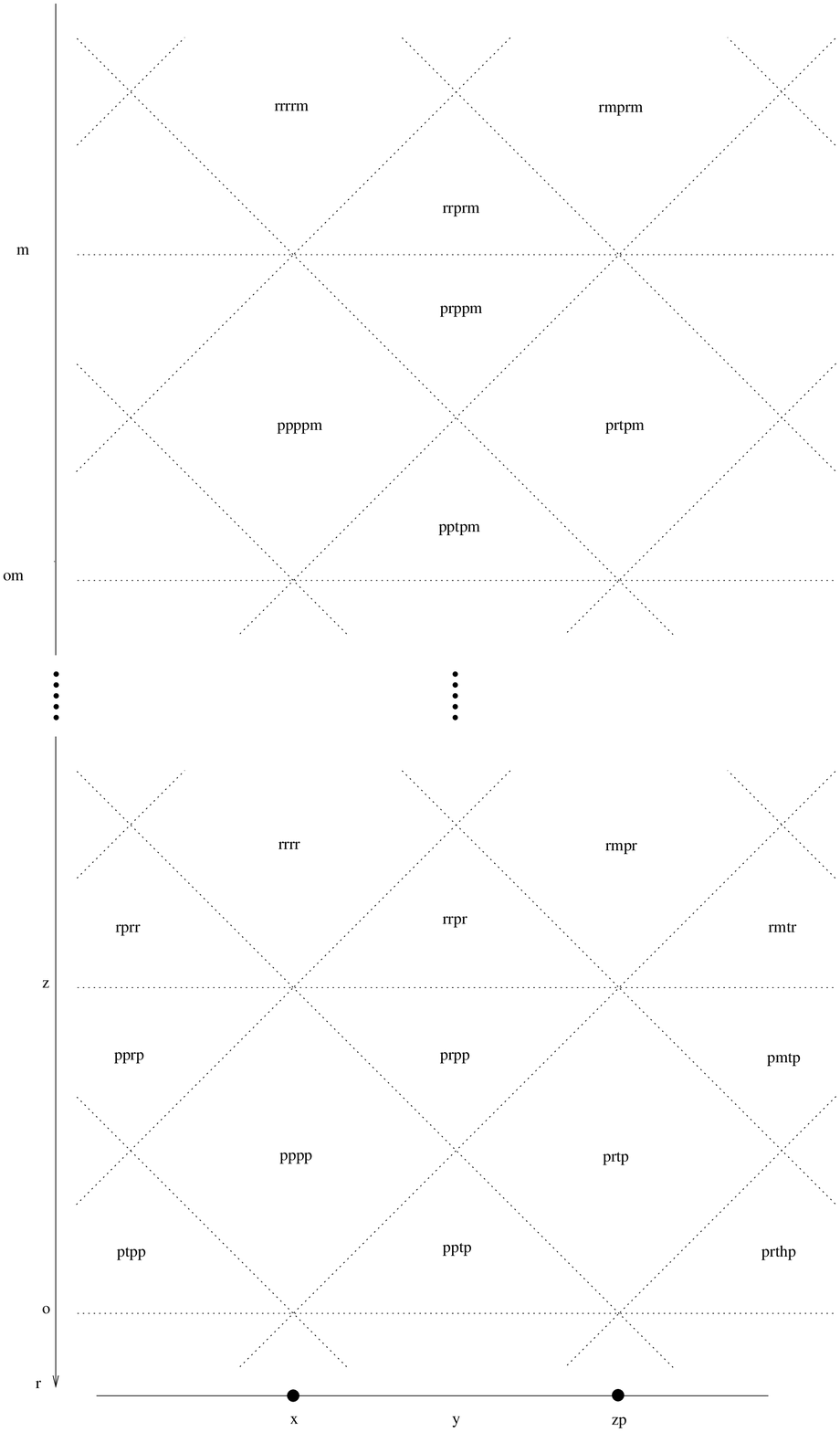}
\end{center}
\caption{$\gg_{x,r}$}
\label{fig:gltwoaid}
\end{figure}

We first examine the $ \gg_{x,1/2^+} = \bb_{1}$ case.  In this case, we are looking at
the coset $X + Y +  \gg_{y,1/2^+}$ where $y$ is the barycenter of $C_0$,  $X \in \NN \cap ( \gg_{y,s}\setminus \gg_{x,s^+})$, and $Z \in \gg_{x,s^+}$.  Since $ \NN \cap (\gg_{y,s}\setminus \gg_{x,s^+})$ is empty unless $s = -m+1/2$ for $m \in \Z_{\geq 0}$, we may assume $s$ has this form.
Since we are trying to show that $\res_{D_{1/2^+}}T$ is completely determined by
$\res_{C(\bb_{1/2}/\bb_{1})}T$, we may assume $m \geq 1$.
Since $T$ is a $G$-invariant distribution,  after conjugating by $\stab_{\GL(2,F)}(y) = \langle \pmat{0}{1}{\varpi}{0}  \rangle \ltimes \bb_0^\times$ we may assume that $X$ is
$$\pmat{0}{0}{\varpi^{(1-m)}u}{0}$$
with $m > 0$ and $u \in \mcO^\times$.   Let $\Tsp_m = \Tsp \cap G_{x_0,m}$.
We write
\begin{equation*}
\begin{split}
T([X + Y + \bb_1])
&= \dfrac{1}{q^2} \cdot \sum_{\bar{t} \in \Tsp_m / \Tsp_{m^+}} T([{t}(X + Y)t\inv +  \bb_1])\\
&=\dfrac{1}{q^2}  \cdot \sum_{\bar{\alpha}, \bar{\beta} \in \mcO/\cal{P}}  T([X + Y  + \pmat{0}{0} {\varpi u(\alpha - \beta)}{0} + \bb_1])\\
&= \dfrac{1}{q} \cdot T([X + Y +  \mathfrak{k}_1]).
\end{split}
\end{equation*}
Note that $X + Y +  \mathfrak{k}_1 \subset \varpi^{-m} \mathfrak{k}_1 = \gg_{x_0,s^+}$.
Thus,
we have expressed $T$ evaluated at $f = [X + Y  + \bb_1]$ in terms of $T$ evaluated at $f' = \dfrac{1}{q} \cdot [X + Y +  \mathfrak{k}_1]$ where $f' \in D_{1/2^+}$ has support closer to the origin with respect to the $x_0$ filtration than $f$ had with respect to the $y$ filtration.

We now examine the $ \gg_{x,1/2^+} = \mathfrak{k}_{1}$ case.  We may suppose that $s = -m$ for some $m  \in \Z_{\geq 0}$, so that  $X \in \NN \cap (\gg_{x_0, -m} \smallsetminus \gg_{x_0,(-m)^+})$ and $Y \in \gg_{x_0,(-m)^+}$.
Since $T$ is $G$-invariant, after conjugating by $\mathfrak{k}_0^\times$ we may assume that $X$ is
$$\pmat{0}{\varpi^{-m}u}{0}{0}$$
with $m \geq 0$ and $u \in \mcO^\times$.
We then have
$$T([X + Y +  \mathfrak{k}_1]) = \sum_{\bar{\alpha} \in \cal{P}/\cal{P}^2} T([ X + Y +  \pmat{0}{0}{\alpha}{0} + \bb_{1}])$$
Note that $X + Y +  \pmat{0}{0}{\alpha}{0} \in \gg_{y,s^+}$.  Thus,
we have expressed $T$ evaluated at $f = [X + Y  + \mathfrak{k}_1]$ in terms of $T$ evaluated at
$$f' = \sum_{\bar{\alpha} \in \cal{P}/\cal{P}^2} [ X + Y +  \pmat{0}{0}{\alpha}{0} + \bb_{1}]$$
 where $f' \in D_{0^+}$ has support closer to the origin with respect to the $y$ filtration than $f$ had with respect to the $x_0$ filtration.

To summarize, the point of {descent and recovery} is as follows.  We begin with a simple function $f \in C((\gg_{x,s}\smallsetminus \gg_{x,s^+})/\gg_{x,1/2^+})$ for some $x \in \BB$.  From this function, we find a point $y \in \BB$ and a function $f' \in C(\gg_{y,s^+}/\gg_{y,1/2^+})$ so that $T(f) = T(f')$.  After a finite number of steps, we will have shown that $T(f)$
is completely determined by $\res_{C(\bb_{1/2}/\bb_{1})}T$.

\subsubsection*{Counting}

From~\cite{hc:admissible} we know that the dimension of the complex vector space $\res_{D_{0^+}} J(\NN)$ is equal to  the number of nilpotent orbits.
Since $J(\NN) \subset \tilde{J}_{1/2^+}$, we have
$$2 = \dim_{\C}  \res_{D_{0^+}}J(\NN) \leq \dim_\C \res_{D_{0^+}} \tilde{J}_{1/2^+}.$$
 From our work above we have
 $$\dim_{\C} \res_{D_{0^+}}\tilde{J}_{1/2^+} = \dim_{\C}
\res_{C(\bb_{1/2}/\bb_{1})}\tilde{J}_{1/2^+}.$$
Consequently, we need only show that
 $\dim_{\C}  \res_{C(\bb_{1/2}/\bb_{1})}\tilde{J}_{1/2^+} \leq 2$.
Since $\gg_{1/2^+} \subset \gg_{x,1/2^+} + \NN$ for any $x \in \BB$, we have that for $T \in \tilde{J}_{1/2^+}$  the restriction of $T$  to $C(\bb_{1/2}/\bb_{1})$ is completely determined by
$$T([ \pmat{0}{1}{0}{0} + \bb_1])  \, \, \, \text{ and }
\, \, \,   T([\bb_1]).$$

\enlargethispage{\baselineskip}

\end{document}